\newtheorem{theorem}{Theorem}
\newtheorem{lemma}{Lemma}
\newtheorem{proposition}{Proposition}
\newtheorem{example}{Example}
\newtheorem*{theorem*}{Theorem}
\newcommand{\CD}{\mathcal{CD}}
\newcommand{\supply}{\mathfrak{s}}
\newcommand{\demand}{\mathfrak{d}}
\newtheorem*{conjecture}{Conjecture}
\newcommand{\R}{\mathbb{R}}
\begin{document}
\title{The Hierarchy of Circuit Diameters and Transportation Polytopes}

\author{S. Borgwardt, J. A. De Loera, E. Finhold, J. Miller}

\maketitle

\begin{abstract}
The study of the diameter of the graph of polyhedra is a classical problem in the theory of linear programming. While transportation polytopes are at the core of operations research and statistics it is still open whether the Hirsch conjecture is true for general $m{\times}n$--transportation polytopes.  In earlier work the first three authors introduced a  hierarchy of variations to the notion of graph diameter in polyhedra. The key reason was that  this hierarchy provides some interesting lower bounds for the usual graph diameter. 

This paper has three contributions: First, we  compare the hierarchy of diameters for the $m{\times}n$--transportation polytopes. We show that the Hirsch conjecture bound of $m+n-1$ is actually valid in most of these diameter notions. Second, we prove that for $3{\times}n$--transportation polytopes the Hirsch conjecture holds in the classical graph diameter. Third, we show for $2{\times}n$--transportation polytopes that the stronger monotone Hirsch conjecture holds and improve earlier bounds on the graph diameter.
\end{abstract}

\section{Introduction}

Transportation problems are among the oldest and most fundamental problems in mathematical programming, operations research, and statistics \cite{dk-13,dkos-09,h-41,h-63,kw-68,kyk-84}. An $m{\times}n$--transportation problem has $m$ supply points and $n$ demand points to be met. Each supply point holds a quantity $u_i>0$ and each demand point needs a quantity $v_j>0$. If $y_{ij}\geq 0$ decribes the flow from the supply point $i$ to the demand point $j$, then the set of feasible flow assignments, $y\in\mathbb{R}^{m\times n}$, can be described by
$$
\begin{array}{lcrclcl}
 &  & \sum\limits_{j=1}^{n}  y_{ij}    & =& u_i & \quad & i=1,...,m,\\
                         &      & \sum\limits_{i=1}^m y_{ij} & =    &  v_j
                         & \quad & j=1,...,n,\\
                         &      &          y_{ij}
                         & \geq  & 0                   & \quad &  i=1,...,m,\text{ }j=1,...,n.\\
\end{array}
$$ The set of solutions to these constraints constitutes a {\em transportation polytope}. Here the vectors $u$ and $v$ are called the \emph{marginals} or \emph{margins} for the transportation polytope, and a point $y$ inside the polytope is called a \emph{feasible flow assignment}. 

The standard transportation problem requires the optimization of a linear objective function over this set. A common way to solve these problems is the simplex algorithm \cite{d-63}. In the context of a worst-case performance of the simplex method, the study of the {\em graph diameter} (or combinatorial diameter) of polyhedra is a classical field in the theory of linear programming. This is the diameter of its underlying \emph{$1$-skeleton}. Hence the {\em graph distance} between two vertices ($0$-faces) in $P$ is the minimum number of edges ($1$-faces) needed to go from one vertex to the other, and the graph diameter of $P$ is the maximum distance between its vertices. The connection to the simplex algorithm becomes even more direct when investigating the ``monotone diameter". Here the diameter is realized by a monotone path on the same graph for a given linear functional. This monotone path is an edge-walk visiting vertices whose objective function values are non-decreasing with respect to the functional.

In 1957, W. Hirsch stated his famous conjecture (e.g., \cite{d-63}) claiming that the diameter of a polytope is at most $f-d$, where $d$ is its dimension and $f$ its number of facets. In his recent celebrated work, Santos finally gave a counterexample for general polytopes \cite{s-11}, but  the Hirsch conjecture is true for some classes of polytopes. A survey is found in \cite{ks-10}. In particular, it holds for dual transportation polyhedra \cite{b-84} and for $0,1$-polytopes \cite{n-89}. For the latter, the even stronger monotone Hirsch conjecture (or monotonic bounded Hirsch conjecture), asking whether $f-d$ is a bound on the monotone diameter, was shown to be true \cite{mt-93}.
But it is still open whether the Hirsch conjecture holds for $m{\times}n$--transportation polytopes despite a long line of research which we outline next.

For $m{\times}n$--transportation polytopes, the Hirsch conjecture  states an upper bound of $m+n-1$. This bound holds for $m=2$ \cite{dk-13} and a much lower bound than the one claimed in the Hirsch conjecture holds for the so-called partition polytopes, a special class of $0,1$-transportation polytopes \cite{b-11}. This is a generalization of the well-known fact that the assignment polytope has diameter $2$ for $n\geq 4$ \cite{br-74}. For $m\geq 3$, the best known  bounds for $m{\times}n$--transportation polytopes are in fact linear: a bound of $8(m+n-2)$ is presented in \cite{bhs-06}, and this is improved to $4(m+n-1)$ (it remains unpublished but a sketch of the proof is shown in \cite{dk-13}). 

In an attempt to understand the behavior of the graph diameter we introduced a hierarchy of distances and diameters for polyhedra that extend the usual edge walk \cite{bfh-14,blf-14}: Instead of only going along actual edges of the polyhedron, we walk along \emph{circuits}, which are all potential edge directions of the polyhedron. This means in particular that we can possibly enter the interior of the polyhedron.

The transportation polytopes are of the form $P=\{\,z\in \R^d:Az=b,\,z\geq 0\,\}$ for an integral matrix $A$. 
Then the \emph{circuits} or \emph{elementary vectors} associated with $A$ are those vectors  $g\in \ker(A) \setminus \{\,0\,\}$, for which $g$ is support-minimal in the set $\ker(A)\backslash \{0\}$, where $g$ is normalized to (coprime) integer components. These are exactly all the edge directions of $\{\,z\in \R^n:Az=b,z\geq 0\,\}$ that appear when letting $b$ vary. For vertices $v^{(1)},v^{(2)}$ of $P$, we call a sequence $v^{(1)}=y^{(0)},\ldots,y^{(k)}=v^{(2)}$ a \emph{circuit walk of length $k$}, if for all $i=0,\ldots,k-1$ we have $y^{(i+1)}-y^{(i)}=\alpha_ig^{i}$ for some  circuit $g^{i}$ and some $\alpha_i>0$. The \emph{circuit distance} from $v^{(1)}$ to $v^{(2)}$ is the minimum length of a circuit walk from $v^{(1)}$ to $v^{(2)}$. The \emph{circuit diameter} of $P$ is the maximum circuit distance between any two vertices of $P$. 
In the following we prove lower and upper bounds on the circuit diameters of transportation polytopes using various notions of diameters. We here look at different notions of circuit diameter that arise by putting further restrictions on the circuit walks as introduced in \cite{blf-14}; we consider four main types of circuit distance: 
\begin{itemize}
	\item $\CD$ (the circuit walks do not have to satisfy any additional properties), 
	\item $\CD_f$ (all points in the circuit walk have to be feasible points in the polyhedron),
	\item $\CD_{fm}$ (maximal feasible steps are applied, that is, for all $i$, $y^{(i)}+\alpha_i g^{i}\in P$, but $y^{(i)}+\alpha g^{i}\notin P$ for all $\alpha>\alpha_i$; this is the circuit distance introduced in \cite{bfh-14}) ,
	\item $\CD_{e}$ (the circuit steps go along the edges from vertex to vertex; this distance corresponds to the graph diameter and was denoted $\CD_{efm}$ in \cite{blf-14}).
\end{itemize}
These four ways to measure the distance between vertices form the `central chain' in the larger hierarchy of distances shown in \cite{blf-14}. Note that they satisfy the relation 
	\[
			\CD_e\geq \CD_{fm}\geq \CD_f\geq \CD\; ,
	\]
by the simple fact that the definitions become less restrictive. 

We are interested in the diameters of transportation polytopes with respect to these notions of distances. The subscript we use in $\CD_*$ changes according with the distance being employed. For simplicity of notation, we use $\CD_*$ with slightly different meanings, which are clear in the context of the presentation. It can mean
\begin{itemize}
\item the diameter of a specific transportation polytope, 
\item the maximal diameter of any polytope in a given family of transportation polytopes, 
\item or the actual distance between two specific vertices within a polytope.
\end{itemize}

Hereby, the above hierarchy directly translates to a hierarchy of diameters for a given polytope, as well as a hierarchy of maximal diameters in a family of transportation polytopes.

Our first results in this paper are lower and upper bounds on the `bottom' part of this hierarchy of diameters.



\begin{theorem}\label{thm:generalTP}
For all $m,n$, there exist $m{\times}n$-transportation polytopes with $\CD \geq \min\{(m-1)(n-1),m+n-1\}.$
Since for all $m{\times}n$--transportation polytopes $\CD_f$ is bounded above by $m+n-1$, the Hirsch bound holds with respect to $\CD_f$ for all transportation polytopes. Further, the Hirsch bound is attained in at least one example for all $m\geq 3, n\geq 4$ with respect to $\CD$.
\end{theorem}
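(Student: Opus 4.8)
The plan is to handle the three claims in turn, after restating $\CD$ combinatorially. Recall that the circuits of an $m{\times}n$--transportation polytope are exactly the signed cycles of the complete bipartite graph $K_{m,n}$ (alternating $+1$ and $-1$ along an even cycle, both orientations allowed). Since a $\CD$--walk puts no feasibility restriction on the intermediate points, a walk of length $k$ from $v^{(1)}$ to $v^{(2)}$ is the same thing as a decomposition $v^{(2)}-v^{(1)}=\sum_{t=1}^{k}\alpha_t g^t$ with $\alpha_t>0$ and each $g^t$ a circuit --- the partial sums automatically satisfy $Az=b$ --- so $\CD(v^{(1)},v^{(2)})$ is precisely the minimum number of circuits in a positive decomposition of $w:=v^{(2)}-v^{(1)}$. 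I would first record two cheap universal upper bounds: $\CD\le\CD_f$, and, by the theorem on conformal decompositions into elementary vectors applied to $w$ inside the span of the edges of $\mathrm{supp}(w)$, a conformal decomposition of $w$ into at most $\dim\ker A=(m-1)(n-1)$ circuits, so that $\CD\le(m-1)(n-1)$ as well. Thus, once the $\CD_f$--bound below is in hand, $\CD\le\min\{(m-1)(n-1),m+n-1\}$ for every transportation polytope, and the content of the theorem is that this bound is attained.

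For $\CD_f\le m+n-1$ my plan is: \textbf{(i)} for vertices $v^{(1)},v^{(2)}$, note that $\mathrm{supp}(v^{(i)})$ is a spanning forest of $K_{m,n}$ with $m+n-c_i$ edges ($c_i$ its number of components), so a short count bounds the cycle rank of $T:=\mathrm{supp}(v^{(1)})\cup\mathrm{supp}(v^{(2)})$ by $m+n-\max\{c_1,c_2\}\le m+n-1$; \textbf{(ii)} take a \emph{conformal} circuit decomposition $w=v^{(1)}-v^{(2)}=\sum_{t=1}^{k}\delta_t h^t$ with $\delta_t>0$; since each $h^t$ is conformal with $w$ we have $\mathrm{supp}(h^t)\subseteq\mathrm{supp}(w)\subseteq T$, so $k$ is at most the cycle rank of $T$, hence $k\le m+n-1$; \textbf{(iii)} verify that this conformal decomposition is a \emph{feasible} walk --- for every coordinate $e$, conformality together with $v^{(1)},v^{(2)}\ge 0$ pins each partial sum $y^{(s)}_e$ between $v^{(1)}_e$ and $v^{(2)}_e$, so $y^{(s)}\ge 0$. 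This yields $\CD_f(v^{(1)},v^{(2)})\le m+n-1$, and with $\CD\le\CD_f$ the Hirsch bound holds for $\CD_f$ (hence for $\CD$) on all transportation polytopes.

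For the extremal examples I would generalize the two--row case. When $m=2$, in the reduced coordinates $x_j=y_{1j}$ the polytope is $\{0\le x_j\le v_j,\ \sum_j x_j=u_1\}$, the circuits are the vectors $\pm(e_i-e_j)$, and in any positive decomposition each positive coordinate of $w$ must be the unique ``$+$''--coordinate of a distinct circuit, so $\CD=\max\{p,q\}$ with $p,q$ the numbers of positive and negative entries of $w$; taking $v=(1,\dots,1,n)$, $u_1$ just below $n$, and the vertices ``all mass in column $n$'' versus ``mass spread over columns $1,\dots,n-1$'' gives $p=n-1$ and $\CD=n-1=(m-1)(n-1)$ (symmetrically for $n=2$, and $3{\times}3$ handled by an analogous explicit pair). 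For general $m,n$ I would use the same ``concentrate versus spread'' principle: pick $v^{(1)}$ supported on a spanning tree $T_1$ and $v^{(2)}$ on a spanning tree $T_2$ meeting $T_1$ in as few edges as possible --- edge--disjoint exactly when $(m-2)(n-2)\ge 2$, which is precisely the regime where $m+n-1\le(m-1)(n-1)$ --- with the nonzero entries chosen in an extreme regime (one edge carrying almost all the mass). The target statement is: in any positive circuit decomposition of $w=v^{(2)}-v^{(1)}$, each circuit can be charged to reconciling at least one still-wrong edge of $T_1$, and since $T_1$ is a tree no circuit is supported inside it; counting independent cycles in $T_1\cup T_2$ then forces at least $m+n-1$ circuits, i.e.\ $\CD\ge m+n-1$ (and $\ge(m-1)(n-1)$ in the small cases). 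Combined with the universal upper bound from the first step, such examples satisfy $\CD=\min\{(m-1)(n-1),m+n-1\}$; in particular for $m\ge 3,n\ge 4$ one has $(m-2)(n-2)\ge 2$, so the minimum is $m+n-1$, the example has $\CD=m+n-1\le\CD_f\le m+n-1$, and the Hirsch bound is attained with respect to $\CD$.

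The hard part will be the general lower bound: I must make ``$w$ needs many circuits'' robust against decompositions that exploit cancellation --- exactly the phenomenon separating $\CD$ from $\CD_f$ --- and against circuits using edges outside $T_1\cup T_2$; neither is excluded by a naive dimension count. The intended remedy is to push the marginals into the extreme ``one dominant edge'' regime already used for $m=2$, so that any cancellation or detour is demonstrably wasteful, while simultaneously certifying that $v^{(1)}$ and $v^{(2)}$ remain genuine vertices with the prescribed tree supports --- this combination is the technical core I expect to be the real obstacle.
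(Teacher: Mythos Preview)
Your treatment of the upper bound $\CD_f\le m+n-1$ is correct and is in fact a self-contained version of what the paper obtains by citing an external result: a conformal (sign-compatible) decomposition of $w=v^{(2)}-v^{(1)}$ into circuits supported in $T=\mathrm{supp}(v^{(1)})\cup\mathrm{supp}(v^{(2)})$, together with the cycle-rank bound for $T$, gives at most $m+n-1$ steps, and conformality forces each partial sum to lie coordinatewise between $v^{(1)}$ and $v^{(2)}$, hence to be feasible. This matches the paper's route via $\CD_s$.

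The genuine gap is in your lower bound. Your $m=2$ example is fine, but the justification ``each positive coordinate of $w$ must be the `$+$'--coordinate of a \emph{distinct} circuit'' is not robust against cancellation; the correct argument is that the (undirected) support of the circuits used must connect all nodes in $\mathrm{supp}(w)$, which for $w=(1,\dots,1,-(n{-}1))$ forces a connected graph on $n$ nodes and hence $\ge n-1$ edges. For general $m,n$, your ``charging / one dominant edge'' plan is where the proposal stops being a proof: nothing prevents a $\CD$--decomposition from using circuits through edges outside $T_1\cup T_2$ or from exploiting massive cancellation, and a combinatorial charging scheme does not obviously survive either.

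The paper sidesteps exactly this obstacle by a linear-algebraic perturbation argument you are missing. One first writes down $k=\min\{(m-1)(n-1),m+n-1\}$ explicit, pairwise sign-compatible, \emph{linearly independent} circuits $g^1,\dots,g^k$ whose ``decrease'' edges form a forest (giving a vertex $O$) and whose ``increase'' edges form a forest (giving a vertex $F$), so that $y^F-y^O=\sum_i g^i$. Then one perturbs the margins along each $g^i$ by an independent small $\epsilon_i$, which keeps $B(y^O),B(y^F)$ as vertex supports and yields $y'^F-y'^O=\sum_i(1+\epsilon_i)g^i$. Since there are only finitely many $(k{-}1)$-element subsets of circuits of $A$, each spanning a subspace of dimension $\le k-1$, a generic choice of the $\epsilon_i$ places $y'^F-y'^O$ outside every such subspace; hence no decomposition into fewer than $k$ circuits exists, and $\CD\ge k$. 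This argument is indifferent to cancellation and to circuits outside $T_1\cup T_2$, which is precisely the robustness your extremal-marginals idea was trying to manufacture by hand.
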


Essentially, Theorem \ref{thm:generalTP} means that these weaker diameter concepts satisfy the bound imposed by the Hirsch conjecture and that it is tight in the sense that there exist margins for which it is realized. In the literature this is often called {\em  Hirsch-sharp} (see \cite{hk-98,hk-99}). Of course, there also are margins for which the diameter is even lower \cite{br-74,b-11}. Theorem \ref{thm:generalTP} indicates that, one way for disproving the Hirsch conjecture for transportation polytopes is to find a counterexample with respect to the distance notion $\CD_{fm}$.

The later sections of this paper are dedicated to studying how the upper bounds in the hierarchy collapse for transportation polytopes with low number of supply points.  As we will see, the hierarchy collapses fully for the $3{\times}n$--case, and we keep a (tight) gap of one between $\CD_e$ and $\CD_{fm}$ for the class of $2{\times}n$--polytopes.

We prove this by bounding the diameters of $2{\times}n$-- and $3{\times}n$-transportation polytopes. We begin with the discussion of $\CD_{fm}$ for $2{\times}n$-transportation polytopes.

\begin{theorem}\label{thm:CD_fm}
 All $2{\times}n$-transportation polytopes satisfy $\CD_{fm}\leq n-1$.
\end{theorem}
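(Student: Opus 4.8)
The plan is to reduce to a one-row model of the polytope and then induct on $n$, arranging that each maximal circuit step permanently fixes a new column at its target value. A $2{\times}n$--transportation polytope $P$ with supplies $u_1,u_2$ and demands $v_1,\dots,v_n$ is determined by its first row, since $y_{2j}=v_j-y_{1j}$; writing $x_j:=y_{1j}$ identifies $P$ affinely with
\[
	P_x=\Big\{\,x\in\R^{n}\ :\ \textstyle\sum_{j=1}^{n}x_j=u_1,\ \ 0\le x_j\le v_j\ \ (j=1,\dots,n)\,\Big\}.
\]
Under this identification the circuits of $P$ become exactly the vectors $\pm(e_j-e_k)$, $j\ne k$ (the support-minimal solutions of $\sum_j w_j=0$); a maximal feasible step from $x$ along $e_j-e_k$ has step length $\alpha=\min\{v_j-x_j,\;x_k\}$; and, because every $v_j>0$, the vertices of $P_x$ are precisely the points having at most one coordinate strictly between $0$ and its bound $v_j$. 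So it suffices to join any two vertices $x,x'$ of $P_x$ by a $\CD_{fm}$--walk of length at most $n-1$.

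The heart of the matter is a one-step claim: \emph{if $x\ne x'$, then some maximal circuit step brings a coordinate $c$ exactly to its target $x'_c$, and moreover $x'_c\in\{0,v_c\}$.} Granting this, I would induct on $n$, with $n=1$ (a single point) trivial. Perform the step; coordinate $c$ now equals $0$ or $v_c$, so deleting column $c$ and replacing $u_2$ by $u_2-v_c$ (if $x'_c=0$) or $u_1$ by $u_1-v_c$ (if $x'_c=v_c$) produces a $2{\times}(n-1)$--transportation polytope in whose one-row model the restrictions of the current point and of $x'$ are again vertices. By induction there is a $\CD_{fm}$--walk of length $\le(n-1)-1$ between them; it lifts to a $\CD_{fm}$--walk in $P_x$ with column $c$ held fixed at $x'_c$, and prepending our one step yields a walk of length $\le n-1$.

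To prove the one-step claim, let $p$ be the fractional coordinate of $x'$ if it has one (if not, every target already lies in $\{0,v_j\}$ and the first case below applies directly), and put $D^+=\{j:x_j<x'_j\}$ and $D^-=\{k:x_k>x'_k\}$; since $\sum_j(x'_j-x_j)=0$ and $x\ne x'$, both are nonempty, and every index of $D^+\cup D^-$ other than $p$ has target $v_j$ if in $D^+$ and target $0$ if in $D^-$. If both $D^+\setminus\{p\}$ and $D^-\setminus\{p\}$ are nonempty, choose $j\in D^+\setminus\{p\}$, $k\in D^-\setminus\{p\}$ and take the maximal step along $e_j-e_k$: its positive length $\min\{v_j-x_j,x_k\}$ equals $x_k$, driving $x_k$ down to $0=x'_k$, or $v_j-x_j$, driving $x_j$ up to $v_j=x'_j$, so a coordinate lands on a bound value equal to its target. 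Otherwise one of the two sets is empty; say $D^+\setminus\{p\}=\emptyset$, hence $D^+=\{p\}$ and $x_p<x'_p<v_p$ (the case $D^-=\{p\}$ is symmetric, routing flow out of $p$). Pick any $k\in D^-$; then $k\ne p$, so $x'_k=0$, and
\[
	x_k\;=\;x_k-x'_k\;\le\;\sum_{i\in D^-}(x_i-x'_i)\;=\;\sum_{i\in D^+}(x'_i-x_i)\;=\;x'_p-x_p\;<\;v_p-x_p,
\]
so the maximal step along $e_p-e_k$ has length $\min\{v_p-x_p,x_k\}=x_k$; it sends $x_k$ to $0=x'_k$ while leaving $x_p$ below $v_p$, proving the claim.

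I expect this last configuration to be the main obstacle: when the only column still needing an increase is the target's fractional column $p$, a maximal step cannot stop at the interior value $x'_p$, so progress must come instead from \emph{emptying} a source column, and it is exactly the displayed flow-balance count that forces the maximal step to do so. The other ingredients — that in the one-row model the circuits, maximal steps and vertices are as described, that the lifted walk really is a $\CD_{fm}$--walk, and the handling of degenerate vertices — are routine but should be written out with care.
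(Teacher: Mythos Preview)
Your induction step asserts that after deleting column $c$ the restriction of the \emph{current} point is again a vertex of the smaller polytope, and this is false. Take $n=3$, $u_1=4$, $v=(3,2,5)$, starting vertex $x=(3,1,0)$ and target vertex $x'=(0,0,4)$, so $p=3$. Here $D^+=\{3\}=\{p\}$ and $D^-=\{1,2\}$; your Case~2 picks some $k\in D^-$, say $k=1$, and the maximal step along $e_3-e_1$ has length $3$, producing $(0,1,3)$. Deleting column $1$ leaves $(1,3)$ in the polytope $\{x_2+x_3=4,\ 0\le x_2\le2,\ 0\le x_3\le5\}$, which is the midpoint of the edge between its only two vertices $(0,4)$ and $(2,2)$. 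Your induction hypothesis, stated for pairs of vertices, therefore does not apply to the restricted problem.

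The repair is easy and needs no new idea: your one-step claim never uses that $x$ is a vertex, only that $x'$ is. So either strengthen the induction hypothesis to ``from any feasible point to any vertex there is a maximal-step walk of length at most $n-1$'', or drop the induction and iterate the one-step claim directly, observing that once a column is driven to a bound equal to its target it is never touched again, so after at most $n-1$ steps every column other than $p$ is at its target and the remaining one is forced by the sum constraint. Either way the bound $n-1$ stands. For comparison, the paper stays in the bipartite support-graph model and tracks $|O\setminus F|$: each maximal step is chosen to delete an edge of $O\setminus F$ while inserting only edges of $F$, with a separate observation that the final step removes two such edges when $|F|=n$. Your dimension-reduction and the paper's $|O\setminus F|$-reduction are different bookkeeping for the same greedy mechanism; the one-row model is tidier for $m=2$, while the paper's graph language is what it later extends to the $3\times n$ case.
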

We then turn to the classical graph diameters of these polytopes, i.e. $\CD_{e}$. First, we refine the upper bound  $n+1$  on the diameter of $2{\times}n$--transportation polytopes \cite{dk-13} (which is the one implied by the Hirsch conjecture) by one and prove that this bound is realized by a monotone path. 

\begin{theorem}\label{thm:2n}
The monotone Hirsch conjecture holds for $2{\times}n$--transportation polytopes, with a (tight) upper bound of $n$ on the diameter.
\end{theorem}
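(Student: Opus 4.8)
The first step is to pass to the standard parametrization of a $2{\times}n$--transportation polytope: eliminating the second supply row, $P$ is affinely isomorphic to $\{x\in\R^n:\sum_j x_j=u_1,\ 0\le x_j\le v_j\}$, and maximizing a linear functional on $P$ becomes maximizing $\sum_j c_jx_j$ for suitable $c_j$; I would reorder the demand points so that $c_1>c_2>\dots>c_n$ and treat the generic (non-degenerate) case first, handling coincidental equalities of partial sums of margins by a separate limiting/perturbation argument (degenerate $2{\times}n$ polytopes are combinatorial quotients of generic ones and are at least as easy). Each vertex of $P$ then has a \emph{split node} $k$ with $0<x_k<v_k$, a set $S_1=\{j:x_j=v_j\}$ and a set $S_2=\{j:x_j=0\}$ partitioning the rest, and the unique $c$-maximal vertex $v^*$ is the greedy one: $S_1^*=\{1,\dots,r-1\}$, $S_2^*=\{r+1,\dots,n\}$, split node $r$, where $r$ is determined by $\sum_{j<r}v_j\le u_1<\sum_{j\le r}v_j$. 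I would next record the edge structure: the neighbours of a vertex with split node $k$ are obtained by ``reclassifying'' a single non-split node $j$, the move being forced to be one of two types --- either $j$ changes between $S_1$ and $S_2$ while $k$ stays the split, or the split migrates to $j$ and $k$ joins $S_1$ or $S_2$ --- and I would pin down exactly which of these moves are $c$-monotone (roughly: push mass toward the low-index, high-$c$ nodes).

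\textbf{A potential and the monotone path.} Define $\Phi(v)=\#\{j<r:x_j<v_j\}+\#\{j>r:x_j>0\}$. One checks $0\le\Phi(v)\le n-1$ and $\Phi(v)=0$ iff $v=v^*$. The plan is to show that from every $v\ne v^*$ there is a monotone edge step that decreases $\Phi$ by $1$, \emph{except} in a narrow family of configurations where only a monotone step with $\Delta\Phi=0$ is available; in that case the step repositions the split node, and I would argue that at most one such ``neutral'' step is ever forced along the whole path. Concretely I would go through all monotone steps from a vertex with split $k$, splitting according to whether the (forced) amount pushed fills the target coordinate or empties the source coordinate, and according to whether $k<r$, $k=r$, $k>r$: this gives a short table of the possible values of $\Delta\Phi$. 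The key structural input is that several a priori dangerous configurations are actually infeasible --- for instance, ``$S_1\subseteq[1,r-1]$ with $[1,k-1]\subseteq S_1$ and $k<r$'' forces $x_k>v_k$, a contradiction; such computations (comparing $x_k=u_1-\sum_{S_1}v_j$ with the defining inequality $\sum_{j<r}v_j\le u_1$) are exactly what rules out getting stuck, and leave only the configuration ``all of $[r+1,n]$ already emptied, split trapped to the left of $r$ with oversized unfilled nodes'' as the source of the single neutral step.

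\textbf{Assembling the bounds.} Combining a run of at most $n-1$ $\Phi$-decreasing monotone steps with at most one neutral repositioning step yields a monotone edge path of length $\le n$ from any vertex to $v^*$; this gives the monotone Hirsch bound of $n$ for $2{\times}n$--transportation polytopes. For the (non-monotone) graph-diameter bound of $n$ between two arbitrary vertices $v,w$, I would run the same ``reclassify one node at a time toward $w$'' procedure --- now without the monotonicity constraint, so with strictly more freedom --- again touching each of the $n-1$ non-split demand points at most once and incurring at most one split-repositioning step, for a total of $\le n$. (Alternatively one can feed the bound $\CD_{fm}\le n-1$ of Theorem~\ref{thm:CD_fm} in and show that a maximal-feasible circuit walk of length $n-1$ can be realigned to an edge walk at the cost of a single extra step; either route gives $\CD_e\le n$.)

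\textbf{Tightness.} Finally I would exhibit a family of $2{\times}n$--transportation polytopes together with a linear functional whose monotone distance from some vertex to the optimum is exactly $n$: for $n=3$ a generic hexagonal transportation polytope already has graph (and monotone) diameter $3$, and for general $n$ one chooses margins forcing, on the way from a suitably ``anti-sorted'' vertex to $v^*$, every demand node to be reclassified exactly once together with one unavoidable repositioning of the split node; a direct check that no shorter monotone path exists completes the argument. The main obstacle I anticipate is the bookkeeping in the second paragraph --- proving that the ``neutral'' step occurs at most once. This hinges on understanding how the forced fill-versus-empty branching of each edge step interacts with the greedy prefix structure of $v^*$, and it is precisely here that the feasibility computations excluding the bad configurations do the real work; it is also what separates the sharp bound $n$ from the naive $n-1$ one would get if $\Phi$ always dropped.
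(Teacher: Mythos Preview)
Your approach is genuinely different from the paper's and, at the level of strategy, is a reasonable alternative. The paper never passes to the reduced parametrization $\{x:\sum_j x_j=u_1,\ 0\le x_j\le v_j\}$ or introduces a potential $\Phi$; instead it works entirely in the bipartite support-graph picture and builds a \emph{marking system}: edges of the target tree $F$ are marked one at a time (at most one pivot per marking, marked edges never deleted), and a short case analysis (their Lemma~\ref{marklem}) shows which pivot to use from each configuration of marked mixed edges. The diameter bound $n$ then comes from observing that at least one of the $n{+}1$ edges of $F$ can be marked without a pivot (their Lemma~\ref{thm2n}), and monotonicity is checked separately by computing reduced costs for the chosen pivots in the two cases $D_m^O=D_m^F$ and $D_m^O\neq D_m^F$ (their Lemmas~\ref{lem:fracnondecreasing}--\ref{lem:intnondecreasing}). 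Your potential $\Phi$ plays the role of the complement of the marking count, and your single ``neutral'' step corresponds exactly to the paper's observation that when $E_m^O=E_m^F$ one mixed edge in $O\cap F$ may be deleted once before it is protected. What the paper's route buys is that the marking rules are stated once and reused verbatim for the $3{\times}n$ case; what your route buys is a cleaner picture of the polytope and a single scalar measure of progress.

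That said, the crux of your argument --- that the neutral $\Delta\Phi=0$ step is forced \emph{at most once} along the whole path --- is asserted but not actually established, and you flag it yourself as the main obstacle. In the paper this is precisely the content of the second half of the proof of Lemma~\ref{thm2n} together with Lemma~\ref{2nmark}: once one of the two shared mixed edges is deleted, the other becomes a leaf and is provably never touched again by the pivot rule. In your language you would need an analogous statement: after a neutral step the split node is repositioned in such a way that every subsequent monotone step available under your rule strictly decreases $\Phi$. Your ``feasibility computations'' hint at the right inequalities, but as written the case table is not carried out, and without it the bound you obtain is only $n{+}1$ (which is the old Hirsch bound, not the improved one). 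This is a fillable gap rather than a wrong approach, but it is the whole point of the theorem.
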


At first glance, the bound of $n$ looks like a minor refinement, but there are three important differences here:  First, we are not Hirsch-sharp anymore. Second, our new approach allows us to prove the stronger monotone Hirsch conjecture. And third, the bound above is tight in the sense that there exist $2{\times n}$-transportation polytopes with margins such that $\CD_e=n$. For these polytopes we have in particular $\CD_{fm}\leq n-1<n=\CD_e$.

Further, our approach serves as an introduction to a `marking system' before we continue on to the more involved $3{\times}n$ case. It is a key ingredient in showing the following. 

\begin{theorem}\label{thm:3n}
The Hirsch conjecture holds for $3{\times}n$--transportation polytopes. In particular, they have graph diameter $\leq n+2$.
\end{theorem}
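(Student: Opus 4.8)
\medskip
\noindent\textbf{Proof plan.} For a $3{\times}n$--transportation polytope the dimension is $2(n-1)$ and there are at most $3n$ facets, so the Hirsch bound is $m+n-1=n+2$; thus it suffices to produce, between any two vertices, an edge-walk of length at most $n+2$. I would work throughout with the standard combinatorial model: a vertex is a basic feasible solution whose support is a forest in $K_{3,n}$, a non-degenerate vertex has support a spanning tree with exactly $n+2$ edges, and an edge of the polytope corresponds to a pivot --- add one edge to the current support tree, creating a unique cycle, and push flow around that cycle until an edge drops to zero. Since every cycle of $K_{3,n}$ meets either two or all three supply nodes, a single pivot changes the flow in exactly two or three demand columns. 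The first preliminary step is a short census of vertex supports: as the $n$ demand degrees are positive and sum to $n+2$, a spanning-tree support has either exactly one demand column of degree $3$ (all other columns being leaves attached to the three supplies), or exactly two demand columns of degree $2$, in which case a connectivity argument forces the ``core'' obtained by deleting the leaf columns to be a path $s_a-c_1-s_b-c_2-s_c$ on the three supplies and the two special columns. Degenerate vertices I would handle by the usual device of completing the support forest to a spanning tree carrying zero flow on the new edges, and checking that the walk we build uses only genuine edges of the polytope.

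\medskip
The core of the argument is the \emph{marking system} introduced for the $2{\times}n$ case in the proof of Theorem~\ref{thm:2n}, now run on three supplies. Fix a source vertex $v$ and a target vertex $w$ with support trees $T_v,T_w$. Order the demand columns so that the (at most two) special, higher-degree columns of $T_w$ come first and its leaf columns come last. Processing columns in this order, maintain a set $M$ of \emph{marked} columns with the invariant that the current vertex agrees with $w$ on every column of $M$ and the edges of the current support incident to $M$ are exactly those of $T_w$. The engine is a one-pivot \emph{marking lemma}: if $j$ is an unmarked leaf column of $T_w$ attached to supply $s$, then there is an edge of the polytope --- a pivot whose cycle stays inside the unmarked part of the current tree --- after which the flow of column $j$ is consolidated onto $s$, so that $j$ joins $M$ and no previously marked column is disturbed. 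The special columns are more delicate because several columns' flows are coupled through the core path; here one first spends a bounded number of pivots to bring the current core into agreement with the core of $T_w$, after which the special columns can be marked. Counting: the special columns cost $O(1)$ pivots, each of the remaining $\approx n$ leaf columns costs one pivot, and the final one or two leaf columns come for free, because once all other columns are marked the support tree is forced to equal $T_w$. Choosing the order carefully and balancing these constants is what makes the total come out to exactly $n+2$.

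\medskip
The step I expect to be the main obstacle is precisely this tight accounting. Three points need care. First, proving the marking lemma for leaf columns requires choosing the entering edge so the induced cycle avoids every marked column; one must argue such a choice always exists given the tree structure, and that the resulting step is an edge (not merely a chord) of the polytope. Second, the special columns genuinely couple several columns, and a naive reconciliation of the two cores --- which may involve different supplies playing the role of the ``middle'' in $v$ and in $w$ --- spends too many pivots; getting this down to a constant small enough that the grand total is $n+2$, rather than $n+c$ for a larger $c$, is the crux. Third, degeneracy makes single pivots ambiguous (several edges may leave simultaneously), so at each step one must verify that some edge of the polytope realizes the intended marking; I would also dispatch the small cases $n\le 3$ directly. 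If the constant proves stubborn, an alternative is to recast the marking system as an induction on $n$: peel off a column that is a common leaf of $T_v$ and $T_w$ attached to the same supply --- creating one, if necessary, with a single pivot --- and recurse on a $3{\times}(n-1)$ instance, so that each peel costs at most one pivot and the induction closes at $n+2=(n-1)+2+1$.
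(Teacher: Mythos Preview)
Your high-level strategy---a marking system tracking progress toward the target support tree, with a one-pivot lemma doing the work---matches the paper's. But you mark \emph{columns} (demand nodes) while the paper marks \emph{edges} of the target tree $F$, and this difference is exactly where your accounting becomes loose while the paper's becomes automatic.

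In the paper, the invariant is that marked edges are never deleted; an edge may be marked only if either its demand is a leaf in $F$, or its demand is mixed in $F$ and the incident supply already has all its leaf edges marked. Since $|F|=n+2$, once you show ``after at most one pivot another edge can be marked, without deleting any marked edge,'' the bound $n+2$ is immediate---no balancing of constants required. The entire weight of the proof is a case analysis on the configuration of the (at most four) mixed edges of the current assignment and which of them are marked; the paper's Lemma~\ref{marklem} handles most configurations uniformly (insert an edge at a supply $\supply_i$ all of whose marked mixed edges sit at even distance in $E_m^O$), and a handful of exceptional configurations (their Cases~2b, 3a, 3b) are dispatched by hand, with one configuration (2c) shown never to arise.

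Your column-based scheme front-loads the difficulty into the accounting you flag as the crux: $n$ columns, special ones costing ``$O(1)$,'' leaves costing one each, and a hoped-for windfall at the end. You correctly identify that reconciling the cores is the delicate step, but you do not resolve it, and your alternative induction (peel a common leaf, creating one with a single pivot if needed) hides the same issue---showing that one pivot always suffices to produce a common leaf is essentially the same case analysis. So the gap is real: not that your plan is wrong, but that the one step you name as the obstacle is the whole proof, and the paper closes it by switching the bookkeeping from columns to edges so that $n+2$ falls out for free and the work becomes a finite configuration check rather than a constant-chasing argument.
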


Throughout the paper, we will reveal some concepts that hold for general $m\times n$-transportation polytopes and we hope will prove helpful in investigating the diameters of transportation polytopes for larger $m$.

The paper is structured as follows: In Section $2$, we recall the necessary background on transportation polytopes and present our notation and tools for the discussion. In Section $3$, we turn to the hierarchy of diameters of general transportation polytopes and prove that and how its `lower parts' collapse (Theorem \ref{thm:generalTP}).  Section $4$ is dedicated to bounding the circuit diameter $\CD_{fm}$ for $2{\times}n$-transportation polytopes (Theorem \ref{thm:CD_fm}). In Section $5$, we then turn to the graph diameter of $2{\times}n$-- and $3{\times}n$--transportation polytopes. We begin by discussing a  marking system for the basic variables of vertices on an edge walk. This marking system is the key ingredient in showing that in the $2{\times}n$--case we get a bound of $n$ and that a corresponding edge-walk is a monotone path (Theorem \ref{thm:2n}). Finally, we also use it to prove validity of the Hirsch conjecture for $3{\times}n$--transportation polytopes (Theorem \ref{thm:3n}).

\section{Preliminaries}

When discussing an $m{\times}n$--transportation problem, it is common practice to think of the supply and demand points as nodes in the complete bipartite graph $K_{m,n}$. We denote the nodes corresponding to the supply points $\{\supply_1,\ldots,\supply_m\}$ and the nodes corresponding to the demand points $\{\demand_1,\ldots,\demand_n\}$. For every feasible solution $y\in \mathbb{R}^{m\times n}$ we define the \emph{support graph} $B(y)$ as the subgraph of $K_{m,n}$ with edges $\left\{\{\supply_i,\demand_j\}\ :\  y_{ij}>0,i\in [m], j\in [n] \right\}$ of non-zero flow. We use this representation throughout the paper to visualize our methods. 

For general transportation polytopes this graph is not necessarily connected, but it is the case for non-degenerate transportation polytopes: An $m{\times}n$ transportation polytope is \emph{non-degenerate} if every vertex has exactly $m+n-1$ non-negative entries. This is the case if and only if there are no non-empty proper subsets $M\subsetneq [m]$ and $N\subsetneq [n]$ such that $\sum_{i\in M} u_i=\sum_{j\in N} v_j$, see \cite{kyk-84}.
Note that for each degenerate $m{\times}n$--transportation polytope, there is a non-degenerate $m{\times}n$--transportation polytope of the same or larger graph diameter $\CD_e$ \cite{kyk-84}. Therefore, it suffices to consider non-degenerate transportation polytopes to prove upper bounds on $\CD_e$. We exploit this in Section $4$. 

In contrast  we cannot assume non-degeneracy when exhibiting other notions of circuit distance, as it is not clear whether for every degenerate $m{\times}n$--transportation polytope there is a perturbed non-degenerate $m{\times}n$--transportation polytope bounding the respective circuit diameters of the original one above. 

When studying circuit distances, the vertices of the polytope are of special interest. They can be characterized by their support graphs: A feasible point $y$ is a vertex if and only if its support graph contains no cycles, that is, $B(y)$ is a spanning forest. In particular the vertices $y$ of non-degenerate transportation polytopes are given by spanning trees (see for example \cite{kw-68}), i.e{.} a vertex $y$ is uniquely determined by (the edge set of) its support graph $B(y)$.

Even though $B(y)$ is directly derived from $y$ (for all $y$, not only vertices), throughout this paper we use the term \emph{assignment} to refer to the tuple $(y,B(y))$ -- for our purposes, an assignment is two things at a time:
\begin{itemize}
\item a vector $y\in \mathbb{R}^{m\times n}$ and
\item a set of edges in $K_{m,n}$ inducing a support graph.
\end{itemize}
We typically use the capital letters $O$ (for `original'), $C$ (for `current'), and $F$ (for `final') to refer to such assignments. In the above sense, assignments can be vertices or lie in the interior of the polytope, we can count their number of edges, e.g{.} $|O|$, and so on. When we have to refer explicitly to the (actual) flow assignment corresponding to $O$, we do so by $y^O=(y_{11}^O,\dots,y_{mn}^O)$. 





For an assignment $O$ we distinguish two kinds of demand nodes:
\begin{enumerate}
	\item \emph{leaf demands} are those demand points which are leaves in $B(y^O)$. When we say \emph{leaf edges} we refer to edges incident to leaf demands. (This differs from the standard notion of leaf edges used in graph theory)
	\item \emph{mixed demands} are those demand points which have degree at least two in $B(y^O)$. We denote by $D_m^O$ the set of mixed demands and $E_m^O:=\left\{\{\supply_i,\demand_j\}: \demand_j\in D_m^O\right\}$, the \emph{mixed edges}.
\end{enumerate}

Note that for a vertex $O$ of a non-degenerate $2{\times}n$--transportation polytope we always have $|D_m^O|=1$ and $|E_m^O|=2$, for the non-degenerate $3{\times}n$--case either $|D_m^O|=1$ and $|E_m^O|=3$ or $|D_m^O|=2$ and $|E_m^O|=4$, as illustrated in the assignments in Figure \ref{fig:mixeddemands}; mixed edges are bold. Here the sets $D_m^O$ of mixed demands are  $\{\demand_4\}$, $\{\demand_5\}$  and $\{\demand_4,\demand_6\}$, respectively.
\begin{figure}[H]
	\center
			\begin{tikzpicture}[scale=0.4]
			
					\coordinate (c1) at (0,5);
					\coordinate (c2) at (0,3);
					\coordinate (x0) at	(5,7);	
					\coordinate (x1) at	(5,6);	
					\coordinate (x2) at (5,5);							
					\coordinate (x3) at (5,4);
					\coordinate (x4) at (5,3);
					\coordinate (x5) at (5,2);
					\coordinate (x6) at (5,1);
 					\draw [fill, black] (c1) circle (0.1cm);
					\draw [fill, black] (c2) circle (0.1cm);
					\draw [fill, black] (x0) circle (0.1cm);
					\draw [fill, black] (x1) circle (0.1cm);
					\draw [fill, black] (x2) circle (0.1cm);
					\draw [fill, black] (x3) circle (0.1cm);
					\draw [fill, black] (x4) circle (0.1cm);
					\draw [fill, black] (x5) circle (0.1cm);
					\draw [fill, black] (x6) circle (0.1cm);
					\node[left] at (c1) {$\supply_1$};
					\node[left] at (c2) {$\supply_2$};
					\node[right] at (x0) {$\demand_1$};
					\node[right] at (x1) {$\demand_2$};
					\node[right] at (x2) {$\demand_3$};
					\node[right] at (x3) {$\demand_4$};
					\node[right] at (x4) {$\demand_5$};
					\node[right] at (x5) {$\demand_6$};
					\node[right] at (x6) {$\demand_7$};
					\draw (c1)--(x1);
					\draw[very thick] (c1)--(x3);
				  \draw[very thick] (c2)--(x3);
					\draw (c1)--(x2);
					\draw (c1)--(x0);
				  \draw (c2)--(x4);		
				  \draw (c2)--(x5);
				  \draw (c2)--(x6);					  

					\coordinate (c1) at (10,6);
					\coordinate (c2) at (10,4);
					\coordinate (c3) at (10,2);
					\coordinate (x1) at	(15,8);	
					\coordinate (x2) at (15,7);	
					\coordinate (x21) at (15,6);	
					\coordinate (x22) at (15,5);	
					\coordinate (x3) at (15,4);
					\coordinate (x42) at (15,3);
					\coordinate (x4) at (15,2);
					\coordinate (x5) at (15,1);
					\coordinate (x6) at (15,0);
					\draw [fill, black] (c1) circle (0.1cm);
					\draw [fill, black] (c2) circle (0.1cm);
					\draw [fill, black] (c3) circle (0.1cm);
					\draw [fill, black] (x1) circle (0.1cm);
					\draw [fill, black] (x2) circle (0.1cm);
					\draw [fill, black] (x3) circle (0.1cm);
					\draw [fill, black] (x4) circle (0.1cm);
					\draw [fill, black] (x5) circle (0.1cm);
					\draw [fill, black] (x6) circle (0.1cm);
					\draw [fill, black] (x21) circle (0.1cm);
					\draw [fill, black] (x22) circle (0.1cm);
					\draw [fill, black] (x42) circle (0.1cm);
					\node[left] at (c1) {$\supply_1$};
					\node[left] at (c2) {$\supply_2$};
					\node[left] at (c3) {$\supply_3$};
					\node[right] at (x1) {$\demand_1$};
					\node[right] at (x2) {$\demand_2$};
					\node[right] at (x21) {$\demand_3$};
					\node[right] at (x22) {$\demand_4$};
					\node[right] at (x3) {$\demand_5$};
					\node[right] at (x42) {$\demand_6$};
					\node[right] at (x4) {$\demand_7$};
					\node[right] at (x5) {$\demand_8$};
					\node[right] at (x6) {$\demand_9$};
					\draw (c1)--(x1);
					\draw[very thick] (c1)--(x3);
				  \draw[very thick] (c2)--(x3);
				  \draw[very thick] (c3)--(x3);
					\draw (c1)--(x2);
					\draw (c2)--(x22);
					\draw (c2)--(x21);
					\draw (c2)--(x42);
				  \draw (c3)--(x4);		
				  \draw (c3)--(x5);
				  \draw (c3)--(x6);
			
					\coordinate (c1) at (20,6);
					\coordinate (c2) at (20,4);
					\coordinate (c3) at (20,2);
					\coordinate (x0) at	(25,8);	
					\coordinate (x1) at	(25,6);	
					\coordinate (x2) at (25,7);							
					\coordinate (x3) at (25,5);
					\coordinate (x32) at (25,4);
					\coordinate (x33) at (25,3);
					\coordinate (x4) at (25,2);
					\coordinate (x5) at (25,1);
					\coordinate (x6) at (25,0);
					\draw [fill, black] (c1) circle (0.1cm);
					\draw [fill, black] (c2) circle (0.1cm);
					\draw [fill, black] (c3) circle (0.1cm);
					\draw [fill, black] (x0) circle (0.1cm);
					\draw [fill, black] (x1) circle (0.1cm);
					\draw [fill, black] (x2) circle (0.1cm);
					\draw [fill, black] (x3) circle (0.1cm);
					\draw [fill, black] (x32) circle (0.1cm);
					\draw [fill, black] (x33) circle (0.1cm);
					\draw [fill, black] (x4) circle (0.1cm);
					\draw [fill, black] (x5) circle (0.1cm);
					\draw [fill, black] (x6) circle (0.1cm);
					\node[left] at (c1) {$\supply_1$};
					\node[left] at (c2) {$\supply_2$};
					\node[left] at (c3) {$\supply_3$};
					\node[right] at (x0) {$\demand_1$};
					\node[right] at (x1) {$\demand_2$};
					\node[right] at (x2) {$\demand_3$};
					\node[right] at (x3) {$\demand_4$};
					\node[right] at (x32) {$\demand_5$};
					\node[right] at (x33) {$\demand_6$};
					\node[right] at (x4) {$\demand_7$};
					\node[right] at (x5) {$\demand_8$};
					\node[right] at (x6) {$\demand_9$};
					\draw (c1)--(x1);
					\draw[very thick] (c1)--(x3);
				  \draw[very thick] (c2)--(x3);
				  \draw[very thick] (c2)--(x33);
				  \draw[very thick] (c3)--(x33);
					\draw (c1)--(x0);
					\draw (c2)--(x32);
					\draw (c2)--(x33);
					\draw (c1)--(x2);		
				  \draw (c3)--(x4);
				  \draw (c3)--(x5);
				  \draw (c3)--(x6);					  				  
			\end{tikzpicture}				  
	\caption{Vertices of non-degenerate $2{\times}n$--transportation polytopes always have exactly one mixed demand point (left), while vertices of non-degenerate $3{\times}n$--transportation polytopes can have either one or two (middle, right).}\label{fig:mixeddemands}
\end{figure}
For every supply point $\supply_i$ we denote the vertices adjacent to $\supply_i$ (its neighbourhood) in an assignment $O$ by 
\[N^O_i:=\left\{\demand_j:y^O_{ij}>0\right\}=\left\{\demand_j:\{\supply_i,\demand_j\}\in O\right\}\ .\] 

We continue with characterizing the \emph{actual} edges in terms of the support graphs:
\begin{proposition}[see \cite{kw-68}]\label{prop2}
Let $O$ and $C$ be two vertices of an $m{\times}n$--transportation polytope. Then they are connected by an edge if and only if $O\cup C$ contains a unique cycle.
\end{proposition}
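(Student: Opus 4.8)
The plan is to realize the (potential) edge joining the two vertices as the \emph{minimal face} of the polytope containing them and then to compute the dimension of that face combinatorially. Write $P=\{z\in\R^{m\times n}:Az=b,\ z\ge 0\}$, and assume $y^O\neq y^C$. For any point $z\in P$, the minimal face of $P$ containing $z$ is $\{w\in P:w_{ij}=0 \text{ whenever } z_{ij}=0\}$, and $z$ lies in its relative interior. I would apply this to the midpoint $w:=\tfrac12(y^O+y^C)$, whose support graph is exactly $H:=O\cup C$: the minimal face $G$ of $P$ containing both $y^O$ and $y^C$ is the transportation polytope on the bipartite graph $H$, obtained from $P$ by forcing $z_{ij}=0$ for every edge $\{\supply_i,\demand_j\}\notin H$, and $w\in\operatorname{relint}(G)$. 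Hence $O$ and $C$ are connected by an edge of $P$ exactly when $G$ is one-dimensional.

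The crux is then the dimension count. Since $w\in\operatorname{relint}(G)$ has all coordinates indexed by $H$ strictly positive, the affine hull of $G$ is the solution set of $Az=b$ after deleting the coordinates not in $H$; being nonempty, it is a translate of $\ker(A|_H)$, where $A|_H$ is the submatrix of $A$ on the columns indexed by $H$. Now $A$ is the node--edge incidence matrix of the \emph{bipartite} graph $K_{m,n}$, so $A|_H$ is the incidence matrix of $H$; as both $O$ and $C$ are spanning forests, $H$ spans all $m+n$ nodes, and the incidence matrix of a bipartite graph has nullity equal to its cycle rank (first Betti number) $|E(H)|-(m+n)+c$, where $c$ is the number of connected components of $H$. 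Therefore $\dim G$ equals the number of independent cycles of $O\cup C$.

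It remains to interpret $\dim G=1$. A graph has cycle rank one if and only if it contains a unique (simple) cycle: two distinct simple cycles are distinct nonzero vectors of the cycle space over $\mathbb F_2$, hence linearly independent, so the cycle rank would be at least two; conversely, cycle rank one makes the cycle space one-dimensional, so the unique non-tree edge relative to any spanning forest of $H$ closes the only cycle. Combining this with the previous paragraph, $O$ and $C$ are connected by an edge if and only if $O\cup C$ has a unique cycle, which is the assertion. (Since $y^O\ne y^C$ forces $\dim G\ge 1$, the union $O\cup C$ always contains at least one cycle, consistent with the statement.)

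I expect the part needing the most care to be the dimension count of the second paragraph: one must justify that the midpoint lies in the relative interior of $G$, so that $\operatorname{aff}(G)$ carries no hidden equalities beyond $Az=b$ and the forced zeros, and one must invoke the nullity formula for the incidence matrix of a bipartite graph while keeping track of the fact that $H=O\cup C$ may be disconnected and that the transportation polytope itself may be degenerate (the formula applies componentwise, so neither causes trouble, but both should be addressed). An equivalent, more concrete way to see the ``if'' direction is to observe that $y^C-y^O\in\ker A$ is supported on $O\cup C$ and therefore lies in $\ker(A|_H)$, which is spanned by the signed indicator of the unique cycle $Z$; this exhibits $y^C-y^O$ as a scalar multiple of a circuit of $P$, and then the dimension computation confirms that $[y^O,y^C]$ is a genuine $1$-face, i.e.\ an edge.
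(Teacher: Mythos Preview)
Your argument is correct, but there is nothing to compare it to: the paper does not prove Proposition~\ref{prop2} at all---it is quoted as a known fact with a reference to Klee--Witzgall~\cite{kw-68} and then used as background. Your minimal-face argument (identify the smallest face containing $y^O$ and $y^C$ with the transportation polytope on $H=O\cup C$, then compute its dimension as the real nullity of the bipartite incidence matrix $A|_H$, i.e.\ the cycle rank of $H$) is one of the standard routes to this classical statement and would be at home in any textbook treatment.

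One cosmetic remark: the detour through $\mathbb F_2$ in your third paragraph is unnecessary. You already have $\dim_{\mathbb R}\ker(A|_H)=|E(H)|-(m+n)+c$, so $\dim G=1$ is equivalent to $|E(H)|=(m+n)-c+1$, i.e.\ $H$ has exactly one more edge than a spanning forest of itself; that immediately gives a unique simple cycle without changing base fields. Your caveat about degeneracy and disconnectedness of $H$ is well placed but, as you note, harmless: every margin is strictly positive, so each node of $K_{m,n}$ is covered by both $O$ and $C$, and the rank formula for bipartite incidence matrices applies componentwise.
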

This unique cycle describes an edge direction of the transportation polytope. It is easy to see that every cycle of $K_{m,n}$ can appear as an edge of some $m{\times}n$--transportation polytope if we choose suitable margins. Thus, the set of circuits of an $m{\times}n$--transportation polytope just consists of all even simple cycles of the form $(\supply_{i_1},\demand_{j_1},\supply_{i_2},\demand_{j_2},\ldots, \supply_{i_k},\demand_{j_k})$.
Applying such a circuit 
at a (feasible) point $y$ corresponds changing the flow on the edges of $K_{m,n}$: We increase flow on all edges $\{\supply_{i_l},\demand_{j_l}\}$ and decrease flow on all edges $\{\demand_{i_l},\supply_{j_{l+1}}\}$ by the same arbitrary amount, the {\emph step length}. (For a shorter wording, we will often say that we \emph{increase} or \emph{decrease} edges.)

This in particular ensures that the `margin equations' defining the transportation polytope remain satisfied. However, we do not necessarily remain feasible, as applying as circuit possibly decreases the flow on an edge $\{\demand_{i_l},\supply_{j_{l+1}}\}$  below its lower bound of zero. So if we want to remain feasible, the step length at $y$ can be at most the minimum over all $y_{j_l,i_{l+1}}$.
This implies that the circuit steps with respect to the four different concepts are as follows: 
\begin{itemize}
	\item $\CD$: We can apply any circuit with any step length.
	\item $\CD_f$: We can apply any circuit $(\supply_{i_1},\demand_{j_1},\supply_{i_2},\demand_{j_2},\ldots, \supply_{i_k},\demand_{j_k})$ for which $y_{j_li_{l+1}}>0$ for all $l$. The step length is at most the minimum over all $y_{j_l,i_{l+1}}$.
	\item $\CD_{fm}$: We can apply any circuit $(\supply_{i_1},\demand_{j_1},\supply_{i_2},\demand_{j_2},\ldots, \supply_{i_k},\demand_{j_k})$ for which $y_{j_li_{l+1}}>0$ for all $l$. The step length equals the minimum over all $y_{j_l,i_{l+1}}$.
	\item $\CD_e$: By Proposition \ref{prop2}, an edge step (\emph{pivot}) goes from a vertex $O$ to a vertex $C$ that differs from $O$ in exactly one edge. This can be achieved by inserting an arbitrary edge $\{\supply_i,\demand_j\}\notin O$ in $O$. This closes an even cycle, which describes the circuit (respectively edge direction) we apply. Again we alternatingly increase and decrease along this cycle by the minimum over all $y_{j_l,i_{l+1}}$. Due to non-degeneracy (which we can assume here) this deletes exactly one edge and hence leads to a neighboring vertex $C$.
\end{itemize}

Observe that for $\CD_e$ every circuit step inserts one edge and deletes one edge and hence the corresponding support graphs are always cycle free. In contrast to this, for $\CD_{fm}$ we can insert multiple edges while deleting at least one edge, such that there can be cycles. In circuit walks of type $\CD_{f}$ we can insert multiple edges and we do not have to delete an edge at all. Finally, infeasible points can appear in walks of type $\CD$. This is why we do not consider a support graph in this case.



\medskip

For sake of notation, we distinguish two types of distances from an assignment $O$ to a fixed assignment $F$. We will use $\CD^O$, $\CD_{e}^O$, etc{.} to denote the respective circuits distances from $O$ and $F$, while the \emph{edge distance} $|O\backslash F|$ is just the number of edges that are in $O$, but not in $F$. $O\backslash F$ consists of those edges that have \emph{to be deleted} (or are \emph{to delete}) when walking from $O$ to $F$.

Clearly $\CD_e^O \geq |O\backslash F|$: By applying a single pivot at an assignment $O$, one obtains an assignment $C$ which has at most one additional edge (the new, inserted one) in common with $F$. In contrast, we can have $\CD_{fm}<|O\backslash F|$, as we will see in Example \ref{THEex1}.

This example was first mentioned in \cite{bhs-06}. It illustrates a situation that is crucial for proving the Hirsch conjecture for $m=2$ and understanding the graph diameter.
\begin{example}\label{THEex1}
We first consider an edge walk from an assigment $O$ to an assignment $F$. The nodes are labeled with the margins, the edges with the current flow; the bold edges highlight the circuit we apply and the dashed edges are those we insert. 
	\begin{figure}[H]
		\centering
			\begin{tikzpicture}[scale=0.35]
					\coordinate (c1) at (0,4);
					\coordinate (c2) at (0,2);
					\coordinate (x1) at	(5,5);	
					\coordinate (x2) at (5,3);							
					\coordinate (x3) at (5,1);
					\draw [fill, black] (c1) circle (0.1cm);
					\draw [fill, black] (c2) circle (0.1cm);
					\draw [fill, black] (x1) circle (0.1cm);
					\draw [fill, black] (x2) circle (0.1cm);
					\draw [fill, black] (x3) circle (0.1cm);
					\node[left] at (c1) {$3$};
					\node[left] at (c2) {$3$};
					\node[right] at (x1) {$2$};
					\node[right] at (x2) {$2$};
				  \node[right] at (x3) {$2$};
				  \draw (c1)--(x2);
				  \draw (c2)--(x2);
					\draw (c1)--(x1);
				  \draw (c2)--(x3);			  
				  \node at (4,5.2) {$2$};
					\node at (4,3.5) {$1$};
					\node at (4,2.3) {$1$};
					\node at (4,0.6) {$2$};					
					\node at (2.5,-0.8) {assignment $O$};			
			\end{tikzpicture}
			\begin{tikzpicture}[scale=0.35]
					\coordinate (c1) at (0,4);
					\coordinate (c2) at (0,2);
					\coordinate (x1) at	(5,5);	
					\coordinate (x2) at (5,3);							
					\coordinate (x3) at (5,1);
					\draw [fill, black] (c1) circle (0.1cm);
					\draw [fill, black] (c2) circle (0.1cm);
					\draw [fill, black] (x1) circle (0.1cm);
					\draw [fill, black] (x2) circle (0.1cm);
					\draw [fill, black] (x3) circle (0.1cm);
					\node[left] at (c1) {$3$};
					\node[left] at (c2) {$3$};
					\node[right] at (x1) {$2$};
					\node[right] at (x2) {$2$};
				  \node[right] at (x3) {$2$};
				  \draw[very thick]  (c1)--(x2);
				  \draw[very thick]  (c2)--(x2);
				  \draw (2.5,3.8)--(2.5,3.2);
					\draw (c1)--(x1);
				  \draw[very thick]  (c2)--(x3);
				  \draw[dashed, very thick] (c1)--(x3);				  
				  \node at (4,5.2) {$2$};
					\node at (4,3.5) {$1$};
					\node at (4,2.3) {$1$};
					\node at (4,0.6) {$2$};	
					\node at (2.5,-0.8) {pivot $1$};										
			\end{tikzpicture}
			\begin{tikzpicture}[scale=0.35]
					\coordinate (c1) at (0,4);
					\coordinate (c2) at (0,2);
					\coordinate (x1) at	(5,5);	
					\coordinate (x2) at (5,3);							
					\coordinate (x3) at (5,1);
					\draw [fill, black] (c1) circle (0.1cm);
					\draw [fill, black] (c2) circle (0.1cm);
					\draw [fill, black] (x1) circle (0.1cm);
					\draw [fill, black] (x2) circle (0.1cm);
					\draw [fill, black] (x3) circle (0.1cm);
					\node[left] at (c1) {$3$};
					\node[left] at (c2) {$3$};
					\node[right] at (x1) {$2$};
					\node[right] at (x2) {$2$};
				  \node[right] at (x3) {$2$};
				  \draw[very thick]  (c1)--(x1);
				  \draw[dashed, very thick] (c2)--(x1);
				  \draw[very thick]  (c1)--(x3);
				  \draw (c2)--(x2);
					\draw[very thick]  (c2)--(x3);
					\draw (2.5,1.8)--(2.5,1.2);					
					\node at (4,5.3) {$2$};
					\node at (4,3.2) {$2$};
					\node at (4,2.0) {$1$};
					\node at (4,0.6) {$1$};
					\node at (2.5,-0.8) {pivot $2$};										
			\end{tikzpicture}
			\begin{tikzpicture}[scale=0.35]
					\coordinate (c1) at (0,4);
					\coordinate (c2) at (0,2);
					\coordinate (x1) at	(5,5);	
					\coordinate (x2) at (5,3);							
					\coordinate (x3) at (5,1);
					\draw [fill, black] (c1) circle (0.1cm);
					\draw [fill, black] (c2) circle (0.1cm);
					\draw [fill, black] (x1) circle (0.1cm);
					\draw [fill, black] (x2) circle (0.1cm);
					\draw [fill, black] (x3) circle (0.1cm);
					\node[left] at (c1) {$3$};
					\node[left] at (c2) {$3$};
					\node[right] at (x1) {$2$};
					\node[right] at (x2) {$2$};
				  \node[right] at (x3) {$2$};
				  \draw[very thick]  (c1)--(x1);
				  \draw (c1)--(x3);
				  \draw[very thick]  (c2)--(x1);
					\draw[very thick]  (c2)--(x2);
					\draw[dashed, very thick] (c1)--(x2);
					\draw (2.5,4.8)--(2.5,4.2);
					\node at (4,5.2) {$1$};
					\node at (4,3.7) {$1$};
					\node at (4,2.3) {$2$};
					\node at (4,0.8) {$2$};
					\node at (2.5,-0.8) {pivot $3$};											
			\end{tikzpicture}
			\begin{tikzpicture}[scale=0.35]
					\coordinate (c1) at (0,4);
					\coordinate (c2) at (0,2);
					\coordinate (x1) at	(5,5);	
					\coordinate (x2) at (5,3);							
					\coordinate (x3) at (5,1);
					\draw [fill, black] (c1) circle (0.1cm);
					\draw [fill, black] (c2) circle (0.1cm);
					\draw [fill, black] (x1) circle (0.1cm);
					\draw [fill, black] (x2) circle (0.1cm);
					\draw [fill, black] (x3) circle (0.1cm);
					\node[left] at (c1) {$3$};
					\node[left] at (c2) {$3$};
					\node[right] at (x1) {$2$};
					\node[right] at (x2) {$2$};
				  \node[right] at (x3) {$2$};
				  \draw (c1)--(x3);
				  \draw (c1)--(x2);
				  \draw (c2)--(x1);
					\draw (c2)--(x2);					
					\node at (4,5.2) {$2$};
					\node at (4,3.6) {$1$};
					\node at (4,2.3) {$1$};
					\node at (4,0.9) {$2$};			
					\node at (2.5,-0.8) {assignment $F$};											
			\end{tikzpicture}
		\caption{An edge walk from vertex $O$ to vertex $F$ of length $3$.}
	\end{figure} 
In the first step, no matter which edge in $F\backslash O$ we insert, we have to delete an edge that is contained in $F$. Hence this is a walk of minimum length and thus $\CD_e^O$ is strictly larger than $|O\backslash F|$. 

In contrast, we can go from $O$ to $F$ in only one circuit step of type $\CD_{fm}$ (and thus also $\CD_f$ or $\CD$):  As we allow to go through the interior of the polytope, we can insert and delete two edges in just one step.
	\begin{figure}[H]
		\centering
			\begin{tikzpicture}[scale=0.35]
					\coordinate (c1) at (0,4);
					\coordinate (c2) at (0,2);
					\coordinate (x1) at	(5,5);	
					\coordinate (x2) at (5,3);							
					\coordinate (x3) at (5,1);
					\draw [fill, black] (c1) circle (0.1cm);
					\draw [fill, black] (c2) circle (0.1cm);
					\draw [fill, black] (x1) circle (0.1cm);
					\draw [fill, black] (x2) circle (0.1cm);
					\draw [fill, black] (x3) circle (0.1cm);
					\node[left] at (c1) {$3$};
					\node[left] at (c2) {$3$};
					\node[right] at (x1) {$2$};
					\node[right] at (x2) {$2$};
				  \node[right] at (x3) {$2$};
				  \draw (c1)--(x2);
				  \draw (c2)--(x2);
					\draw (c1)--(x1);
				  \draw (c2)--(x3);			  
				  \node at (4,5.2) {$2$};
					\node at (4,3.5) {$1$};
					\node at (4,2.3) {$1$};
					\node at (4,0.6) {$2$};					
					\node at (2.5,-0.8) {assignment $O$};			
			\end{tikzpicture}
			\begin{tikzpicture}[scale=0.35]
					\coordinate (c1) at (0,4);
					\coordinate (c2) at (0,2);
					\coordinate (x1) at	(5,5);	
					\coordinate (x2) at (5,3);							
					\coordinate (x3) at (5,1);
					\draw [fill, black] (c1) circle (0.1cm);
					\draw [fill, black] (c2) circle (0.1cm);
					\draw [fill, black] (x1) circle (0.1cm);
					\draw [fill, black] (x2) circle (0.1cm);
					\draw [fill, black] (x3) circle (0.1cm);
					\node[left] at (c1) {$3$};
					\node[left] at (c2) {$3$};
					\node[right] at (x1) {$2$};
					\node[right] at (x2) {$2$};
				  \node[right] at (x3) {$2$};
				  \draw (c1)--(x2);
				  \draw (c2)--(x2);
					\draw (2.5,1.8)--(2.5,1.2);					
					\draw (2.5,4.8)--(2.5,4.2);
					\draw[very thick]  (c1)--(x1);
				  \draw[very thick]  (c2)--(x3);
				  \draw[dashed, very thick] (c1)--(x3);				 
				  \draw[dashed, very thick] (c2)--(x1);	 
				  \node at (4,5.2) {$2$};
					\node at (4,3.5) {$1$};
					\node at (4,2.3) {$1$};
					\node at (4,0.6) {$2$};	
					\node at (2.5,-0.8) {circuit step $1$};										
			\end{tikzpicture}
			\begin{tikzpicture}[scale=0.35]
					\coordinate (c1) at (0,4);
					\coordinate (c2) at (0,2);
					\coordinate (x1) at	(5,5);	
					\coordinate (x2) at (5,3);							
					\coordinate (x3) at (5,1);
					\draw [fill, black] (c1) circle (0.1cm);
					\draw [fill, black] (c2) circle (0.1cm);
					\draw [fill, black] (x1) circle (0.1cm);
					\draw [fill, black] (x2) circle (0.1cm);
					\draw [fill, black] (x3) circle (0.1cm);
					\node[left] at (c1) {$3$};
					\node[left] at (c2) {$3$};
					\node[right] at (x1) {$2$};
					\node[right] at (x2) {$2$};
				  \node[right] at (x3) {$2$};
				  \draw (c1)--(x3);
				  \draw (c1)--(x2);
				  \draw (c2)--(x1);
					\draw (c2)--(x2);					
					\node at (4,5.2) {$2$};
					\node at (4,3.6) {$1$};
					\node at (4,2.3) {$1$};
					\node at (4,0.9) {$2$};			
					\node at (2.5,-0.8) {assignment $F$};											
			\end{tikzpicture}
		\caption{A feasible maximal circuit walk from vertex $O$ to vertex $F$ of length $1$.}
	\end{figure} 
\end{example}

\medskip

Finally, observe that the diameter bound of $m+n-1$, which we will prove in Section $4$, is closely related to the Hirsch-bound.
We know that the dimension of an $m{\times}n$--transportation polytope equals $(m-1)(n-1)$ \cite{kw-68}, and for non-degenerate transportation polytopes the number of facets is equal to $m\cdot n - k$, where $k$ is the number of \emph{critical edges}. This follows immediately from Theorem 2 in \cite{kw-68}. We call an edge $\{\supply_i,\demand_j\}$ \emph{critical} for a transportation polytope, if the edge $\{\supply_i,\demand_j\}$ exists in every support graph, that is, $y_{ij}>0$ for all solutions $y$. With this we can state the Hirsch conjecture as
\begin{conjecture}[Hirsch conjecture for transportation polytopes]
The graph diameter of an $m{\times}n$--tranportation polytope is at most $m+n-1-k$, where $k$ is the number of critical edges of the transportation polytope.
\end{conjecture}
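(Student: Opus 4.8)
The plan is to prove the bound by the standard reduction to edge walks between vertices, climbing the hierarchy $\CD_e\ge\CD_{fm}\ge\CD_f\ge\CD$ that is already exploited in Theorem \ref{thm:generalTP}. By the cited fact that every degenerate $m{\times}n$--transportation polytope is dominated in $\CD_e$ by a non-degenerate one, it suffices to treat the non-degenerate case, where every vertex is a spanning tree of $K_{m,n}$ and every pivot inserts one edge and deletes the unique-cycle edge of minimum flow (Proposition \ref{prop2}). The first step is to dispose of the critical edges: a critical edge has $y_{ij}>0$ at every vertex, so it lies in every vertex's tree and, in the non-degenerate case, can never be the edge deleted by a pivot, since deleting it would produce a vertex not containing it. Hence a shortest edge walk between two vertices never touches the $k$ critical edges, and we may contract them; the contracted structure retains the spanning-tree/unique-cycle combinatorics, its vertices have $m+n-1-k$ edges, and the target bound $mn-k-(m-1)(n-1)=m+n-1-k$ is exactly $f-d$ for this reduced structure. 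So it remains to show that in a non-degenerate transportation structure with no critical edges the graph diameter is at most the number of tree edges.

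The core of the program is to extend the marking system of Section 5 from $m=2,3$ to arbitrary $m$. Fix the target vertex $F$ and, for the current vertex $C$, mark the edges of $C$ by their relation to $F$ -- edges of $C\cap F$, leaf edges of $C\backslash F$, mixed edges of $C\backslash F$ -- and maintain a lexicographic potential whose first coordinate is $|C\backslash F|$ and whose lower coordinates count, for instance, the mixed demands of $C$ or a measure of how much flow sits on edges of $C\backslash F$. The goal is to show that from any $C\neq F$ there is a pivot, obtained by inserting a suitable edge of $F\backslash C$, whose created cycle alternates between arcs in $F$ and arcs in $C$ in such a way that the deleted minimum-flow edge lies in $C\backslash F$; summing the unit decrements of the first coordinate then yields a walk of length $\le m+n-1-k$, while the lower coordinates (as in the $3{\times}n$ proof) handle the case where the naive pivot would instead delete an edge of $C\cap F$ -- by first performing an auxiliary re-routing pivot that leaves $|C\backslash F|$ unchanged while decreasing the secondary potential -- and, for $m=2$, force the walk to be monotone as in Theorem \ref{thm:2n}. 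Equivalently, this amounts to lifting the unconditional bound $\CD_f\le m+n-1$ of Theorem \ref{thm:generalTP} all the way up to $\CD_e$, with the refinement by $k$ folded in by the contraction above.

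I expect the main obstacle to be exactly the step the paper singles out as the potential route to a counterexample: controlling $\CD_{fm}$, i.e.\ turning the feasible interior circuit walks that already meet the bound into edge walks of no greater length. For $m\le 3$ a non-degenerate vertex has at most two mixed demands (Figure \ref{fig:mixeddemands}), which keeps the unique-cycle case analysis finite and lets the marking system always locate a repairing pivot; for $m\ge 4$ the number of mixed demands grows with $m$, the inserted $F$-edge can create a long cycle winding through many supply points, and there may be no single pivot decreasing $|C\backslash F|$ without deleting an edge of $C\cap F$, nor an obvious secondary potential certifying that the needed detour is finite and length-neutral. The missing ingredient is therefore a structural guarantee -- a proof that $\CD_e\le\CD_{fm}$ for transportation polytopes, or at least that $\CD_{fm}\le m+n-1-k$ -- and absent it the program yields only the partial statements already established: the bound holds unconditionally for $\CD_f$ (Theorem \ref{thm:generalTP}), for $\CD_{fm}$ when $m=2$ (Theorem \ref{thm:CD_fm}), and for $\CD_e$ when $m\le 3$ (Theorems \ref{thm:2n} and \ref{thm:3n}), while the general case stays open.
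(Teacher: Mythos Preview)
The statement you are addressing is labeled a \emph{Conjecture} in the paper, and the paper does not prove it in general; it establishes only the special cases $m\le 3$ (Theorems~\ref{thm:2n} and~\ref{thm:3n}) and the weaker $\CD_f$ bound (Theorem~\ref{thm:generalTP}). Your proposal is, in the end, not a proof either: after outlining a program you correctly conclude that the general case stays open and that only the partial results already in the paper are recovered. In that sense your assessment and the paper's agree.

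Two technical remarks on the program itself. First, the ``contract the critical edges'' step is loose: contracting an edge of $K_{m,n}$ destroys bipartiteness, so you do not literally obtain a smaller transportation polytope; the correct observation (which the paper makes just after stating the conjecture) is simply that critical edges lie in every vertex and are never deleted by a pivot, so the maximum possible $|O\setminus F|$ is $m+n-1-k$, and it suffices to find an edge walk that never re-deletes an inserted $F$-edge. Second, you write that the missing ingredient would be ``a proof that $\CD_e\le\CD_{fm}$''; but $\CD_e\ge\CD_{fm}$ holds by definition, so this would assert equality, and in any case $\CD_{fm}\le m+n-1$ is itself not known for $m\ge 3$ (the paper proves it only for $m=2$). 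The genuine gap is exactly the one you name: for $m\ge 4$ a vertex can have up to $m-1$ mixed demands, the unique cycle created by inserting an $F$-edge can be long, and there is no known potential function guaranteeing that the marking system of Section~5 (or any variant) always finds a pivot that avoids deleting an $F$-edge.
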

Observe that $m+n-1-k$ is exactly the number of edges in which two assignments of the corresponding transportation polytope can differ. In particular, for proving the Hirsch conjecture for $m{\times}n$--transportation polytopes it is enough to show that there is a sequence of pivots that inserts the edges in $F$ such that no inserted edge is deleted again. 
\section{The hierarchy of diameters for general transportation polytopes}

In this section, we investigate upper and lower bounds on the chain of diameters for the hierarchy
$$ \CD_e \geq \CD_{fm} \geq \CD_f \geq \CD.$$
Before turning to the proof of Theorem \ref{thm:generalTP}, let us recall which bounds are currently known for the graph diameter of transportation polytopes. These are the upper and lower bounds that are currently known.

\begin{proposition}\label{prop:general}
\noindent\begin{itemize}
\item All $m{\times}n$-transportation polytopes satisfy $\CD_{e}\leq 8(m+n-2)$ \cite{bhs-06}.
\item  For all $m,n$, there exist $m{\times}n$-transportation polytopes with $$\CD_{e}\geq \min\{(m-1)(n-1),m+n-1\}.$$
\end{itemize}
\end{proposition}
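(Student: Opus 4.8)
\emph{Outline of the plan.} The first item is exactly the linear bound of Brightwell, van den Heuvel and Stougie \cite{bhs-06}, so only the lower bound of the second item asks for an argument. There is a one-line route: the hierarchy gives $\CD_e\geq\CD$, so the bound follows at once from the stronger statement about $\CD$ in Theorem~\ref{thm:generalTP}. I will nonetheless describe a direct argument for the graph diameter, since it never has to leave the vertices and serves as the prototype for the construction behind Theorem~\ref{thm:generalTP}.

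\emph{Reduction to a combinatorial question.} Using $\CD_e^{O}\geq|O\setminus F|$ from Section~2: for a \emph{non-degenerate} $m{\times}n$-transportation polytope one has $|O|=|F|=m+n-1$ and $|O\cup F|\leq mn$, whence
\[
  |O\setminus F|=(m+n-1)-|O\cap F|\leq\min\{\,m+n-1,\ (m-1)(n-1)\,\}.
\]
So it suffices to produce, for each pair $m,n$ (w.l.o.g.\ $m\leq n$), a non-degenerate $m{\times}n$-transportation polytope with two vertices $O,F$ at which this bound is attained; equivalently, two spanning trees $T_O,T_F$ of $K_{m,n}$ with $|T_O\cup T_F|=\min\{2(m+n-1),mn\}$, together with margins $u,v$ of equal total mass for which the unique flows supported on $T_O$ and on $T_F$ are both strictly positive. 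Then $\CD_e\geq\CD_e^{O}\geq|O\setminus F|=\min\{(m-1)(n-1),m+n-1\}$, as wanted.

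\emph{Choosing the two trees.} If $(m-2)(n-2)\geq2$ (so $m,n\geq3$ and $(m,n)\neq(3,3)$, and the minimum equals $m+n-1$), the plan is to take $T_O,T_F$ edge-disjoint: let $T_O$ be the ``comb'' consisting of the backbone path $\demand_1,\supply_1,\demand_2,\supply_2,\dots,\demand_m,\supply_m$ together with pendant edges $\{\supply_m,\demand_j\}$ for $m<j\leq n$; one checks that $K_{m,n}\setminus T_O$ is connected, so any of its spanning trees may be used as $T_F$. In the remaining cases --- $m=2$, or $(m,n)=(3,3)$ (the case $m=1$ being trivial) --- the minimum equals $(m-1)(n-1)=mn-(m+n-1)$ and instead I want $T_O\cup T_F=K_{m,n}$: for $m=2$, put the two edges at a common mixed demand $\demand_1$ into both trees and, for $j\geq2$, route $\demand_j$ to $\supply_1$ in $T_O$ and to $\supply_2$ in $T_F$, so that $|T_O\cap T_F|=2$; for $(m,n)=(3,3)$, two Hamiltonian paths of $K_{3,3}$ sharing exactly one edge do the job.

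\emph{Finding common margins --- the main obstacle.} For a fixed spanning tree $T$ the supported flow is a linear function of $(u,v)$, so the margins realizing $T$ as a vertex form a nonempty open cone; the real difficulty --- and the only step that is not purely formal --- is that for two \emph{arbitrary} edge-disjoint spanning trees these cones may fail to meet, so the trees have to be coordinated (as above) with the choice of margins. Reading the flows off along the backbones turns positivity into interlacing inequalities between partial sums of $u$ and of $v$, and I expect that checking the simultaneous satisfiability of the finitely many such inequalities coming from $T_O$ and from $T_F$ reduces to picking near-uniform margins and perturbing them into general position to recover non-degeneracy. For $m=2$ this is immediate: $u=(n,n)$ and $v=(n+1,1,\dots,1)$ is non-degenerate and realizes both $T_O$ and $T_F$, and a similar explicit choice settles the small cases. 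The hard part is carrying out this joint-realizability bookkeeping uniformly in $m,n$; it is precisely the technical core that the proof of Theorem~\ref{thm:generalTP} develops in detail, there with the extra care that $\CD$, unlike $\CD_e$, may shortcut through the interior and that such shortcuts must also be excluded.
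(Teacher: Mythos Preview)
Your approach matches the paper's: reduce to $\CD_e^O \geq |O \setminus F|$ (from Section~2) and exhibit vertices $O,F$ with $|O \setminus F| = \min\{(m-1)(n-1), m+n-1\}$. The paper, however, dispatches the second item in a single sentence --- ``There are assignments $O$ and $F$ that differ by exactly $\min\{(m-1)(n-1),m+n-1\}$ edges, as that is the rank of the underlying constraint matrix'' --- and neither constructs explicit trees nor exhibits margins. What you flag as the ``main obstacle'' (joint realizability of two prescribed spanning trees by common non-degenerate margins) is simply not addressed; the paper treats this existence as folklore. So your argument is strictly more detailed than the paper's, and your caution is in fact more scrupulous than the original; for this proposition the authors are content with the one-line rank remark.

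One small note: your shortcut via Theorem~\ref{thm:generalTP} is logically sound but reverses the paper's narrative --- Proposition~\ref{prop:general} is presented as the known motivation \emph{for} Theorem~\ref{thm:generalTP}, not as a corollary of it.
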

The latter is easy to see. There are assignments $O$ and $F$ that differ by exactly $\min\{(m-1)(n-1),m+n-1\}$ edges, as that is the rank of the underlying constraint matrix. The lower bound on $\CD_e$ then follows from being able to insert only one single edge from $F\backslash O$ into a current assignment at a time. Recall that $(m-1)(n-1)$ is the dimension of the polytope. We have $m+n-1\leq (m-1)(n-1)$ for all $m\leq n$ but $m=2$ or $m=n=3$.

Theorem \ref{thm:generalTP} states two new bounds on circuit diameters that are intimately related to the ones above. We split the proof into two parts. First, we show that $\CD_f\leq m+n-1$ for all transportation polytopes. Essentially, the `lower part' of the hierarchy satisfies the Hirsch bound. Second, we will show that for all combinations of $m,n$, there exist $m{\times}n$--transportation polytopes with $\CD \geq \min\{(m-1)(n-1),m+n-1\}$, which generalizes the lower bound in Proposition \ref{prop:general} to all of the hierarchy.

For these proofs, we have to introduce the notion of distance $\CD_s$ using a so-called \emph{sign-compatible} circuit walk (denoted $\CD_{fs}$ in \cite{blf-14}). Two vectors $x$ and $y$ are sign-compatible (with respect to the identity matrix) if $x$ and $y$ belong to the same orthant of $\R^d$. For a polytope of the form $P=\{\,z\in \R^d:Az=b,\,z\geq 0\,\}$, such as a transportation polytope, and two vertices $v^{(1)},v^{(2)}$ a circuit walk is sign-compatible if all circuits used are pairwise sign-compatible and are sign-compatible with $v^{(2)}-v^{(1)}$. Note that a sign-compatible walk is in particular feasible and thus $\CD_s \geq \CD_f$ \cite{blf-14}.

We perform the proof by showing that the more restrictive category $\CD_{s}$ satisfies $\CD_{s} \leq m+n-1$.

\begin{lemma}\label{lem:upperboundonCDf}
All $m{\times}n$--transportation polytopes satisfy $\CD_f \leq m+n-1$.
\end{lemma}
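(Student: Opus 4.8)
The plan is to prove the stronger inequality $\CD_s \le m+n-1$; since a sign-compatible walk is feasible, $\CD_s \ge \CD_f$ (as recalled above), so this implies the lemma. Fix two vertices $O$ and $F$ and set $w := y^F - y^O$. Both $y^O,y^F$ satisfy the margin equations, so $w \in \ker A$; equivalently, orienting every edge $\{\supply_i,\demand_j\}$ of $K_{m,n}$ from supply to demand, $w$ is a (signed) circulation, i.e.\ the net flow at every node is $0$. The goal is then to build a conformal decomposition $w = \sum_{l=1}^{t}\alpha_l g^{l}$ with all $\alpha_l>0$, each $g^{l}$ a circuit of the polytope sign-compatible with $w$ (hence the $g^l$ pairwise sign-compatible), and with $t \le m+n-1$.

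To produce the decomposition I would reorient the support of $w$: use the arc $\supply_i\to\demand_j$ when $w_{ij}>0$ and $\demand_j\to\supply_i$ when $w_{ij}<0$, turning $w$ into a strictly positive circulation on this digraph. While $w\neq 0$, conservation forces the support to contain a directed cycle, which in the bipartite graph $K_{m,n}$ is an even cycle $(\supply_{i_1},\demand_{j_1},\dots,\supply_{i_k},\demand_{j_k})$; its $\pm 1$ indicator $g$ (with $+1$ on the edges where $w>0$ and $-1$ on those where $w<0$) alternates in sign around the cycle and is therefore exactly a circuit of the transportation polytope, sign-compatible with $w$ by construction. Subtract $\delta\,g$ from $w$ with $\delta = \min|w_{ij}|$ over the edges of the cycle: this changes no signs, keeps $w$ a circulation, and zeroes at least one edge. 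Iterating gives the decomposition, and the walk $O,\, O+\alpha_1 g^{1},\, O+\alpha_1 g^{1}+\alpha_2 g^{2},\,\dots,\,F$ is sign-compatible. Its intermediate points are feasible: on a coordinate $ij$ with $w_{ij}\ge 0$ all $g^l_{ij}\ge 0$, so every partial sum lies in $[y^O_{ij},y^F_{ij}]\subseteq [0,\infty)$, symmetrically when $w_{ij}\le 0$, and $w_{ij}=0$ forces every $g^l_{ij}=0$; the margin equations persist since each $g^l\in\ker A$.

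The remaining point — and the crux — is bounding $t$. Each iteration zeroes an edge $e^{\ast}$ lying on the extracted cycle, so the new support is contained in $S\setminus\{e^{\ast}\}$, whose cycle rank $\mu(\cdot):=|E|-|V|+(\text{number of components})$ equals $\mu(S)-1$ (deleting an edge of a cycle drops the rank by exactly one), and $\mu$ is monotone under passing to subgraphs; hence $\mu$ strictly decreases every step and $t \le \mu(\operatorname{supp} w) \le \mu(O\cup F)$, again by monotonicity since $\operatorname{supp} w \subseteq O\cup F$. Finally, $O$ is a vertex, so $B(y^O)$ is a forest and $\mu(O)=0$; inserting the edges of $F\setminus O$ one at a time raises the cycle rank by at most one each, so $\mu(O\cup F)\le |F\setminus O|\le |F|\le m+n-1$. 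Thus $\CD_s^{O}\le t\le m+n-1$ for every pair of vertices, giving $\CD_s\le m+n-1$ and therefore $\CD_f\le m+n-1$. The main obstacle is precisely this last count: a conformal decomposition of a generic element of $\ker A$ could a priori need $\dim\ker A=(m-1)(n-1)$ terms, and it is the forest structure of the two endpoints — which caps the cycle rank of $O\cup F$ at $m+n-1$ — that pins the bound at the Hirsch value.
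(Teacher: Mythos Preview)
Your proof is correct. Both your argument and the paper's go through the stronger bound $\CD_s \le m+n-1$ and then use $\CD_f \le \CD_s$. The difference is that the paper simply observes $\mathrm{rank}(A)=m+n-1$ and invokes Corollary~2 of \cite{blf-14}, which gives $\CD_s \le \mathrm{rank}(A)$ for any polytope of the form $\{z:Az=b,\,z\ge 0\}$, as a black box. You instead unpack a self-contained combinatorial argument tailored to the graphic structure of transportation polytopes: conformal cycle-extraction on $w=y^F-y^O$, with the number of extracted cycles controlled by the cycle rank $\mu(O\cup F)$, and then $\mu(O\cup F)\le |F\setminus O|\le |F|\le m+n-1$ from the forest structure of the two vertices. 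Your argument is essentially what the proof of the cited corollary specializes to in this setting, so the two routes are close in spirit; yours has the virtue of being self-contained and of making the bipartite-graph combinatorics explicit, while the paper's one-line version emphasizes that the bound is a general rank phenomenon rather than anything particular to transportation polytopes.
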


\begin{proof}
Recall that in a standard representation of a transportation polytope $P=\{y\in \mathbb{R}^{m\times n}: Ay = \binom uv , y \geq 0\}$, the matrix $A\in \mathbb{Z}^{(m+ n)\times (m\cdot n)}$ has row rank $m+n-1$. In other words, of the $m+n$ margin equalities, one is redundant and can be derived from the others. We obtain $\text{rank}(A)=m+n-1$. Then $P$ satisfies $\CD_f \leq \CD_{s}\leq m+n-1$ by Corollary $2$ in \cite{blf-14}.
\end{proof}

Second, we show the existence of margins for which $\CD \geq \min\{(m-1)(n-1),m+n-1\}$. We do so by outlining a general principle of constructing such a set of margins in the corresponding transportation polytope. 

\begin{lemma}\label{lem:perturb}
For all $m,n$, there exist $m{\times}n$--transportation polytopes with $$\CD \geq \min\{(m-1)(n-1),m+n-1\}.$$
\end{lemma}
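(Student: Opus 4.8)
The plan is to exhibit, for every $m\le n$, concrete margins whose transportation polytope has two vertices $O,F$ at circuit distance at least $r:=\min\{(m-1)(n-1),m+n-1\}$, even in the most permissive notion $\CD$. The governing invariant is the \emph{edge distance} $|O\setminus F|$ combined with a rank/support argument: since a transportation polytope sits inside the affine space $\{Ay=b\}$ with $\operatorname{rank}(A)=m+n-1$, any vertex has at most $m+n-1$ positive entries, and a single circuit step $y^{(i+1)}=y^{(i)}+\alpha_i g^i$ changes the support by the support of one elementary vector. So the first step is to pick margins — generically perturbed so the polytope is non-degenerate — for which the two chosen vertices $O$ and $F$ have \emph{disjoint} supports (or supports overlapping in as few edges as possible), forcing $|O\setminus F|=r$. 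Concretely, when $m+n-1\le (m-1)(n-1)$ one wants $O$ and $F$ to be two spanning trees of $K_{m,n}$ sharing no edge; when $(m-1)(n-1)<m+n-1$ (the cases $m=2$ or $m=n=3$) one instead uses that $O\setminus F$ can have size the full dimension $(m-1)(n-1)$.

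The heart of the argument is a lower bound of the form: \emph{any} $\CD$-circuit walk from $O$ to $F$ — even one passing through infeasible points — needs at least $r$ steps. The reason is that a circuit of $K_{m,n}$ is an even cycle $(\supply_{i_1},\demand_{j_1},\ldots,\supply_{i_k},\demand_{j_k})$, and applying it with step length $\alpha$ alters the flow on exactly $2k$ edges. Crucially, at the final assignment $F$ the vector $y^F$ has support exactly the tree $F$, so reading the walk backwards from $F$, the \emph{last} circuit used must bring the $O\setminus F$-coordinates (all of which must be zero at $F$ but are positive at $O$) down to zero; a single even cycle can only zero out the coordinates lying on that one cycle. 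I would formalize this by a potential/counting argument: track how many of the $r$ coordinates indexed by $O\setminus F$ are still nonzero, and show each circuit step can change this count by at most... well, here one must be careful — a circuit can touch several of these coordinates at once. So the clean version is instead to choose the margins so that the circuits that could simultaneously affect many $O\setminus F$-coordinates are \emph{forced to leave the polytope region we care about} or create new nonzero coordinates that themselves must later be cleared; the bookkeeping is that the number of ``wrong'' nonzero coordinates cannot decrease by more than one per step given the chosen margins. This is exactly the ``general principle of constructing such a set of margins'' the text promises.

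The main obstacle — and the step I would spend the most care on — is precisely this: controlling the fact that in $\CD$ we may use \emph{arbitrary} step lengths and visit infeasible points, so a clever long circuit might kill many target coordinates at once and appear to shortcut. The resolution is a careful choice of marginals making the relevant circuits ``rigid'': for instance, choosing $u_i,v_j$ so that any even cycle in $K_{m,n}$ that would zero out two of the $O\setminus F$-edges simultaneously is forced to have a step length inconsistent with also not introducing fresh positive entries outside $F$ that must be cleared later. A convenient trick is to make $O$ itself a vertex whose positive coordinates take pairwise ``incommensurable'' or at least strictly decreasing values along any cycle, so that a single circuit step can zero out at most one of them. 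Once that rigidity lemma is in place, the bound $\CD\ge r$ is immediate by induction on the number of remaining nonzero $O\setminus F$-coordinates, and combined with Lemma~\ref{lem:upperboundonCDf} it pins $\CD_f$ (hence the Hirsch bound) exactly. I would close by checking the two small exceptional cases $m=2$ and $m=n=3$ separately, where $r=(m-1)(n-1)$, using the explicit $2\times n$ and $3\times 3$ examples already implicit in Example~\ref{THEex1} and Figure~\ref{fig:mixeddemands}.
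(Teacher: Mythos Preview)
Your proposal has a genuine gap. The invariant you want to track --- the number of nonzero coordinates indexed by $O\setminus F$ --- does not control $\CD$ walks, because in the most permissive notion the intermediate points $y^{(i)}$ are completely unconstrained: they need not be feasible, need not be vertices, and their supports can be arbitrary. Nothing forces any $O\setminus F$-coordinate to be zeroed out before the very last step, so an induction on ``remaining nonzero $O\setminus F$-coordinates'' has no teeth. Your proposed fix (making the positive entries of $y^O$ pairwise incommensurable so that ``a single circuit step can zero out at most one of them'') does not help either: incommensurability is a property of $y^O$, but the point just before $F$ is $y^{(k-1)}$, over whose coordinates you have no control. More fundamentally, the only constraint a $\CD$ walk of length $k$ imposes is the single linear equation $y^F-y^O=\sum_{i=1}^k \alpha_i g^i$; whether a walk of length $<r$ exists is exactly the question of whether $y^F-y^O$ lies in the span of some $r-1$ circuits. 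Example~\ref{THEex1} shows this can happen even when $|O\setminus F|=2$: one circuit with a single step length suffices because the vector $y^F-y^O$ happens to be a circuit multiple.

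The paper's argument is therefore linear-algebraic rather than combinatorial. One first builds $k=\min\{(m-1)(n-1),m+n-1\}$ linearly independent (and sign-compatible) circuits $g^1,\dots,g^k$ whose increase-edges form a tree $F$ and whose decrease-edges form a tree $O$, so that $y^F-y^O=\sum g^i$. Then one perturbs the margins along each $g^i$ by independent small $\epsilon_i$, obtaining $y'^F-y'^O=\sum(1+\epsilon_i)g^i$. Since there are only finitely many $(k-1)$-element sets of circuits (they depend only on $A$), one can choose the $\epsilon_i$ so that $y'^F-y'^O$ lies in none of the corresponding $(k-1)$-dimensional subspaces. That single dimension argument rules out every short walk at once, uniformly in $m,n$, with no separate treatment of the small cases.
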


\begin{proof}
We begin by constructing an $m{\times}n$--transportation polytope $P$ with margins $u,v$ and two vertices $O,F$ such that there is a sign-compatible circuit walk from $O$ to $F$ that uses exactly $k=\min\{(m-1)(n-1),m+n-1\}$ linearly independent circuits. 

Consider the set of circuits depicted in Figure \ref{fig:whichcircuitstouse}, where the dashed edges are the ones being increased. It is not difficult to check that for all pairs $m\leq n$, there exist at least $k$ such circuits, they are all linearly independent, sign-compatible, and that neither the union of the edges to decrease (the solid edges) nor the union of the edges to increase (the dashed edges) contains a cycle. 


	\begin{figure}[H]
		\centering  
\subfloat[$2\leq j\leq n$  if $n\leq 3$ $\quad $ $2\leq j\leq n{-}1$ if $n\geq 4$]{
			\begin{tikzpicture}[scale=0.35]
					\coordinate (c1) at (0,4);
					\coordinate (c2) at (0,2);
					\coordinate (c3) at (0,0);
					\coordinate (x1) at	(5,4);	
					\coordinate (x2) at (5,2);							
					\coordinate (x3) at (5,0);
					\draw [fill, black] (c1) circle (0.1cm);
					\draw [fill, black] (c2) circle (0.1cm);
					\draw [fill, white] (c3) circle (0.1cm);
					\draw [fill, black] (x1) circle (0.1cm);
					\draw [fill, black] (x2) circle (0.1cm);
					\draw [fill, white] (x3) circle (0.1cm);
					\node[left] at (c1) {$\supply_{1}$};
					\node[left] at (c2) {$\supply_2$};
					\node[left,below] at (c3) {$ $};
					\node[right] at (x1) {$\demand_{1}$};
					\node[right] at (x2) {$\demand_{j}$};
				 	 \draw (c1)--(x1);
				  	\draw[dashed] (c2)--(x1);
				  	\draw[dashed] (c1)--(x2);
				 	 \draw (c2)--(x2);							
			\end{tikzpicture}
}
\hspace*{0.5cm}\subfloat[$3\leq i\leq m$]{
			\begin{tikzpicture}[scale=0.35]
					\coordinate (c1) at (0,4);
					\coordinate (c2) at (0,2);
					\coordinate (c3) at (0,0);
					\coordinate (x1) at	(5,4);	
					\coordinate (x2) at (5,2);							
					\coordinate (x3) at (5,0);
					\draw [fill, black] (c1) circle (0.1cm);
					\draw [fill, black] (c2) circle (0.1cm);
					\draw [fill, white] (c3) circle (0.1cm);
					\draw [fill, black] (x1) circle (0.1cm);
					\draw [fill, black] (x2) circle (0.1cm);
					\draw [fill, white] (x3) circle (0.1cm);
					\node[left] at (c1) {$\supply_{1}$};
					\node[left] at (c3) {$\supply_i$};
					\node[right] at (x1) {$\demand_{1}$};
					\node[right] at (x2) {$\demand_2$};
				 	 \draw (c1)--(x1);
				  	\draw[dashed] (c3)--(x1);
				  	\draw[dashed] (c1)--(x2);
				 	 \draw (c3)--(x2);							
			\end{tikzpicture}
}

\hspace*{0.5cm}\subfloat[if $m\geq 3$]{
			\begin{tikzpicture}[scale=0.35]
					\coordinate (c1) at (0,4);
					\coordinate (c2) at (0,2);
					\coordinate (c3) at (0,0);
					\coordinate (x1) at	(5,4);	
					\coordinate (x2) at (5,2);							
					\coordinate (x3) at (5,0);
					\draw [fill, black] (c1) circle (0.1cm);
					\draw [fill, black] (c2) circle (0.1cm);
					\draw [fill, black] (c3) circle (0.1cm);
					\draw [fill, black] (x1) circle (0.1cm);
					\draw [fill, black] (x2) circle (0.1cm);
					\draw [fill, black] (x3) circle (0.1cm);
					\node[left] at (c1) {$\supply_1$};
					\node[left] at (c2) {$\supply_2$};
					\node[left] at (c3) {$\supply_3$};
					\node[right] at (x1) {$\demand_1$};
					\node[right] at (x2) {$\demand_2$};
					\node[right] at (x3) {$\demand_3$};
				 	 \draw (c1)--(x1);
				  	\draw[dashed] (c2)--(x1);
				  	\draw[dashed] (c1)--(x2);
					\draw[dashed] (c3)--(x3);
				 	 \draw (c2)--(x3);	
 					\draw (c3)--(x2);							
			\end{tikzpicture}
}
\subfloat[if $n \geq 4$]{
			\begin{tikzpicture}[scale=0.35]
					\coordinate (c1) at (0,4);
					\coordinate (c2) at (0,2);
					\coordinate (c3) at (0,0);
					\coordinate (x1) at	(5,4);	
					\coordinate (x2) at (5,2);							
					\coordinate (x3) at (5,0);
					\draw [fill, black] (c1) circle (0.1cm);
					\draw [fill, black] (c2) circle (0.1cm);
					\draw [fill, white] (c3) circle (0.1cm);
					\draw [fill, black] (x1) circle (0.1cm);
					\draw [fill, black] (x2) circle (0.1cm);
					\draw [fill, white] (x3) circle (0.1cm);
					\node[left] at (c1) {$\supply_{1}$};
					\node[left] at (c2) {$\supply_2$};
					\node[left,below] at (c3) {$ $};
					\node[right] at (x2) {$\demand_{2}$};
					\node[right] at (x3) {$\demand_{n}$};
				 	 \draw[dashed] (c1)--(x2);
				  	\draw (c2)--(x2);
				  	\draw[dashed] (c2)--(x3);
				 	 \draw (c1)--(x3);							
			\end{tikzpicture}
}
\hspace*{0.5cm}\subfloat[if $m\geq 3$, $n \geq 4$]{
			\begin{tikzpicture}[scale=0.35]
					\coordinate (c1) at (0,4);
					\coordinate (c2) at (0,2);
					\coordinate (c3) at (0,0);
					\coordinate (x1) at	(5,4);	
					\coordinate (x2) at (5,2);							
					\coordinate (x3) at (5,0);
					\draw [fill, black] (c1) circle (0.1cm);
					\draw [fill, black] (c2) circle (0.1cm);
					\draw [fill, black] (c3) circle (0.1cm);
					\draw [fill, black] (x1) circle (0.1cm);
					\draw [fill, black] (x2) circle (0.1cm);
					\draw [fill, black] (x3) circle (0.1cm);
					\node[left] at (c1) {$\supply_1$};
					\node[left] at (c2) {$\supply_2$};
					\node[left] at (c3) {$\supply_3$};
					\node[right] at (x1) {$\demand_1$};
					\node[right] at (x2) {$\demand_2$};
					\node[right] at (x3) {$\demand_n$};
				 	 \draw (c1)--(x1);
				  	\draw[dashed] (c1)--(x2);
					\draw (c2)--(x2);	
					\draw[dashed] (c3)--(x1);
				 	 \draw[dashed] (c2)--(x3);	
 					\draw (c3)--(x3);							
			\end{tikzpicture}
}
		\caption{A set of sign-compatible, linearly independent circuits. }\label{fig:whichcircuitstouse}
	\end{figure} 

Given a set of $k$ such circuits $g^i$, define the margins $u,v$ of $P$ to componentwisely be the number of circuits in this set that the respective point lies on. Then the set of edges to decrease induce a vertex $O$ as they do not form a cycle, and similarly the set of edges to increase induce a vertex $F$. 

Note this construction implies that $y^F-y^O=\sum\limits_{i=1}^{k} g^i$. Let us now define a new polytope $P'$ by perturbing the margins w.l.o.g `along  $g^1$': This means that we derive new margins $u',v'$ from $u,v$ by choosing an $\epsilon>0$ and then setting $u_j'=u_j+\epsilon$ and  $v_j'=v_j+\epsilon$ for all supply and demand points incident with the circuit $g^1$. For $\epsilon$ sufficiently small, the same support graphs $B(y^O), B(y^F)$ still induce vertices $y'^O,y'^F$ of $P'$ and we see $y'^F-y'^O=(1+\epsilon) g^1+\sum\limits_{i=2}^{k} g^i$. Of course, such a perturbation can be done with respect to any subset of the $g^i$, so that each $g^i$ gets its own small $\epsilon_i$.  Further, since each of the $g^i$ are sign compatible, we may choose each $\epsilon_i$ independently. So we have a choice of margins such that the perturbed vectors
$$y^F-y^O=\sum\limits_{i=1}^{k} (1+\epsilon_i)g^i,$$
live in a $k$-dimensional space, for which $y'^O,y'^F$ both correspond to vertices of the respective polytopes $P'$.

As the $g^i$ are linearly independent, no strict subset suffices to be able to walk from $y^O$ to $y^F$ in $P$. Now consider a set of (up to) $k-1$ circuits $g'^1,\dots,g'^{k-1}$ and assume $y^O-y^F$ is in the linear subspace spanned by $g'^1,\dots,g'^{k-1}$. There is a $g^i$ which is linearly independent from $g'^1,\dots,g'^{k-1}$ and thus perturbing $P$ along such a $g^i$ yields an infinite set of polytopes $P'$ for which $y'^O-y'^F$ is not in the span of $g'^1,\dots,g'^{k-1}$. As there is only a finite number of sets of up to $k-1$ circuits (recall these only depend on the fixed matrix $A$, not the margins), there is a perturbation (or rather a sequence of perturbations) along the $g^i$ such that we obtain a polytope $P'$ for which $y'^F-y'^O$ is not in the linear subspace spanned by any set of (up to) $k-1$ circuits. This proves the claim.
\end{proof}
Let us demonstrate what a perturbation as described in Lemma \ref{lem:perturb} looks like in practice. Recall Example \ref{THEex1}, and the second circuit walk finishing in just a single step. We call this circuit $g$.

The $2{\times}3$--transportation polytope in the example has dimension $2$. Our goal is to come up with a perturbation of the margins such that any (not necessarily feasible) circuit walk from $O$ to $F$ has to use two steps. In particular we want to rule out going from $O$ to $F$ by only applying $g$.

To do so, consider the two circuits $g^1, g^2$ depicted in Figure \ref{fig:ex2circuits}. The dashed edges are the ones that are increased along the respective circuit. (Note that they are not sign-compatible, but this not relevant for our simple example, as we will only show a perturbation along a single circuit.)
	\begin{figure}[H]
		\centering
			\begin{tikzpicture}[scale=0.35]
					\coordinate (c1) at (0,4);
					\coordinate (c2) at (0,2);
					\coordinate (x1) at	(5,5);	
					\coordinate (x2) at (5,3);							
					\coordinate (x3) at (5,1);
					\draw [fill, black] (c1) circle (0.1cm);
					\draw [fill, black] (c2) circle (0.1cm);
					\draw [fill, black] (x1) circle (0.1cm);
					\draw [fill, black] (x2) circle (0.1cm);
					\draw [fill, black] (x3) circle (0.1cm);
				  \draw[dashed] (c1)--(x2);
				  \draw (c2)--(x2);
				  \draw (c1)--(x1);
				  \draw[dashed] (c2)--(x1);							
					\node at (2.5,-0.8) {circuit $g^1$};			
			\end{tikzpicture}
\hspace*{1cm}
			\begin{tikzpicture}[scale=0.35]
					\coordinate (c1) at (0,4);
					\coordinate (c2) at (0,2);
					\coordinate (x1) at	(5,5);	
					\coordinate (x2) at (5,3);							
					\coordinate (x3) at (5,1);
					\draw [fill, black] (c1) circle (0.1cm);
					\draw [fill, black] (c2) circle (0.1cm);
					\draw [fill, black] (x1) circle (0.1cm);
					\draw [fill, black] (x2) circle (0.1cm);
					\draw [fill, black] (x3) circle (0.1cm);
				  \draw (c1)--(x2);
				  \draw[dashed] (c2)--(x2);
				 \draw[dashed] (c1)--(x3);
				  \draw (c2)--(x3);
					\node at (2.5,-0.8){circuit $g^2$};										
			\end{tikzpicture}
		\caption{Two circuits $g^1,g^2$ in a $2{\times}3$--transportation polytope.}\label{fig:ex2circuits}
	\end{figure} 
They are linearly independent, as they both share only the edges $\{\supply_1,\demand_2\}, \{\supply_2,\demand_2\}$. Applying both of them with step length $2$ transfers $O$ to $F$ via a circuit walk of length $2$ (whose intermediate point is not feasible).

But we already know that one can go from $O$ to $F$ in just a single step by using $g$. So these margins do not satisfy the properties in Lemma \ref{lem:perturb}, and we have to apply the construction described in the second part of the proof. 

Note that $g$ is linearly independent both from $g^1$ and from $g^2$, so it does not matter which of the two circuits we pick for the construction. We use $g^1$ and choose a sufficiently small $\epsilon>0$. We then add $\epsilon$ to all nodes incident to $g^1$. This is depicted in Figure \ref{fig:perturbingacircuit}.

	\begin{figure}[H]
		\centering
			\begin{tikzpicture}[scale=0.35]
					\coordinate (c1) at (0,4);
					\coordinate (c2) at (0,2);
					\coordinate (x1) at	(5,5);	
					\coordinate (x2) at (5,3);							
					\coordinate (x3) at (5,1);
					\draw [fill, black] (c1) circle (0.1cm);
					\draw [fill, black] (c2) circle (0.1cm);
					\draw [fill, black] (x1) circle (0.1cm);
					\draw [fill, black] (x2) circle (0.1cm);
					\draw [fill, black] (x3) circle (0.1cm);
  \draw[dashed] (c1)--(x2);
				  \draw (c2)--(x2);
				  \draw (c1)--(x1);
				  \draw[dashed] (c2)--(x1);	
					\node[left] at (c1) {$3$};
					\node[left] at (c2) {$3$};
					\node[right] at (x1) {$2$};
					\node[right] at (x2) {$2$};
				  \node[right] at (x3) {$2$};					
					\node at (2.5,-0.8) {initial margins};			
			\end{tikzpicture}
\hspace{2em}
			\begin{tikzpicture}[scale=0.35]
					\coordinate (c1) at (0,4);
					\coordinate (c2) at (0,2);
					\coordinate (x1) at	(5,5);	
					\coordinate (x2) at (5,3);							
					\coordinate (x3) at (5,1);
					\draw [fill, black] (c1) circle (0.1cm);
					\draw [fill, black] (c2) circle (0.1cm);
					\draw [fill, black] (x1) circle (0.1cm);
					\draw [fill, black] (x2) circle (0.1cm);
					\draw [fill, black] (x3) circle (0.1cm);
  \draw[dashed] (c1)--(x2);
				  \draw (c2)--(x2);
				  \draw (c1)--(x1);
				  \draw[dashed] (c2)--(x1);	
					\node[left] at (c1) {$3+\epsilon$};
					\node[left] at (c2) {$3+\epsilon$};
					\node[right] at (x1) {$2+\epsilon$};
					\node[right] at (x2) {$2+\epsilon$};
				  \node[right] at (x3) {$2$};				
					\node at (2.5,-0.8) {perturbed margins};											
			\end{tikzpicture}
		\caption{A perturbation of margins along circuit $g^1$.}\label{fig:perturbingacircuit}
	\end{figure} 

Then the original support graphs $B(y^O)$ and $B(y^F)$ again induce vertices $O'$ and $F'$ in the new polytope. We show the corresponding $y^{O'}$ and $y^{F'}$ in Figure \ref{fig:perturbedassignments}. Clearly an application of $g$ cannot transfer $O'$ to $F'$ now, as $y_{11}^{O'} \neq y_{23}^{O'}$.

	\begin{figure}[H]
		\centering
			\begin{tikzpicture}[scale=0.35]
					\coordinate (c1) at (0,4);
					\coordinate (c2) at (0,2);
					\coordinate (x1) at	(5,5);	
					\coordinate (x2) at (5,3);							
					\coordinate (x3) at (5,1);
					\draw [fill, black] (c1) circle (0.1cm);
					\draw [fill, black] (c2) circle (0.1cm);
					\draw [fill, black] (x1) circle (0.1cm);
					\draw [fill, black] (x2) circle (0.1cm);
					\draw [fill, black] (x3) circle (0.1cm);
					\node[left] at (c1) {$3+\epsilon$};
					\node[left] at (c2) {$3+\epsilon$};
					\node[right] at (x1) {$2+\epsilon$};
					\node[right] at (x2) {$2+\epsilon$};
				  \node[right] at (x3) {$2$};
				  \draw (c1)--(x2);
				  \draw (c2)--(x2);
					\draw (c1)--(x1);
				  \draw (c2)--(x3);			  
				  \node at (3.4,5.2) {\small $2+\epsilon$};
					\node at (4,3.5) {$1$};
					\node at (3.4,2.1) {\small $1+\epsilon$};
					\node at (4,0.6) {$2$};					
					\node at (2.5,-0.8) {assignment $O'$};			
			\end{tikzpicture}
			\begin{tikzpicture}[scale=0.35]
					\coordinate (c1) at (0,4);
					\coordinate (c2) at (0,2);
					\coordinate (x1) at	(5,5);	
					\coordinate (x2) at (5,3);							
					\coordinate (x3) at (5,1);
					\draw [fill, black] (c1) circle (0.1cm);
					\draw [fill, black] (c2) circle (0.1cm);
					\draw [fill, black] (x1) circle (0.1cm);
					\draw [fill, black] (x2) circle (0.1cm);
					\draw [fill, black] (x3) circle (0.1cm);
					\node[left] at (c1) {$3+\epsilon$};
					\node[left] at (c2) {$3+\epsilon$};
					\node[right] at (x1) {$2+\epsilon$};
					\node[right] at (x2) {$2+\epsilon$};
				  \node[right] at (x3) {$2$};
				  \draw (c1)--(x3);
				  \draw (c1)--(x2);
				  \draw (c2)--(x1);
					\draw (c2)--(x2);					
					\node at (3.4,5.2) {\small $2+\epsilon$};
					\node at (4,3.6) {\small $1+\epsilon$};
					\node at (4,2.3) {$1$};
					\node at (4,0.9) {$2$};			
					\node at (2.5,-0.8) {assignment $F'$};											
			\end{tikzpicture}
		\caption{The assignments corresponding to $B(y^O), B(y^F)$ in the perturbed polytope.}\label{fig:perturbedassignments}.
	\end{figure} 
By this perturbation, we ruled out the possibility of having a `shortcut' via circuit $g$. In a larger example, one would now continue with a sequence of perturbations of smaller and smaller perturbation values, e.g{.} $\epsilon,\epsilon^2,\epsilon^3,\dots$, to rule out any further shorter circuit walks . Choosing $\epsilon$ sufficiently small, one does not reintroduce a shorter circuit walk at a later point.

Recalling Theorem \ref{thm:generalTP}, it remains to study both $\CD_{fm}$ and $\CD_e$. In the next sections, we do so for $m=2$ and $m=3$ (and arbitrary $n$).
\section{The circuit diameter for $2{\times}n$--transportation polytopes}

This section is dedicated to proving Theorem \ref{thm:CD_fm} on the circuit diameter $\CD_{fm}$. We do not exclude the degenerate case as it is not clear whether for every degenerate transportation polytope there is a perturbed non-degenerate transportation polytope bounding the circuit diameter of the original one. Therefore, the support graphs are not necessarily connected, in particular the vertices are described by forests instead of spanning trees. To prove $\CD_{fm}\leq n-1$ for all $2{\times}n$--transportation polytopes, we show that, for an assignment $O$, it always is possible to delete an edge in $O\backslash F$ while only inserting edges in $F$. 

For $2{\times}n$--transportation polytopes, we can describe a circuit step by two disjoint edges in the current assignment on which we want to \textit{decrease} flow, one edge $\{\supply_1,\demand_2\}$ incident to $\supply_1$ and one edge $\{\supply_2,\demand_1\}$ incident to $\supply_2$. This implies that we increase on $\{\supply_1,\demand_1\}$ and $\{\supply_2,\demand_2\}$ (dashed). The latter edges are inserted if they do not exist in the current assignment.
	\begin{figure}[H]
		\centering
			\begin{tikzpicture}[scale=0.35]
					\coordinate (c1) at (0,3);
					\coordinate (c2) at (0,0);										
					\coordinate (xj) at (5,0);
					\coordinate (xl) at (5,3);
					\draw [fill, black] (c1) circle (0.08cm);
					\draw [fill, black] (c2) circle (0.08cm);
					\draw [fill, black] (xj) circle (0.08cm);
					\draw [fill, black] (xl) circle (0.08cm);
					\node[left] at (c1) {$\supply_1$};
					\node[left] at (c2) {$\supply_2$};
					\node[right] at (xj) {$\demand_2$};	
					\node[right] at (xl) {$\demand_1$};		
				  \draw (c1)--(xj);
				  \draw (c2)--(xl);
				  \draw[dashed] (c1)--(xl);
				  \draw[dashed] (c2)--(xj);
			\end{tikzpicture} 	
	\end{figure}

\begin{lemma}\label{lem:cd2n}
Let  $O$ and $F$ be two vertices of a $2{\times}n$--transportation polytope. Then the circuit distance from $O$ to $F$ is at most $|O\setminus F|$. Further if $|F|=n$, then the circuit distance from $O$ to $F$ is at most $|O\setminus F|-1$.
\end{lemma}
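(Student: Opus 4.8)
The plan is to induct on $|O \setminus F|$, at each stage producing a single feasible maximal circuit step that turns $O$ into a new assignment $O'$ with $|O' \setminus F| \le |O \setminus F| - 1$ and $O' \cup F \subseteq $ something we control, so that no edge of $F$ is ever destroyed. Concretely, since we are in the $2 \times n$ case a circuit step is determined by choosing one edge $\{\supply_1, \demand_l\}$ and one disjoint edge $\{\supply_2, \demand_j\}$ in the current assignment to \emph{decrease}, which forces us to \emph{increase} (inserting if necessary) the edges $\{\supply_1, \demand_j\}$ and $\{\supply_2, \demand_l\}$. So the whole argument reduces to: \emph{show that we can always pick the pair of edges to decrease so that (i) both lie in $O \setminus F$, or at worst one lies in $O \setminus F$ and the maximal step still deletes an $O \setminus F$ edge, (ii) both edges to increase lie in $F$ (or already in $O$), and (iii) the maximal feasible step deletes at least one of the chosen edges.}

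First I would set up notation: let $\demand_{j_0}$ be the unique mixed demand of $F$ (exists and is unique for a vertex of a non-degenerate $2 \times n$ polytope; in the degenerate case $F$ is a forest and one handles components separately, but the key point is that every demand other than the mixed one is a leaf in $F$, attached to exactly one supply). Edges of $O \setminus F$ come in two flavours: those incident to a demand that is a leaf in $F$ but attached to the "wrong" supply, and the (at most two) edges incident to $\demand_{j_0}$. The strategy is to prioritize fixing leaf demands: if $\demand_l$ is a leaf demand of $F$ attached to $\supply_1$ in $F$ but $\{\supply_2, \demand_l\} \in O$, then I want to decrease $\{\supply_2, \demand_l\}$, which means the partner edge to decrease must be some $\{\supply_1, \demand_j\} \in O$; choosing $\demand_j$ to be (if possible) a demand that in $F$ is attached to $\supply_2$, the edge $\{\supply_2, \demand_j\}$ we insert/increase is in $F$, and $\{\supply_1, \demand_l\}$ we insert/increase is also in $F$. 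A maximal step then deletes whichever of $\{\supply_2,\demand_l\}, \{\supply_1,\demand_j\}$ carries the smaller flow — in any case an edge of $O\setminus F$ — so $|O' \setminus F|$ drops by at least one, and one checks no $F$-edge was decreased to zero since the only edges decreased are the two chosen ones, both in $O \setminus F$.

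The main obstacle — and the reason the "$|F|=n$" refinement appears — is the endgame when the only remaining edges of $O \setminus F$ are the (up to two) edges incident to the mixed demand $\demand_{j_0}$, so there is no "wrong leaf" left to serve as the partner. Here I would argue: once all leaf demands are correctly assigned, the support $O$ restricted to the rest must route all remaining flow through $\demand_{j_0}$ just as $F$ does, and a careful count shows $O$ and $F$ can then differ in at most... well, if $O \ne F$ they differ in a cycle through $\demand_{j_0}$, which a single circuit step (decreasing the two $O \setminus F$ edges at $\demand_{j_0}$, increasing the two $F$ edges at $\demand_{j_0}$) resolves in one maximal step. For the refinement, note that if $|F| = n$ then $F$ is a spanning \emph{star}-like tree with $\demand_{j_0}$ the only mixed demand of degree $2$ — actually $|F| = n$ forces $m + n - 1 = n + 1$, impossible for $m=2$ unless... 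I would instead read $|F| = n$ as: $F$ has exactly $n$ edges, i.e. it is the (degenerate) case where every demand is a leaf, hence $F$ is a perfect matching-like forest and the mixed structure is absent; then the last step, which fixes two leaf demands at once by a single circuit, can be chosen to also reach $F$ exactly, saving one step over the naive bound — so one runs the induction down to $|O \setminus F| = 2$ and finishes in one step rather than two. I would make this precise by a direct case analysis of the final one or two steps, which is where the real work lies; the bulk of the induction (reducing $|O\setminus F|$ by one per step whenever a mis-assigned leaf demand exists) is the routine part sketched above.
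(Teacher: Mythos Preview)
Your overall strategy matches the paper's: at each stage choose two edges to decrease, one at each supply, so that the inserted edges lie in $F$ and the maximal step kills at least one edge of $O\setminus F$. Your handling of the ``easy'' situation (both supplies still have edges in $O\setminus F$) is correct and essentially identical to the paper's Case~1.

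The gap is in your endgame analysis. You describe the obstacle as ``the only remaining edges of $O\setminus F$ are the (up to two) edges incident to the mixed demand $\demand_{j_0}$ [of $F$]'', and you propose to resolve it by a single circuit through $\demand_{j_0}$. But this situation is impossible: since $\demand_{j_0}$ is mixed in $F$, \emph{both} edges $\{\supply_1,\demand_{j_0}\}$ and $\{\supply_2,\demand_{j_0}\}$ lie in $F$, so neither can be in $O\setminus F$. The actual hard case is different: only one supply, say $\supply_2$, still has edges to delete, while every edge at $\supply_1$ in $O$ already lies in $F$. You then need a partner $\{\supply_1,\demand_j\}\in O$ to decrease, and the only candidate that allows the inserted edge $\{\supply_2,\demand_j\}$ to land in $F$ is the mixed edge $\{\supply_1,\demand_{j_0}\}$ itself, which lies in $O\cap F$. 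The missing argument is that the maximal step will \emph{not} delete this mixed edge: if it did, $\supply_1$ would afterwards have an edge to increase (namely $\{\supply_1,\demand_{j_0}\}$, which must be restored) but no edge left to decrease, a contradiction. This is the paper's Case~2, and without it your induction stalls precisely when one side runs out of deletable edges.

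Your reading of the $|F|=n$ refinement is correct after your initial hesitation: it is the degenerate case where $F$ has no mixed demand. The paper's argument for saving one step is to look at the \emph{last} circuit step and observe that if only one of the two decreased edges were deleted, the surviving one together with its newly inserted partner would make a mixed demand in $F$, contradicting $|F|=n$; hence the final step deletes two edges of $O\setminus F$ at once.
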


\begin{proof}
If $O\neq F$, there must be an edge in $O\backslash F$ that we have to delete.
We show that there is a circuit step that deletes such an edge in $O\backslash F$ and does not insert any edge not in $F$. 

\textbf{Case 1: } There are edges incident to both $\supply_1$ and $\supply_2$ to delete.\\ 
Let $\{\supply_1,\demand_j\}$ and $\{\supply_2,\demand_l\}$ be those edges. Apply the pivot that reduces flow on these particular edges. This deletes at least one of these edges and increases $\{\supply_1,\demand_l\}$ and $\{\supply_2,\demand_j\}$, both of which are in $F$.

\textbf{Case 2: } One supply node still has an edge to delete while the other does not. \\
Without loss of generality, let there be an edge incident to $\supply_2$ to delete, but no edges incident to $\supply_1$ to delete. Further let $\{\supply_2,\demand_l\}$ be this edge to delete. Then we have to increase $\{\supply_1,\demand_l\}$ and hence there is a mixed edge to decrease ($\{\supply_1,\demand_j\}$), but no edge to delete. Since $\supply_1$ may be incident to at most one mixed edge in $F$, this implies there are no other edges to decrease incident to $\supply_1$. We apply the pivot that decreases $\{\supply_1,\supply_j\}$ and $\{\supply_2,\demand_l\}$. Assume it would delete $\{\supply_1,\demand_j\}$. Then there would be no more edges incident to $\supply_1$ to decrease, but an edge to increase ($\{\supply_1,\demand_j\}$ would have to be inserted again). Hence $\{\supply_1,\demand_j\}$ is not deleted, and thus $\{\supply_2,\demand_l\}$ is the only edge that is deleted.\\

Note in both cases, an edge in $O\backslash F$ is deleted, and no edge in $F$ is ever deleted. So we will get that $|O\setminus F|=0$ after $|O\setminus F|$ of these circuit steps, at which point $O=F$.

Finally, if $|F|=n$, in order to show that the circuit distance is at most $|O\setminus F|-1$, we only need to show there is a circuit step that deletes two edges in $O\setminus F$ at once. Consider the last circuit step in our sequence of steps (dashed edges are increasing, solid are decreasing):
	\begin{figure}[H]
		\centering
			\begin{tikzpicture}[scale=0.35]
					\coordinate (c1) at (0,3);
					\coordinate (c2) at (0,0);										
					\coordinate (xj) at (5,0);
					\coordinate (xl) at (5,3);
					\draw [fill, black] (c1) circle (0.08cm);
					\draw [fill, black] (c2) circle (0.08cm);
					\draw [fill, black] (xj) circle (0.08cm);
					\draw [fill, black] (xl) circle (0.08cm);
					\node[left] at (c1) {$\supply_1$};
					\node[left] at (c2) {$\supply_2$};
					\node[right] at (xj) {$\demand_2$};	
					\node[right] at (xl) {$\demand_1$};		
				  \draw (c1)--(xj);
				  \draw (c2)--(xl);
				  \draw[dashed] (c1)--(xl);
				  \draw[dashed] (c2)--(xj);
			\end{tikzpicture} 	
	\end{figure}

Suppose that one of the edges being decreased was not deleted (without loss of generality say $\{\supply_1,\demand_2\}$). Then since this is the last circuit step we have $\{\supply_1,\demand_2\},\{\supply_2,\demand_2\}\in F$. This implies that $F$ has a mixed demand, $\demand_2$. However, since $|F|=n$ and $F$ is a spanning tree, it cannot have any mixed demands since there must be exactly one edge incident to each $\demand_i$, thus we have a contradiction. This implies that in the last circuit step, two edges from $O\setminus F$ are deleted if $|F|=n$, and thus combining this with the above argument we have the circuit distance from $O$ to $F$ is $|O\setminus F|-1$ if $|F|=n$.
\end{proof}

Now $\CD_{fm}\leq n-1$ is a consequence of the following simple observation.

\begin{lemma}\label{lem0}
Let $O$ and $F$ be two vertices of a (possibly degenerate) $2{\times}n$--transportation polytope $P$. Then either $|O\setminus F|\leq n-1$ or $|O\setminus F|=n$ and $|F|=n$
\end{lemma}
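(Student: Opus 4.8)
\textbf{Proof proposal for Lemma \ref{lem0}.}

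The plan is to count edges in both $O$ and $F$ carefully, using that both are vertices of a $2{\times}n$--transportation polytope and hence their support graphs are spanning forests (acyclic subgraphs of $K_{2,n}$ covering all $n+2$ nodes). First I would record the basic bound: any acyclic subgraph of $K_{2,n}$ on the full vertex set has at most $n+1$ edges, with equality exactly when the support graph is a spanning tree; in the $2{\times}n$ case a spanning tree has precisely one mixed demand (a demand of degree $2$) and the other $n-1$ demands are leaves. Conversely, $|F|=n$ forces $F$ to be a spanning forest with $n$ edges and $2$ connected components, so every demand node has degree exactly $1$ in $F$ — i.e., $F$ has no mixed demand. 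This is the dichotomy I will exploit.

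Next I would split on $|F|$. If $|F|\leq n-1$, then trivially $|O\setminus F|\leq |O|\leq n+1$; but I want the sharper bound $n-1$, so I need to be more careful. The cleanest route is to bound $|O\setminus F|$ by comparing the contributions at each demand node. For a fixed demand $\demand_i$, write $d_O(i)$ and $d_F(i)$ for its degrees in $O$ and $F$; the number of edges of $O$ at $\demand_i$ that are \emph{not} in $F$ is at most $d_O(i) - |N_1^O\cap N_1^F \text{ restricted to }\demand_i|$, and since both $O$ and $F$ are forests in $K_{2,n}$ we have $d_O(i),d_F(i)\in\{0,1,2\}$. The key point: if $d_O(i)=2$ (so $\demand_i$ is mixed in $O$) and $d_F(i)\geq 1$, then at least one of the two edges of $O$ at $\demand_i$ already lies in $F$, so $\demand_i$ contributes at most $1$ to $|O\setminus F|$; if $d_F(i)=0$ it contributes at most $2$. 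Summing over $i$ and using $\sum_i d_F(i) = |F|$, I would derive $|O\setminus F| \le |O| - (\text{number of demands with }d_F(i)\ge1) \le (n+1) - |F| + (\text{number of mixed demands of }F)$, and then case on whether $F$ has a mixed demand.

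The main obstacle — really the only delicate point — is pinning down the extremal configuration: $|O\setminus F|=n$ can only happen when $|O|=n+1$ (so $O$ is a spanning tree) and $F$ shares just one edge with $O$. I would argue that if $|F|\le n-1$ then $F$ misses at least two demand nodes' worth of coverage, or rather, one checks that $|O\setminus F| \le n-1$ whenever $|F| \le n-1$ by the counting above (the mixed-demand-of-$F$ correction term is at most $1$, and it is present only when $|F|=n+1$, in which case $|O\setminus F|\le (n+1)-(n+1)+1 = 1$; when $|F|=n$ there is no mixed demand so $|O\setminus F|\le (n+1)-n+0 = 1$... this is too strong and signals I've mis-accounted, so the real work is getting the inequality $|O\setminus F|\le |O|-\#\{i: d_F(i)\ge 1\}$ right, since a demand with $d_F(i)=1$ only kills an $O$-edge at $\demand_i$ if that particular edge is the one in $F$). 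The correct bookkeeping is: $|O\setminus F| = \sum_i (\text{$O$-edges at }\demand_i\text{ not in }F)$, and at $\demand_i$ this is $d_O(i)$ minus the number of common edges at $\demand_i$; when $d_O(i)=d_F(i)=1$ these edges may differ, contributing $1$. So the honest bound is $|O\setminus F|\le \sum_i \min(d_O(i), \text{something})$, and I would instead finish by the slicker argument: $|O\setminus F| = |O| - |O\cap F|$, and if $O\ne F$ with $|O\cap F|=0$ then since $O\cup F$ is a union of two edge-disjoint forests on $n+2$ nodes with no shared edge, a short parity/cycle argument (or direct case analysis on the at-most-two mixed demands) forces $|O|=n+1$, $|F|\le\ldots$; more simply, $|O\cap F|\ge |O|+|F|-(n+1)$ is false in general, so I would fall back to: if $|F|=n$ the conclusion is the stated alternative and there is nothing to prove, and if $|F|\le n-1$ then $|O\setminus F|\le |O|\le n+1$ is not enough, forcing me to use that whenever $|O|=n+1$ (spanning tree) $O$ has a mixed demand $\demand_j$ whose two edges cannot both avoid $F$ unless $F$ omits $\demand_j$, etc. I expect the final clean writeup to be a one-paragraph case analysis on $|F|\in\{n+1, n, \le n-1\}$ combined with "a spanning tree of $K_{2,n}$ has a mixed demand," and the subtlety is entirely in not over-counting common edges at degree-$1$ demands.
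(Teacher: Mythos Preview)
Your proposal has a genuine gap: you never arrive at a complete argument, and the reason is that you miss both of the two elementary facts that make the lemma a two-line exercise. First, $|F|\geq n$ always holds: every demand node $\demand_j$ has $v_j>0$, hence degree at least $1$ in the support graph of any feasible point, so summing degrees over the $n$ demand nodes gives at least $n$ edges. Your case ``$|F|\le n-1$'' is therefore vacuous, and much of your effort is spent on it. Second, the inclusion--exclusion bound you reach for, $|O\cap F|\ge |O|+|F|-(n+1)$, is indeed false, but the correct version is $|O\cap F|\ge |O|+|F|-2n$, coming simply from $|O\cup F|\le |E(K_{2,n})|=2n$. You had the right instinct to bound $|O\cup F|$ but used the tree bound $n+1$ (which applies to each of $O$, $F$ separately, not to their union) instead of the total edge count of the host graph.

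With these two facts the paper's proof is immediate: $|O\setminus F|=|O\cup F|-|F|\le 2n-|F|\le 2n-n=n$, and equality in the last step forces $|F|=n$. Your degree-by-degree accounting at each demand node is not wrong in spirit, but it is unnecessary here and, as you yourself noticed, it over- or under-counts unless you are very careful about whether the single $F$-edge at a leaf demand coincides with the $O$-edge there; the global counting via $|O\cup F|\le 2n$ sidesteps all of that.
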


\begin{proof}
Since $|O\setminus F|+|F|=|O\cup F|\leq 2n$, we have $|O\setminus F|\leq 2n-|F|\leq 2n-n=n$, since $|F|\geq n$ (a spanning forest in $K_{2,n}$ must have at least $n$ edges). Further if $|O\setminus F|=n$ then $|F|\leq 2n-|O\setminus F|=2n-n=n$ and since $|F|\geq n$ we have $|F|=n$ in this case.
\end{proof}

By Theorem \ref{thm:generalTP} we already know that this bound is tight for all $n$. We close our discussion with an explicit example with $\CD_{fm}=n-1$ for all $n$:

\begin{example} \label{ex:TP diameters coincide}
Consider the (non-degenerate) transportation polytope given by margins $u_1=u_2=2n-1$, $v_1=2n$, $v_j=2$ for $j=2,\ldots,n$. The two assignments below have circuit distance $n-1$ as every circuit step can insert at most one edge incident to $\supply_1$ and we have to add $n-1$ such edges.
	\begin{figure}[H] 
		\centering
			\begin{tikzpicture}[scale=0.35]
					\node at (2.5,-2.5) {\footnotesize assignment $O$};
					\coordinate (c1) at (0,6.5);
					\coordinate (c2) at (0,4.5);
					\coordinate (x1) at (5,6.5);
					\coordinate (x2) at (5,4.5);
					\coordinate (x3) at (5,3.5);
					\coordinate (x4) at (5,2.5);
					\coordinate (dots) at (5,1.5);
					\coordinate (x5) at (5,0);				
					\draw [fill, black] (c1) circle (0.08cm);
					\draw [fill, black] (c2) circle (0.08cm);
					\draw [fill, black] (x2) circle (0.08cm);					
				  \draw [fill, black] (x3) circle (0.08cm);					
				  \draw [fill, black] (x4) circle (0.08cm);					
				  \draw [fill, black] (x5) circle (0.08cm);					
				  \draw [fill, black] (x1) circle (0.08cm);					
				  \node[left] at (c1) {$2n-1$};
				  \node[left] at (c2) {$2n-1$};
				  \node[right] at (x1) {$2n$};
				  \node[right] at (x2) {$2$};
				  \node[right] at (x3) {$2$};
				  \node[right] at (x4) {$2$};
				  \node[right] at (x5) {$2$};
				  \node at (dots) {\vdots};
				  \draw (c1)--(x1);
				  \draw (c2)--(x1);
				  \draw (c2)--(x2);
				  \draw (c2)--(x3);
				  \draw (c2)--(x4);
				  \draw (c2)--(x5);
			\end{tikzpicture}
		\hspace{4em}
			\begin{tikzpicture}[scale=0.35]
					\node at (2.5,-2.5) {\footnotesize assignment $F$};
					\coordinate (c1) at (0,6.5);
					\coordinate (c2) at (0,4.5);
					\coordinate (x1) at (5,6.5);
					\coordinate (x2) at (5,4.5);
					\coordinate (x3) at (5,3.5);
					\coordinate (x4) at (5,2.5);
					\coordinate (dots) at (5,1.5);
					\coordinate (x5) at (5,0);				
					\draw [fill, black] (c1) circle (0.08cm);
					\draw [fill, black] (c2) circle (0.08cm);
					\draw [fill, black] (x2) circle (0.08cm);					
				  \draw [fill, black] (x3) circle (0.08cm);					
				  \draw [fill, black] (x4) circle (0.08cm);					
				  \draw [fill, black] (x5) circle (0.08cm);					
				  \draw [fill, black] (x1) circle (0.08cm);					
				  \node[left] at (c1) {$2n-1$};
				  \node[left] at (c2) {$2n-1$};
				  \node[right] at (x1) {$2n$};
				  \node[right] at (x2) {$2$};
				  \node[right] at (x3) {$2$};
				  \node[right] at (x4) {$2$};
				  \node[right] at (x5) {$2$};
				  \node at (dots) {\vdots};
				  \draw (c1)--(x1);
				  \draw (c2)--(x1);
				  \draw (c1)--(x2);
				  \draw (c1)--(x3);
				  \draw (c1)--(x4);
				  \draw (c1)--(x5);
			\end{tikzpicture}
	\end{figure}				
Hence this transportation polytope has circuit diameter equal to $n-1$. We also have graph diameter $n-1$, as $\{\supply_1,\demand_1\}$ and $\{\supply_2,\demand_1\}$ are critical edges.
\end{example}


By a similar analysis, one can also prove that for vertices $O$ and $F$ of a $3{\times}n$--transportation polytope the circuit distance $\CD_{fm}$ from $O$ to $F$ is at most $|O\backslash F|$. The analysis becomes more involved, as the circuits are not characterized as easily as in the $2{\times}n$ case anymore.
\vspace*{1cm}
\section{The graph diameter for $2{\times}n$-- and $3{\times}n$--transportation polytopes}

In this section, we prove the monotone Hirsch conjecture with a bound of $n$ for $2{\times}n$--transportation polytopes and the Hirsch conjecture for $3{\times}n$--transportation polytopes. We split the proofs into small parts, beginning with a marking system that is at the core of our approach.

\subsection{A marking system for the graph diameter}

The key parts of our proofs of the graph diameters for $2{\times}n$-- and $3{\times}n$--transportation polytopes will be based on a marking system. During the walk from an assignment $O$ to an assignment $F$, we distinguish marked and unmarked edges for the current assignment. 
\begin{itemize}
    \item \textbf{Unmarked edges}: May be deleted.
    \item \textbf{Marked edges}: Must not be deleted. For every marked edge $\{\supply_i,\demand_j\}$, either
 \begin{enumerate}
            \item $\demand_j$ is a leaf demand in $F$, or
            \item $\demand_j$ is a mixed demand in $F$ and all leaf edges (in $F$) incident to $\supply_i$ already exist in $O$ and are marked.
        \end{enumerate}
\end{itemize}
The general idea is that after at most one pivot we may always mark an edge in our current assignment. Throughout the whole process we do not delete any marked edges. Thus, we will need at most $|F|$ steps to get to the final assignment. This approach proves an upper bound of $|F|$ on the combinatorial diameter of $2{\times}n$-- and $3{\times}n$--transportation polytopes. By refining these arguments we show the Hirsch conjecture for $m=2,3$ and in the case of $2{\times}n$--transportation polytopes improve the diameter bound by one to $n$.

Before starting with the proofs, let us outline some general conventions, situations, and arguments that frequently appear in our analysis.

In the sketches throughout this section, marked edges are drawn in bold, while unmarked edges are drawn plainly. Edges that are possibly marked are depicted as a plain line with a dashed bold line over it. 

We construct edge-walks, so we do not have to distinguish between assignments and assignments that are vertices. For convenience, we continue to say `assignments'. When talking about `mixed' or `leaf' edges without refering to a specific assignment, we always mean the edges are mixed or leaves in the final assignment $F$. For example, `$\supply_i$ has all its leaf edges' means that all leaf edges in $F$ that are incident to $\supply_i$ in $F$ also exist in $O$.

Recall that $\{\supply_i,\demand_j\}$ is an edge to increase if $y^O_{ij}<y^F_{ij}$ and an edge to decrease if $y^O_{ij}>y^F_{ij}$. Clearly, if there is an edge to increase incident to a node $\supply_i$ or $\demand_j$ in $O$, there also must be an edge incident to this node to decrease and vice versa. Further observe that edges to increase must be edges to insert or edges that are mixed in $O$, while edges to decrease are edges we have to delete or edges that exist in $O$ and are mixed in $F$. These principles are frequently used in our proofs.

Note also if there exist marked edges $\{\supply_{i_1},\demand_j\}, \{\supply_{i_2},\demand_j\}\in{O}$, then $\supply_{i_1}$ and $\supply_{i_2}$ already have all their leaf demands, as $\{\supply_{i_1},\demand_j\}$ and $\{\supply_{i_2},\demand_j\}$ are marked and thus in $F$. In particular these edges are mixed in $F$. But these mixed edges can only be marked if all leaf edges incident to $\supply_{i_1}$ and $\supply_{i_2}$ already exist in $O$. In particular this implies that $O=F$ if all edges that are mixed in $O$ are marked.


Before turning to the $2{\times}n$-- and $3{\times}n$ cases, we present a lemma which will be useful for completing our proofs in many configurations. We state it only for these cases, but it is readily extendable to general $m{\times}n$--transportation polytopes.

In this lemma, when we describe an assignment as `marked', we assume it was obtained by the rules described above. By saying that `an edge $e\in E_m^O$ is an even number of edges away from $\supply_i$ in $E_m^O$', we state that the path with edges in $E_m^O$ ending with $e$ and starting at node $\supply_i$ has an even number of edges.

\begin{lemma}\label{marklem} Let $O\neq F$ be two assignments in a non-degenerate $2{\times}n$-- or $3{\times}n$--transportation polytope, with $O$ partially marked. Then if there exists some $\supply_i$, such that all marked edges in $E_m^O$ are an even number of edges away from $\supply_i$ in $E_m^O$, then after (at most) one pivot, we may mark some edge in $O$. 

\end{lemma}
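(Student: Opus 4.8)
\textbf{Proof plan for Lemma~\ref{marklem}.}

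The plan is to produce a pivot that creates a new marked edge by carefully choosing which edge to \emph{insert} and then arguing that the edge that gets deleted is one we are willing to delete (i.e.\ an unmarked edge that is not in $F$). Since $O\neq F$, there is an edge to insert, say $\{\supply_{i_0},\demand_{j_0}\}\in F\setminus O$; the natural first move is to insert a leaf edge of $F$ if one is missing. First I would dispose of the easy case: if some leaf edge $\{\supply_i,\demand_j\}\in F$ is missing from $O$, insert it. This closes a unique cycle (by Proposition~\ref{prop2} / the support-graph characterization), we pivot along it, and by non-degeneracy exactly one edge is deleted. The inserted edge satisfies condition~(1) of the marking rule ($\demand_j$ is a leaf demand in $F$), so it may be marked; it remains only to check that the deleted edge is not marked. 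Here the hypothesis on $\supply_i$ is what we exploit: the parity condition on $E_m^O$ controls the structure of the cycle created, and I would argue that when we insert a leaf edge the deleted edge can always be taken to be a non-marked edge (a leaf edge of $O$ not in $F$, or an unmarked mixed edge), because the cycle must alternately traverse $E_m^O$ and the new leaf, and the even-distance condition prevents the cycle from being forced to delete a marked mixed edge.

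The remaining case is that $O$ already contains all leaf edges of $F$; then $F\setminus O$ consists only of mixed edges. Since $O\neq F$, some mixed edge $\{\supply_i,\demand_j\}\in F\setminus O$ is missing. I would insert this mixed edge. Because $O$ contains all leaf edges of $F$, in particular all leaf edges incident to $\supply_i$ already exist in $O$; if additionally they are all marked, then the inserted mixed edge satisfies condition~(2) and can itself be marked — again provided the pivot deletes an unmarked edge not in $F$. If some leaf edge incident to $\supply_i$ is present but not yet marked, I would instead mark \emph{that} leaf edge (it is a leaf in $F$, so condition~(1) applies) — but this needs no pivot at all, so to make the lemma's "at most one pivot" meaningful I would first observe that in the marked-assignment bookkeeping every leaf edge of $F$ already present in $O$ is marked as soon as it appears, so this sub-case does not actually arise under the stated conventions. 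Thus the substantive content is: insert the missing mixed edge, pivot, and mark it.

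The key structural tool in both cases is the parity/alternation argument on $E_m^O$. In a $2{\times}n$-- or $3{\times}n$--transportation polytope the mixed-edge subgraph $E_m^O$ is small and path-like (from the remarks preceding the lemma, $|E_m^O|\le 4$), so the unique cycle closed by inserting $\{\supply_i,\demand_j\}$ either (a) uses none of the edges being decreased toward $F$ and then, by choosing step length equal to the minimum, deletes an edge in $O\setminus F$ — done — or (b) runs through part of $E_m^O$, and the hypothesis that all marked edges of $E_m^O$ sit at even distance from $\supply_i$ means that when we alternately increase/decrease along the cycle starting at the inserted edge, the decreased edges on the cycle land on unmarked positions. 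I would make this precise by tracing the cycle edge by edge: starting at $\supply_i$, the inserted edge is the first "increase" edge, so the edges at odd position in the cycle are increased and those at even position are decreased; since every marked edge in $E_m^O$ is at even distance from $\supply_i$, none of the decreased edges is a marked mixed edge, and any decreased leaf edge is, by a short argument, either in $F$ (hence fine to keep, but then it is not the deleted one) or not in $F$ (hence an edge in $O\setminus F$ we are happy to delete).

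The main obstacle I anticipate is precisely the bookkeeping in step~(b): ruling out the possibility that the \emph{only} edge the pivot is allowed to delete (the one attaining the minimum flow along the cycle) is a marked edge. This is where the even-distance hypothesis must be used in full, and where the case distinction between $2{\times}n$ and $3{\times}n$ (one mixed demand versus two) has real bite — with two mixed demands the cycle through $E_m^O$ can be longer, so I would handle $|E_m^O|=4$ separately, checking each of the few possible positions of the inserted mixed edge relative to the path $E_m^O$ and using that at most the marked edges are at even distance. I expect the $2{\times}n$ sub-case to be essentially immediate (there $|E_m^O|=2$ and the cycle has length $4$), and the $3{\times}n$ sub-case with $|D_m^O|=2$ to require the bulk of the verification.
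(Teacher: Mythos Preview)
Your overall architecture is right and matches the paper's: always insert an edge incident to the distinguished supply $\supply_i$, and use the parity hypothesis to see that marked edges of $E_m^O$ sit on the increasing side of the resulting cycle. But there is a genuine gap: the parity argument only protects marked edges \emph{inside} $E_m^O$. A marked leaf edge of $O$ (a leaf of $F$ already present and marked) may well lie on the cycle as a decreasing edge, and nothing in your alternation argument rules out its deletion. Your sentence ``any decreased leaf edge is \dots\ either in $F$ (hence fine to keep, but then it is not the deleted one)'' is precisely the unjustified step: being in $F$ does not by itself prevent an edge from being the minimum on the cycle. The paper handles this not by parity but by separate \emph{increase/decrease counting} arguments at the relevant supply node (``otherwise $\supply_k$ would have an edge to decrease but none to increase''), applied case by case.

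Two further points where your decomposition diverges from what is needed. First, your second case (``$O$ already contains all leaf edges of $F$'') is the wrong split: what matters is whether $\supply_i$ has all \emph{its} leaf edges, not whether $O$ has all leaf edges of $F$; and you omit the sub-case where a mixed edge $\{\supply_i,\demand_j\}\in F$ is already present in $O$ and can simply be marked with no pivot. Second, and more seriously, when $\supply_i$ has two mixed edges of $F$ still to insert (this occurs only for $3{\times}n$), it is not enough to insert ``the'' missing mixed edge: the choice between the two matters. The paper spends most of its effort here, locating the path in $E_m^O$, distinguishing whether the neighbouring $\demand_j$ already carries a marked edge, and in some sub-cases inserting $\{\supply_i,\demand_{j'}\}$ rather than $\{\supply_i,\demand_j\}$ to avoid deleting a marked leaf. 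Your plan to ``handle $|E_m^O|=4$ separately'' is pointed at the right place, but the actual work is this careful selection plus the increase/decrease contradictions, not a refinement of the parity argument.
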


\begin{proof} The condition that all marked edges in $E_m^O$ are an even number of edges away from $\supply_i$ in $E_m^O$ implies that inserting any edge incident to $\supply_i$ will never delete a marked edge in $E_m^O$, since they will always be increasing on that pivot. Thus, when inserting any edge, we only have to worry about deleting marked edges not in $E_m^O$. We proceed then as follows (starting from the top):

\begin{enumerate}
    \item If there is an unmarked leaf edge $\{\supply_i,\demand_j\}$, we insert it (if necessary) and mark it. Since this is a leaf edge in $F$, the decreasing edge incident to $\demand_j$ is not marked.

    \item Else if $\{\supply_i,\demand_j\}\in E_m^F\cap O$, since $\supply_i$ has all its leaf edges, we mark it.

    \item Else if there is only one mixed edge $\{\supply_i,\demand_j\}\in F$ to insert, this implies $\supply_i$ has all its leaf edges and is incident to only one mixed edge in $F$ (else its other mixed edge would already have been inserted and either we would have applied the step $(2)$, or it would have already been marked and the lemma could not have been applied). We insert and mark the edge $\{\supply_i,\demand_j\}$.

If this pivot deleted a marked edge $\{\supply_k,\demand_j\}$ (the edge incident to $\demand_j$ in $O$ and in our pivot), then $\demand_j$ is a leaf in $O$ (otherwise there would be a marked edge an even number of edges away from $\supply_i$ in $E_m^O$). Then $\supply_i$ would still be incident to an edge to decrease ($\{\supply_i,\demand_j\}$, since it is now a leaf, but mixed in $F$), but would be incident to no edge to increase, since it has all its leaves and has only one mixed edge in $F$. Hence no marked edge will be deleted.

\item Else, since by step (2), $\supply_i$ is incident to no edges in $O\cap E_m^F$, and is incident to some unmarked edge in $E_m^O$ by assumption, it must have at least one edge to delete and one to insert. Since $\supply_i$ has all its leaf edges and more than one mixed edge to insert, by steps (1) and (3), we know that $\supply_i$ has two mixed edges incident to it to insert, $\{\supply_i,\demand_j\}$ for $j=1,2$ (Note this does not happen in the $2{\times}n$ case). 

If for some $j$, $\demand_j$ is not incident to a marked edge, we insert $\{\supply_i,\demand_j\}$ and mark it. This will not delete a marked edge

	Else, both $\demand_j$ are incident to marked edges, which must necessarily be mixed in $F$ since $\demand_j$ are mixed in $F$. Without loss of generality let these edges be $\{\supply_j,\demand_j\}$ for $j=1,2$ and let $\supply_i=\supply_3$. Note this implies both $\supply_j$ already have all their leaf edges marked, since they are incident to a marked mixed edge. Then for some $j$, we must have the following configuration in $O$ with $\{\supply_j,\demand_k\},\{\supply_3,\demand_k\}\in E_m^O$.
 \begin{figure}[H]
        \centering
            \begin{tikzpicture}[scale=0.35]
                    \coordinate (cj) at (0,4);
                    \coordinate (xk) at (4,2);
                    \coordinate (c3) at (0,0);
                    \coordinate (xj) at (-4,2);
                    \draw [fill, black] (cj) circle (0.1cm);
                    \draw [fill, black] (xk) circle (0.1cm);
                    \draw [fill, black] (c3) circle (0.1cm);
                    \draw [fill, black] (xj) circle (0.1cm);
                    \node[above] at (cj) {$\supply_j$};
                    \node[left] at (c3) {$\supply_3$};
                    \node[left] at (xj) {$\demand_j$};
                    \node[right] at (xk) {$\demand_k$};
                    \draw[ultra thick, dashed](cj)--(xk);
		  \draw (cj)--(xk);
                    \draw(c3) --(xk);
                    \draw[ultra thick](cj) --(xj);
            \end{tikzpicture}
        \end{figure}
 The existence of such a path is trivial to see, since $O$ is connected. The marking patterns of this configuration will be one of the following:
 \begin{figure}[H]
        \centering
	\subfloat[$\{\supply_j,\demand_k\}$ marked]{
            \begin{tikzpicture}[scale=0.35]
                    \coordinate (cj) at (0,4);
                    \coordinate (xk) at (4,2);
                    \coordinate (c3) at (0,0);
                    \coordinate (xj) at (-4,2);
                    \draw [fill, black] (cj) circle (0.1cm);
                    \draw [fill, black] (xk) circle (0.1cm);
                    \draw [fill, black] (c3) circle (0.1cm);
                    \draw [fill, black] (xj) circle (0.1cm);
                    \node[above] at (cj) {$\supply_j$};
                    \node[left] at (c3) {$\supply_3$};
                    \node[left] at (xj) {$\demand_j$};
                    \node[right] at (xk) {$\demand_k$};
                    \draw[ultra thick](cj)--(xk);
                    \draw(c3) --(xk);
                    \draw[ultra thick](cj) --(xj);
            \end{tikzpicture}}
\hspace{2em}
	\subfloat[$\{\supply_j,\demand_k\}$ unmarked]{
            \begin{tikzpicture}[scale=0.35]
                    \coordinate (cj) at (0,2.5);
                    \coordinate (xk) at (4,2.5);
                    \coordinate (c3) at (0,0);
                    \coordinate (xj) at (-4,1.5);
		\coordinate (cjj) at (0,5);
		\coordinate (xjj) at (-4,4);
                    \draw [fill, black] (cj) circle (0.1cm);
                    \draw [fill, black] (xk) circle (0.1cm);
                    \draw [fill, black] (c3) circle (0.1cm);
                    \draw [fill, black] (xj) circle (0.1cm);
                    \draw [fill, black] (xjj) circle (0.1cm);
                    \draw [fill, black] (cjj) circle (0.1cm);
                    \node[above] at (cj) {$\supply_j$};
                    \node[left] at (c3) {$\supply_3$};
                    \node[left] at (xj) {$\demand_j$};
                    \node[right] at (xk) {$\demand_k$};
                    \node[left] at (xjj) {$\demand_{j'}$};
                    \node[above] at (cjj) {$\supply_{j'}$};
		  \draw (cj)--(xk);
                    \draw(c3) --(xk);
                    \draw[ultra thick](cj) --(xj);
		\draw[ultra thick](xk)--(cjj);
		\draw[ultra thick](cjj)--(xjj);

            \end{tikzpicture}}
        \end{figure}

The fact that these are the only two possibilities follows since all our leaf edges are already inserted and marked. If $\{\supply_j,\demand_k\}$ is not marked, then $\demand_k$ must be a leaf demand in $F$ and hence $\{\supply_{j'},\demand_k\}$ must be a leaf edge in $F$ and therefore must already be in $O$ and marked. Because these are the only possibilities, we may assume without loss of generality that $\{\supply_j,\demand_k\}$ is marked (otherwise we could have swapped the roles of $j$ and $j'$ when choosing our path).


If $\supply_j$ is incident to no other edges in $E_m^O$ besides $\{\supply_j,\demand_k\}$, inserting $\{\supply_3,\demand_j\}$ will not delete $\{\supply_j,\demand_j\}$ as otherwise $\supply_j$ would have edges to increase ($\{\supply_j,\demand_j\}$ since it is mixed in $F$), but no edges to decrease (as all leaf edges incident to $\supply_j$ are marked). Hence we insert and mark $\{\supply_3,\demand_j\}$.

Else, $\supply_j$ is incident to one more edges in $E_m^O$ (which cannot be marked since it is an odd number of edges away from $\supply_3$), so $O$ looks like:
\begin{figure}[H]
        \centering
	\subfloat[If $\demand_q\neq \demand_{j'}$]{
            \begin{tikzpicture}[scale=0.35]
		\coordinate (cjj) at (0,4);
		\coordinate (xjj) at (-4,3);
		\coordinate (xq) at (4,3);
                    \coordinate (cj) at (0,2);
                    \coordinate (xk) at (4,1);
                    \coordinate (c3) at (0,0);
                    \coordinate (xj) at (-4,1);
                    \draw [fill, black] (cj) circle (0.1cm);
                    \draw [fill, black] (xk) circle (0.1cm);
                    \draw [fill, black] (c3) circle (0.1cm);
                    \draw [fill, black] (xj) circle (0.1cm);
                    \draw [fill, black] (cjj) circle (0.1cm);
                    \draw [fill, black] (xjj) circle (0.1cm);
                    \draw [fill, black] (xq) circle (0.1cm);
                    \node[above] at (cj) {$\supply_j$};
                    \node[left] at (c3) {$\supply_3$};
                    \node[left] at (xj) {$\demand_j$};
                    \node[right] at (xk) {$\demand_k$};
                    \node[above] at (cjj) {$\supply_{j'}$};
                    \node[left] at (xjj) {$\demand_{j'}$};
                    \node[right] at (xq) {$\demand_q$};
                    \draw[ultra thick](cj)--(xk);
                    \draw(c3) --(xk);
                    \draw[ultra thick](cj) --(xj);
                    \draw[ultra thick](cjj)--(xjj);
                    \draw[ultra thick](cjj)--(xq);
                    \draw(cj)--(xq);
            \end{tikzpicture}}
\hspace{2em}
\subfloat[If $\demand_q=\demand_{j'}$]{
\begin{tikzpicture}[scale=0.35]
		\coordinate (cjj) at (0,4);
		\coordinate (xjj) at (4,3);
                    \coordinate (cj) at (0,2);
                    \coordinate (xk) at (4,1);
                    \coordinate (c3) at (0,0);
                    \coordinate (xj) at (-4,1);
                    \draw [fill, black] (cj) circle (0.1cm);
                    \draw [fill, black] (xk) circle (0.1cm);
                    \draw [fill, black] (c3) circle (0.1cm);
                    \draw [fill, black] (xj) circle (0.1cm);
                    \draw [fill, black] (cjj) circle (0.1cm);
                    \draw [fill, black] (xjj) circle (0.1cm);
                    \node[above] at (cj) {$\supply_j$};
                    \node[above] at (c3) {$\supply_3$};
                    \node[left] at (xj) {$\demand_j$};
                    \node[right] at (xk) {$\demand_k$};
                    \node[above] at (cjj) {$\supply_{j'}$};
                    \node[right] at (xjj) {$\demand_{j'}$};
                    \draw[ultra thick](cj)--(xk);
                    \draw(c3) --(xk);
                    \draw[ultra thick](cj) --(xj);
                    \draw[ultra thick](cjj)--(xjj);
                    \draw[ultra thick](cjj)--(xq);
                    \draw(cj)--(xq);
            \end{tikzpicture}}
        \end{figure}
 In the first case we insert and mark the edge $\{\supply_3,\demand_{j'}\}$, and here we again know that $\{\supply_{j'},\demand_{j'}\}$ will not be deleted since otherwise $\supply_{j'}$ would have an edge to increase, but none to decrease, since $\demand_q$ must be a leaf. In the second case we insert and mark the edge $\{\supply_3,\demand_{j'}\}$, and clearly no marked edges will be deleted.
\end{enumerate}

This proves the claim.
\end{proof}

The proof of Lemma \ref{marklem} essentially is an algorithm to decide which pivot to use. We refer to using this algorithm as \emph{applying Lemma \ref{marklem}}.
\subsection{$2 {\times}n$--transportation polytopes}
\noindent 
 Validity of the Hirsch conjecture for a $2{\times}n$--transportation polytope implies a general upper bound of $n+1$ on its graph diameter (or $n+1-k$, where $k$ is the number of critical edges in the polytope). In this section, we prove that the diameter of $2{\times}n$--transportation polytopes is actually bounded by $n$ and is tight in the sense that there is a $2{\times}n$--transporation polytope that has diameter $n$ for all $n\geq 3$. In the next section we then show that the corresponding edge walk satisfies the monotone Hirsch conjecture. 


Let $O \neq F$ be two assignments of a non-degenerate $2{\times}n$--transportation polytope. (Recall that it suffices to consider non-degenerate polytopes for the graph diameter.) Lemma \ref{lem0} also tells us that $O$ and $F$ differ by at most $n-1$ edges in this non-degenerate case. In fact, there are transportation polytopes with assignments $O$ and $F$ such that $|O\backslash F|=n-1$, as exhibited by Example \ref{ex:TP diameters coincide}.

Recall Example \ref{THEex1} from Section $2$. It illustrates a situation in which we cannot decrease the edge distance with every pivot, such that we have $\CD^O_e=3>2=|O\backslash F|$. In particular, in the first step, no matter which edge in $F\backslash O$ we insert (dashed edges), we have to delete an edge that is contained in $F$. Observe that the edge deleted is a mixed edge in both $O$ and $F$ and we have $D_m^O= D_m^F$. In our proof of the Hirsch conjecture for $2{\times}n$--transportation polytopes this will be the situation we have to take special care of. 

	\begin{figure}[H]
		\centering
			\begin{tikzpicture}[scale=0.35]
					\coordinate (c1) at (0,4);
					\coordinate (c2) at (0,2);
					\coordinate (x1) at	(5,5);	
					\coordinate (x2) at (5,3);							
					\coordinate (x3) at (5,1);
					\draw [fill, black] (c1) circle (0.1cm);
					\draw [fill, black] (c2) circle (0.1cm);
					\draw [fill, black] (x1) circle (0.1cm);
					\draw [fill, black] (x2) circle (0.1cm);
					\draw [fill, black] (x3) circle (0.1cm);
					\node[left] at (c1) {$3$};
					\node[left] at (c2) {$3$};
					\node[right] at (x1) {$2$};
					\node[right] at (x2) {$2$};
				  \node[right] at (x3) {$2$};
				  \draw (c1)--(x2);
				  \draw (c2)--(x2);
					\draw (c1)--(x1);
				  \draw (c2)--(x3);			  
				  \node at (4,5.2) {$2$};
					\node at (4,3.5) {$1$};
					\node at (4,2.3) {$1$};
					\node at (4,0.6) {$2$};					
					\node at (2.5,-0.8) {assignment $O$};			
			\end{tikzpicture}
			\begin{tikzpicture}[scale=0.35]
					\coordinate (c1) at (0,4);
					\coordinate (c2) at (0,2);
					\coordinate (x1) at	(5,5);	
					\coordinate (x2) at (5,3);							
					\coordinate (x3) at (5,1);
					\draw [fill, black] (c1) circle (0.1cm);
					\draw [fill, black] (c2) circle (0.1cm);
					\draw [fill, black] (x1) circle (0.1cm);
					\draw [fill, black] (x2) circle (0.1cm);
					\draw [fill, black] (x3) circle (0.1cm);
					\node[left] at (c1) {$3$};
					\node[left] at (c2) {$3$};
					\node[right] at (x1) {$2$};
					\node[right] at (x2) {$2$};
				  \node[right] at (x3) {$2$};
				  \draw[very thick]  (c1)--(x2);
				  \draw[very thick]  (c2)--(x2);
				  \draw (2.5,3.8)--(2.5,3.2);
					\draw (c1)--(x1);
				  \draw[very thick]  (c2)--(x3);
				  \draw[dashed, very thick] (c1)--(x3);				  
				  \node at (4,5.2) {$2$};
					\node at (4,3.5) {$1$};
					\node at (4,2.3) {$1$};
					\node at (4,0.6) {$2$};	
					\node at (2.5,-0.8) {pivot $1$};										
			\end{tikzpicture}
			\begin{tikzpicture}[scale=0.35]
					\coordinate (c1) at (0,4);
					\coordinate (c2) at (0,2);
					\coordinate (x1) at	(5,5);	
					\coordinate (x2) at (5,3);							
					\coordinate (x3) at (5,1);
					\draw [fill, black] (c1) circle (0.1cm);
					\draw [fill, black] (c2) circle (0.1cm);
					\draw [fill, black] (x1) circle (0.1cm);
					\draw [fill, black] (x2) circle (0.1cm);
					\draw [fill, black] (x3) circle (0.1cm);
					\node[left] at (c1) {$3$};
					\node[left] at (c2) {$3$};
					\node[right] at (x1) {$2$};
					\node[right] at (x2) {$2$};
				  \node[right] at (x3) {$2$};
				  \draw[very thick]  (c1)--(x1);
				  \draw[dashed, very thick] (c2)--(x1);
				  \draw[very thick]  (c1)--(x3);
				  \draw (c2)--(x2);
					\draw[very thick]  (c2)--(x3);
					\draw (2.5,1.8)--(2.5,1.2);					
					\node at (4,5.3) {$2$};
					\node at (4,3.2) {$2$};
					\node at (4,2.0) {$1$};
					\node at (4,0.6) {$1$};
					\node at (2.5,-0.8) {pivot $2$};										
			\end{tikzpicture}
			\begin{tikzpicture}[scale=0.35]
					\coordinate (c1) at (0,4);
					\coordinate (c2) at (0,2);
					\coordinate (x1) at	(5,5);	
					\coordinate (x2) at (5,3);							
					\coordinate (x3) at (5,1);
					\draw [fill, black] (c1) circle (0.1cm);
					\draw [fill, black] (c2) circle (0.1cm);
					\draw [fill, black] (x1) circle (0.1cm);
					\draw [fill, black] (x2) circle (0.1cm);
					\draw [fill, black] (x3) circle (0.1cm);
					\node[left] at (c1) {$3$};
					\node[left] at (c2) {$3$};
					\node[right] at (x1) {$2$};
					\node[right] at (x2) {$2$};
				  \node[right] at (x3) {$2$};
				  \draw[very thick]  (c1)--(x1);
				  \draw (c1)--(x3);
				  \draw[very thick]  (c2)--(x1);
					\draw[very thick]  (c2)--(x2);
					\draw[dashed, very thick] (c1)--(x2);
					\draw (2.5,4.8)--(2.5,4.2);
					\node at (4,5.2) {$1$};
					\node at (4,3.7) {$1$};
					\node at (4,2.3) {$2$};
					\node at (4,0.8) {$2$};
					\node at (2.5,-0.8) {pivot $3$};											
			\end{tikzpicture}
			\begin{tikzpicture}[scale=0.35]
					\coordinate (c1) at (0,4);
					\coordinate (c2) at (0,2);
					\coordinate (x1) at	(5,5);	
					\coordinate (x2) at (5,3);							
					\coordinate (x3) at (5,1);
					\draw [fill, black] (c1) circle (0.1cm);
					\draw [fill, black] (c2) circle (0.1cm);
					\draw [fill, black] (x1) circle (0.1cm);
					\draw [fill, black] (x2) circle (0.1cm);
					\draw [fill, black] (x3) circle (0.1cm);
					\node[left] at (c1) {$3$};
					\node[left] at (c2) {$3$};
					\node[right] at (x1) {$2$};
					\node[right] at (x2) {$2$};
				  \node[right] at (x3) {$2$};
				  \draw (c1)--(x3);
				  \draw (c1)--(x2);
				  \draw (c2)--(x1);
					\draw (c2)--(x2);					
					\node at (4,5.2) {$2$};
					\node at (4,3.6) {$1$};
					\node at (4,2.3) {$1$};
					\node at (4,0.9) {$2$};			
					\node at (2.5,-0.8) {assignment $F$};											
			\end{tikzpicture}
			\end{figure} 

A similar example can be constructed to see that there are $2{\times}n$--transportation polytopes with diameter at least $n$ for all $n\geq 3$.

\begin{example}
Consider an instance of a transportation problem with margins $u_1=u_2= 2n-3$, $v_1=2n-4$, and $v_j=2$ for all $j=2,\ldots, n$. This yields a non-degenerate transportation polytope. Consider the following assignments $O$ and $F$. 
	\begin{figure}[H] 
		\centering
		\subfloat[assignment $O$]{
			\begin{tikzpicture}[scale=0.35]
					\coordinate (c1) at (0,6.5);
					\coordinate (c2) at (0,4.5);
					\coordinate (x1) at (5,7.5);
					\coordinate (x2) at (5,5.5);
					\coordinate (x3) at (5,3.5);
					\coordinate (x4) at (5,2.5);
					\coordinate (dots) at (5,1.5);
					\coordinate (x5) at (5,0);				
					\draw [fill, black] (c1) circle (0.1cm);
					\draw [fill, black] (c2) circle (0.1cm);
					\draw [fill, black] (x2) circle (0.1cm);					
				  \draw [fill, black] (x3) circle (0.1cm);					
				  \draw [fill, black] (x4) circle (0.1cm);					
				  \draw [fill, black] (x5) circle (0.1cm);					
				  \draw [fill, black] (x1) circle (0.1cm);					
				  \node[left] at (c1) {$2n-3$};
				  \node[left] at (c2) {$2n-3$};
				  \node[right] at (x1) {$2n-4$};
				  \node[right] at (x2) {$2$};
				  \node[right] at (x3) {$2$};
				  \node[right] at (x4) {$2$};
				  \node[right] at (x5) {$2$};
				  \node at (dots) {\vdots};
				  \draw[very thick] (c1)--(x2);
				  \draw[very thick] (c2)--(x2);
				  \draw (c1)--(x1);
				  \draw (c2)--(x3);
				  \draw (c2)--(x4);
				  \draw (c2)--(x5);
			\end{tikzpicture}
		}
		\hspace{2em}
		\subfloat[assignment $F$]{
			\begin{tikzpicture}[scale=0.35]
					\coordinate (c1) at (0,6.5);
					\coordinate (c2) at (0,4.5);
					\coordinate (x1) at (5,7.5);
					\coordinate (x2) at (5,5.5);
					\coordinate (x3) at (5,3.5);
					\coordinate (x4) at (5,2.5);
					\coordinate (dots) at (5,1.5);
					\coordinate (x5) at (5,0);				
					\draw [fill, black] (c1) circle (0.1cm);
					\draw [fill, black] (c2) circle (0.1cm);
					\draw [fill, black] (x2) circle (0.1cm);					
				  \draw [fill, black] (x3) circle (0.1cm);					
				  \draw [fill, black] (x4) circle (0.1cm);					
				  \draw [fill, black] (x5) circle (0.1cm);					
				  \draw [fill, black] (x1) circle (0.1cm);					
				  \node[left] at (c1) {$2n-3$};
				  \node[left] at (c2) {$2n-3$};
				  \node[right] at (x1) {$2n-4$};
				  \node[right] at (x2) {$2$};
				  \node[right] at (x3) {$2$};
				  \node[right] at (x4) {$2$};
				  \node[right] at (x5) {$2$};
				  \node at (dots) {\vdots};
				  \draw[very thick] (c1)--(x2);
				  \draw[very thick] (c2)--(x2);
				  \draw (c2)--(x1);
				  \draw (c1)--(x3);
				  \draw (c1)--(x4);
				  \draw (c1)--(x5);
			\end{tikzpicture}
		}			
	\end{figure}				
Inserting any edge into $O$ creates a new assignment $C$ with $|C\backslash F|=n-1$ since the edge deleted will always be in $O\cap F$. Since $\CD_e^O=\CD_e^{C}+1$ for some $C$ we have that $\CD_e^O=\CD_e^{C}+1\geq |C\backslash F| + 1= (n-1) + 1 = n.$
\end{example}

The above example tells us that the upper bound of $n$ we prove in the following is tight for all $n\geq 3$ in the sense that there are margins such that the diameter is $n$. We first need a lemma about our marking system in the $2{\times}n$ case before proving the upper bound of $n$.

\begin{lemma}\label{2nmark} Let $O\neq F$ be two assignments in a non-degenerate $2{\times}n$-transportation polytope and let $O$ be partially marked. Suppose there is some edge $e\in O\cap E_m^F$ such that either $e$ is a leaf edge in $O$, or $E_m^O=E_m^F$ and the other mixed edge in $O$ is marked. Then applying Lemma \ref{marklem} to any $\supply_i$ will not delete $e$.
\end{lemma}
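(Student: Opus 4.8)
The plan is to run through the decision procedure of Lemma \ref{marklem} (``applying Lemma \ref{marklem}''), specialised to the $2{\times}n$ case, and to check in every branch that $e$ is not deleted. The starting observation is that a non-degenerate $2{\times}n$--assignment has a unique mixed demand; write $\demand_m$ for that of $O$, so $E_m^O=\{\{\supply_1,\demand_m\},\{\supply_2,\demand_m\}\}$. Whenever the procedure applied to a node $\supply_i$ actually performs a pivot, it inserts an edge $\{\supply_i,\demand_x\}\notin O$ with $\demand_x\neq\demand_m$; since $O$ is a spanning tree of $K_{2,n}$, the closed cycle is $\supply_i-\demand_x-\supply_{i'}-\demand_m-\supply_i$, and the two decreased edges --- exactly one of which vanishes, by non-degeneracy --- are the $O$-leaf edge $\{\supply_{i'},\demand_x\}$ and the $O$-mixed edge $\{\supply_i,\demand_m\}$. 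Hence $e$ can be deleted only if it equals one of these two edges, and the claim then amounts to showing that $e$ does not attain the minimum in that pivot.

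In Case~(a), $e=\{\supply_p,\demand_{m'}\}$ with $\demand_{m'}$ a leaf of $O$ and the mixed demand of $F$, so $\demand_{m'}\neq\demand_m$ and $e\notin E_m^O$. Applying the procedure to $\supply_p$: Steps $1$ and $3$ decrease edges incident only to a leaf demand of $F$ (hence $\neq\demand_{m'}$) or to $\demand_m$, and Step $2$ merely marks $e$ without pivoting, so $e$ is untouched. Applying it to $\supply_{p'}$: the only pivot decreasing $e$ is the Step~$3$ pivot inserting the mixed-in-$F$ edge $\{\supply_{p'},\demand_{m'}\}$, reached only once all $F$-leaf edges of $\supply_{p'}$ lie in $O$ and are marked. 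I would argue that the $O$-mixed edge $\{\supply_{p'},\demand_m\}$ cannot then be a marked $F$-leaf edge of $\supply_{p'}$ (otherwise it is a marked edge of $E_m^O$ an odd number of edges from $\supply_{p'}$, contradicting applicability of Lemma \ref{marklem}); hence $\demand_m$ is a leaf of $\supply_p$ in $F$, and so $\supply_{p'}$ carries only $F$-leaf demands that are already $O$-leaf demands. Comparing the margin of $\supply_{p'}$ computed from $O$ and from $F$ then shows that the $O$-flow on $\{\supply_{p'},\demand_m\}$ is at most the $F$-flow on $\{\supply_{p'},\demand_{m'}\}$, which is strictly below $v_{m'}$, the $O$-flow on $e$. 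So the minimum is attained at $\{\supply_{p'},\demand_m\}$, which is deleted, and $e$ survives.

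In Case~(b), $E_m^O=E_m^F$ with common mixed demand $\demand_m$, and the other mixed edge $e''=\{\supply_{p'},\demand_m\}$ of $O$ is marked. Since $e''$ is a marked edge of $E_m^O$ one edge from $\supply_{p'}$, the procedure is applicable only to $\supply_p$ (if $e$ too is marked it is applicable to neither node and there is nothing to prove). Applied to $\supply_p$, Step~$3$ would have to insert a mixed-in-$F$ edge at $\supply_p$, but the unique such edge is $e\in O$, so Step~$3$ is not reached, and Step~$2$ merely marks $e$. The one branch to rule out is a Step~$1$ pivot inserting a leaf edge $\{\supply_p,\demand_x\}\notin O$ of $F$, whose decreased edges are $\{\supply_{p'},\demand_x\}$ --- carrying $v_x$ in $O$, since $\demand_x$ is then an $O$-leaf of $\supply_{p'}$ --- and $e$. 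Here I would use that $e''$ marked forces every $F$-leaf demand of $\supply_{p'}$ to be an $O$-leaf demand of $\supply_{p'}$, equivalently every $O$-leaf demand of $\supply_p$ to be an $F$-leaf demand of $\supply_p$; the margin equation at $\supply_p$ then expresses the $O$-flow on $e$ as $y^F_{pm}$ plus the demands of the $F$-leaves newly acquired by $\supply_p$, a set containing $\demand_x$, so the $O$-flow on $e$ is at least $v_x+y^F_{pm}>v_x$. Thus the minimum is attained at $\{\supply_{p'},\demand_x\}$ and $e$ survives.

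The genuinely non-formal content is concentrated in the two pivots that actually decrease $e$ --- Step~$3$ in Case~(a) and Step~$1$ in Case~(b) --- where one must pin down which of the two decreased edges hits zero; I expect this bookkeeping, together with keeping the evenness hypothesis of Lemma \ref{marklem} synchronised with the marking of the edges of $E_m^O$, to be the main difficulty. The uniform leverage is that $e\in E_m^F$ keeps $y^F$ strictly positive on $e$, while the hypothesis pins the ``stationary'' supply node to already carrying all of its final leaf demands, so that --- via the margin equations --- the competing $O$-leaf edge is strictly shorter than $e$ and, by non-degeneracy, is the edge that gets deleted.
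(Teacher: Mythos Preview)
Your argument is correct, but it takes a noticeably different route from the paper. You identify the two pivots that can decrease $e$ and then compare the two decreasing flow values explicitly via the margin equations at $\supply_{p'}$ (Case~(a)) and $\supply_p$ (Case~(b)), concluding that the competing edge carries strictly less flow and is therefore the one deleted. This works and is fully self-contained.

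The paper instead argues by contradiction using the ``increase/decrease'' balance principle stated in the preliminaries. In both configurations it observes that the supply node $\supply_1$ opposite to $e$ already has all of its $F$-leaf edges (either because a marked mixed edge sits there, or because we are in Step~3 of Lemma~\ref{marklem}). If $e$ were deleted, the mixed demand of $F$ would become a leaf of $\supply_1$ in the new assignment, so $\{\supply_1,\demand_1\}$ would be an edge to decrease; but $\supply_1$ then has no edge to increase, which is impossible. This avoids any numerical comparison of flows and dispatches both cases in one line. Your approach buys explicitness and independence from that balance principle, at the cost of the bookkeeping you anticipated; the paper's approach buys brevity by reusing a tool it has already set up.
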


\begin{proof} Let $e=\{\supply_2,\demand_1\}$. If $e$ is a leaf edge in $O$, then if Lemma \ref{marklem} inserts a leaf edge in $F$, clearly $e$ will not be deleted as it is not part of the pivot. Otherwise we either have $E_m^O=E_m^F$ with $\{\supply_1,\demand_1\}\in E_m^F$ marked, or we are inserting $\{\supply_1,\demand_1\}$ and $\{\supply_2,\demand_1\}$ is a leaf edge in $O$. Note in the case that $E_m^O=E_m^F$, we must be applying Lemma \ref{marklem} to $\supply_2$, since $\supply_1$ has a marked edge incident to it in the mixed part of $O$.
      \begin{figure}[H]
\centering
	\subfloat[Inserting $\{\supply_1,\demand_1\}$]{
           \begin{tikzpicture}[scale=0.39]
		\coordinate (c1) at (0,4);
		\coordinate (c2) at (0,0);
		\coordinate (x1) at (4,2);
		\coordinate (x2) at (-4,2);
		\draw[fill, black] (c1) circle (0.1cm);
		\draw[fill, black] (c2) circle (0.1cm);
		\draw[fill, black] (x1) circle (0.1cm);
		\draw[fill, black] (x2) circle (0.1cm);
		\node[left] at (c1) {$\supply_1$};
		\node[left] at (c2) {$\supply_2$};
		\node[right] at (x1) {$\demand_1$};
		\node[left] at (x2) {$\demand_2$};
		\draw[dashed] (c1)--(x1);
		\draw (c2)--(x1);
		\draw (c2)--(x2);
		\draw (c1)--(x2);
		\draw[dashed, ultra thick] (c2)--(x2);
	\end{tikzpicture}}
\hspace{3em}
	\subfloat[Inserting an edge when $E_m^O=E_m^F$]{
           \begin{tikzpicture}[scale=0.39]
		\coordinate (c1) at (0,4); 
		\coordinate (c2) at (0,0);
		\coordinate (x1) at (4,2);
		\coordinate (x2) at (-4,2);
		\draw[fill, black] (c1) circle (0.1cm);
		\draw[fill, black] (c2) circle (0.1cm);
		\draw[fill, black] (x1) circle (0.1cm);
		\draw[fill, black] (x2) circle (0.1cm);
		\node[left] at (c1) {$\supply_1$};
		\node[left] at (c2) {$\supply_2$};
		\node[right] at (x1) {$\demand_1$};
		\node[left] at (x2) {$\demand_2$};
		\draw[ultra thick] (c1)--(x1);
		\draw (c2)--(x1);
		\draw (c1)--(x2);
		\draw[dashed] (c2)--(x2);
	\end{tikzpicture}}
        \end{figure}
Note in both configurations we know that $\supply_1$ must have all its leaf edges (since it either has a marked mixed edge or one is being inserted). Therefore if Lemma \ref{marklem} has us inserting some edge (all such cases shown above), $\{\supply_2,\demand_1\}$ will not be deleted. Otherwise, $\supply_1$ would have an edge to decrease ($\{\supply_1,\demand_1\}$) but none to increase.
\end{proof}

\begin{lemma}\label{thm2n}
The diameter of a $2{\times}n$--transportation polytope is at most $n$.
\end{lemma}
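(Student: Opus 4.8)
The plan is to produce an edge walk from an arbitrary vertex $O$ to an arbitrary vertex $F$ of a non-degenerate $2{\times}n$--transportation polytope of length at most $n$, using the marking system. By Lemma \ref{lem0}, in the non-degenerate case $|O\setminus F|\le n-1$; together with the fact that $|F|\le n$, the target we really need is: reach $F$ using at most $|F|$ pivots, and in fact save one pivot whenever $|F|=n$, or more generally whenever $|O\setminus F|<|F|$. The driving invariant is that once an edge is marked it is never deleted, and after at most one pivot we can mark a new edge; since a spanning tree $F$ has $|F|=n$ edges, $n$ pivots suffice to mark (hence fix) every edge of $F$, at which point the assignment equals $F$. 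The subtlety, which is exactly the $n$ versus $n+1$ improvement, is to show that the \emph{first} pivot can already be chosen to mark \emph{two} edges, or equivalently to reach in $|F|-1$ pivots — i.e. we need one ``free'' marking at the start.

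First I would set up the marking bookkeeping. At the start nothing is marked. In each step, I apply Lemma \ref{marklem}: since $m=2$ here, $E_m^O$ consists of exactly the two edges at the unique mixed demand of $O$, so $E_m^O$ is a path of length two through that demand; picking $\supply_i$ to be one of the two supply nodes, every marked edge in $E_m^O$ is an even number of edges (namely $0$ or $2$) away from $\supply_i$ precisely when the marked mixed edge, if any, is the one incident to $\supply_i$. So as long as at most one of the two current mixed edges is marked, Lemma \ref{marklem} applies with the appropriate choice of $\supply_i$, letting us mark an additional edge after at most one pivot. The process can only fail to continue when \emph{both} current mixed edges are marked; but by the observation recorded before Lemma \ref{marklem}, if both edges at a mixed demand are marked then both supply nodes have all their leaf edges present and marked, and then every edge that is mixed in $O$ is marked, which by that same observation forces $O=F$. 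Hence the marking process runs until $O=F$, giving an a priori bound of $|F|\le n$ pivots — but I still need to shave off one.

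To get the bound down to $n$ I would argue that the very first application can mark two edges at once, or else that $|O\setminus F|\le |F|-1$ and the argument naturally terminates one step early. Concretely: if $|O\setminus F|=0$ we are done with $0$ pivots; otherwise $|O\setminus F|\ge 1$ and since $|O\setminus F|+|F|=|O\cup F|\le 2n$ while $|F|\le n$, we have slack unless $|F|=n$ and $|O\setminus F|=n$. In the generic case $|O\setminus F|\le |F|-1$, and because each pivot marks at least one new edge of $F$ while never deleting a marked edge, after $|O\setminus F|\le n-1\le n$ pivots all edges to delete are gone, i.e. $O=F$. The remaining case $|F|=n$ (so $F$ is a spanning tree with no mixed demand — every $\demand_j$ a leaf) and $|O\setminus F|=n$: here $O$ has the single mixed demand $\demand_1$ (say), and among the $n$ edges of $F$, all are leaf edges; the first pivot inserts some leaf edge $\{\supply_i,\demand_j\}\in F$ and, crucially, this same pivot \emph{deletes} the mixed edge at $\demand_1$ on the side it decreases — which is an edge of $O\setminus F$ — while the increasing side at $\demand_1$ either was already present or is also an edge of $F$ we can simultaneously mark; I would check that this first pivot removes two edges of $O\setminus F$, so after it $|O\setminus F|=n-2$ and we then need only $n-2$ further pivots, for a total of $n-1<n$. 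In all cases the total is at most $n$. I would also note (for use in Theorem \ref{thm:2n}) that Lemma \ref{2nmark} guarantees the marked edge $e\in O\cap E_m^F$ is never destroyed along the way, which is what makes the ``mixed demand equals mixed demand'' situation of Example \ref{THEex1} harmless.

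The main obstacle I expect is the careful case analysis of that first pivot when $|O\setminus F|=|F|=n$: one must verify that a single well-chosen pivot really eliminates two edges of $O\setminus F$ (not just one), and that this choice is compatible with the marking invariants so that the subsequent $n-2$ pivots from Lemma \ref{marklem} go through without ever being blocked (i.e. we never reach a configuration where both mixed edges are forced to be marked before $O=F$). A secondary but routine point is confirming that Lemma \ref{marklem}'s hypothesis — all marked mixed edges an even distance from the chosen $\supply_i$ — is maintainable throughout, which in the $2{\times}n$ case reduces to the elementary parity fact that a path of length two has its two ends at even distance from either endpoint, so at most one of the two mixed edges can ever be an obstruction and we simply apply the lemma to the other supply node.
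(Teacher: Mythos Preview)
Your plan has the right skeleton (mark one edge per pivot via Lemma \ref{marklem}, then save one pivot), but it rests on a miscount that breaks the argument. A vertex of a non-degenerate $2{\times}n$--transportation polytope is a spanning tree of $K_{2,n}$, which has $n+2$ nodes and hence $n+1$ edges, not $n$. So the a priori bound from ``one marking per pivot'' is $|F|=n+1$, not $n$, and your ``remaining case $|F|=n$ with no mixed demand'' simply does not occur in the non-degenerate setting you are working in.

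With $|F|=n+1$, the improvement to $n$ genuinely requires exhibiting one marking that costs no pivot, and your attempt to get this via $|O\setminus F|$ does not go through. The marking procedure only guarantees that the \emph{inserted} edge lies in $F$ and is henceforth protected; it does not guarantee that the \emph{deleted} edge lies in $O\setminus F$. An unmarked edge of $O\cap F$ can be the one removed --- this is exactly what happens in the first pivot of Example \ref{THEex1} --- so $|O\setminus F|$ need not drop at each step, and the claim ``after $|O\setminus F|\le n-1$ pivots all edges to delete are gone'' is unjustified. The paper instead argues from $|O\cap F|\ge 2$: either some leaf edge of $F$ is already present in $O$ (mark it for free at the outset), or else both shared edges are mixed in $F$, forcing $E_m^O=E_m^F$. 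In that second case Lemma \ref{2nmark} --- which you mention only as an afterthought for Theorem \ref{thm:2n} --- is precisely the device that protects one of the two shared mixed edges throughout the entire walk, so that it can eventually be marked without a pivot. That protection argument is the missing idea in your proposal.
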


\begin{proof} Starting with a partially marked assignment $O$, we will show that after (at most) one pivot we may obtain a new assignment (which we will also call $O$) where we can mark one edge in $F$. If $O$ is partially marked, its set of mixed edges looks like one of the following cases:
      \begin{figure}[H]
\centering
	\subfloat{
           \begin{tikzpicture}[scale=0.35]
		\coordinate (c1) at (0,4);
		\coordinate (c2) at (0,0);
		\coordinate (x1) at (4,2);
		\draw[fill, black] (c1) circle (0.1cm);
		\draw[fill, black] (c2) circle (0.1cm);
		\draw[fill, black] (x1) circle (0.1cm);
		\node[left] at (c1) {$\supply_1$};
		\node[left] at (c2) {$\supply_2$};
		\node[right] at (x1) {$\demand_1$};
		\draw (c1)--(x1);
		\draw (c2)--(x1);
	\end{tikzpicture}}
\hspace{3em}
	\subfloat{
           \begin{tikzpicture}[scale=0.35]
		\coordinate (c1) at (0,4);
		\coordinate (c2) at (0,0);
		\coordinate (x1) at (4,2);
		\draw[fill, black] (c1) circle (0.1cm);
		\draw[fill, black] (c2) circle (0.1cm);
		\draw[fill, black] (x1) circle (0.1cm);
		\node[left] at (c1) {$\supply_1$};
		\node[left] at (c2) {$\supply_2$};
		\node[right] at (x1) {$\demand_1$};
		\draw[ultra thick] (c1)--(x1);
		\draw (c2)--(x1);
	\end{tikzpicture}}
\hspace{3em}
	\subfloat{
           \begin{tikzpicture}[scale=0.35]
		\coordinate (c1) at (0,4);
		\coordinate (c2) at (0,0);
		\coordinate (x1) at (4,2);
		\draw[fill, black] (c1) circle (0.1cm);
		\draw[fill, black] (c2) circle (0.1cm);
		\draw[fill, black] (x1) circle (0.1cm);
		\node[left] at (c1) {$\supply_1$};
		\node[left] at (c2) {$\supply_2$};
		\node[right] at (x1) {$\demand_1$};
		\draw[ultra thick] (c1)--(x1);
		\draw[ultra thick] (c2)--(x1);
	\end{tikzpicture}}
        \end{figure}
In the first two cases, we may apply Lemma \ref{marklem} to $\supply_2$ to obtain a marking after (at most) one pivot. In the third case, since both these edges are marked they must be mixed in $F$ and hence $E_m^O=E_m^F$ and since these mixed edges are marked we know that $\supply_1$ and $\supply_2$ both have all their leaf edges in $F$ and hence $O=F$.

As a consequence, this implies the diameter is at most $n+1$, since there are $n+1$ edges to mark in $F$ and each takes at most one pivot. To show the diameter is in fact $n$, we only need to show there will always be one edge in $F$ where we do not have to apply a pivot before marking it.
Note if there is a leaf edge $e\in F$ such that $e\in O$ when we start our marking process, we can mark $e$ without a pivot and our proof is complete. Otherwise, no such edge exists. Since we know $|O\cap F|\geq 2$ ($|O\cap F|=|O|+|F|-|O\cup F|\geq 2(n+1)-2n=2$ since $|O\cup F|\leq 2n$), this implies that these assignments share at least two mixed edges in $F$ and hence $E_m^O=E_m^F$. 

Let $E_m^O=E_m^F=\{e_1,e_2\}$. If neither $e_1$ nor $e_2$ is ever deleted when we perform our pivots, then clearly we will be able to mark them both without performing a pivot and the diameter of the associated polytope will be at most $n-1$. If one of them is deleted at some time during our process, without loss of generality say $e_1$, then $e_2$ will become a leaf edge. Thus, by Lemma \ref{2nmark}, $e_2$ will never be deleted after this, as when we apply Lemma \ref{marklem} $e_2$ will either be a leaf edge or once it is a mixed edge again, $e_1$ will be marked. Hence we will eventually be able to mark $e_2$ in this case without making a pivot and the diameter of the associated polytope is at most $n$. Thus we complete the proof.
\end{proof}

The proof of Lemma \ref{thm2n} actually tells us that the diameter is bounded above by $\min\{n,n+1-k\}$, where $k$ is the number of critical edges in the polytope, as such edges can never be deleted. As a consequence, the polytope satisfies the Hirsch conjecture with an upper bound of $n$, as stated in Theorem \ref{thm:2n}.

\subsection{Validity of the monotone Hirsch conjecture}

For an $m{\times}n$--transportation polytope $P$, the monotone Hirsch conjecture may be stated as follows:

\emph{Given a cost vector $s=(s_{11},\dots,s_{n1},\dots,s_{mn})^T\in \mathbb{R}^{m\times n}$ and a vertex $y^O$ of $P$, there always is an edge walk of length at most $m+n-1$ from $y^O$ to a vertex $y^F=\arg\max\limits_{y\in P} s^Ty$ that visits vertices in a sequence of nondecreasing objective function values.}

 We refer to the diameter of a polytope with respect to such a nondecreasing sequence of objective function values as the \emph{monotone diameter}. We prove that the edge walk constructed in the previous section yields such a sequence.

For $2{\times}n$--transportation polytopes, the vector $s=(s_{11},\dots,s_{n1},\dots,s_{2n})^T\in \mathbb{R}^{2n}$ already tells us what $y^F$ looks like.

\begin{lemma}\label{lem:whatdoesylooklike}
Let $s\in \mathbb{R}^{2n}$ satisfy $s_{1i}-s_{2i}\geq s_{1(i+1)}-s_{2(i+1)}$ for all $i\leq n-1$ and let $j$ be a maximal index such that $\sum\limits_{i=1}^{j-1} v_i<u_1$. Then the assignment $y^F$ defined by
\begin{itemize}
\item  $y^F_{1i}=v_i$ for $i<j$, $y^F_{1j}=u_1-\sum\limits_{i=1}^{j-1} v_i$, and $y^F_{1i}=0$ for $i>j,$
\item $y^F_{2i}=0$ for $i<j$, $y^F_{2j}=v_j-y_{1j}$, and $y^F_{2i}=v_i$ for $i>j$,
\end{itemize}
 is an optimizer for $\max s^Ty$.
\end{lemma}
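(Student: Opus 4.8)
The plan is to eliminate the second row of variables and reduce the optimization to a one-dimensional greedy (fractional-knapsack) problem. Since every feasible $y$ satisfies the column equation $y_{1i}+y_{2i}=v_i$, I would substitute $y_{2i}=v_i-y_{1i}$ and rewrite
\[
s^Ty \;=\; \sum_{i=1}^n\bigl(s_{1i}y_{1i}+s_{2i}(v_i-y_{1i})\bigr) \;=\; \sum_{i=1}^n s_{2i}v_i \;+\; \sum_{i=1}^n d_i\,y_{1i},
\]
where $d_i:=s_{1i}-s_{2i}$. The hypothesis says exactly that $d_1\ge d_2\ge\cdots\ge d_n$, and the first sum is a constant independent of $y$. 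Hence maximizing $s^Ty$ over $P$ is equivalent to maximizing $\sum_i d_i x_i$ over $x=(y_{11},\dots,y_{1n})$ subject to $\sum_i x_i=u_1$ and $0\le x_i\le v_i$ (the latter being $y_{1i}\ge0$ together with $y_{2i}\ge 0$); note $\sum_i v_i=u_1+u_2\ge u_1$, so this region is nonempty.

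Next I would check that $y^F$ is well defined and feasible. The index set $\{\,i:\sum_{\ell<i}v_\ell<u_1\,\}$ contains $1$ (as $u_1>0$) and is contained in $\{1,\dots,n\}$, so a maximal $j$ exists. By maximality of $j$ one has $\sum_{\ell\le j}v_\ell\ge u_1$ when $j<n$, while for $j=n$ trivially $\sum_{\ell\le n}v_\ell=u_1+u_2\ge u_1$; in either case $y^F_{1j}=u_1-\sum_{\ell<j}v_\ell\in(0,v_j]$, so $y^F_{2j}=v_j-y^F_{1j}\ge0$. A direct check gives $\sum_i y^F_{1i}=u_1$, $\sum_i y^F_{2i}=u_2$, and $y^F_{1i}+y^F_{2i}=v_i$, so $y^F\in P$. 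Moreover, by construction $\sum_{i=1}^k y^F_{1i}=\min\{u_1,\sum_{i=1}^k v_i\}$ for every $k$, which is the crucial property for the optimality step.

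For optimality I would run a summation-by-parts (exchange) argument. Fix any feasible $y$ and set $\delta_i=y^F_{1i}-y_{1i}$ and $S_k=\sum_{i=1}^k\delta_i$, so $S_0=S_n=0$ since both $y$ and $y^F$ have first-row sum $u_1$. Abel summation then gives
\[
\sum_{i=1}^n d_i\,\delta_i \;=\; \sum_{i=1}^{n-1}(d_i-d_{i+1})\,S_i .
\]
Every factor $d_i-d_{i+1}\ge0$ by hypothesis, and
\[
S_k \;=\; \sum_{i=1}^k y^F_{1i}-\sum_{i=1}^k y_{1i} \;=\; \min\Bigl\{u_1,\textstyle\sum_{i=1}^k v_i\Bigr\}-\sum_{i=1}^k y_{1i}\;\ge\;0,
\]
because any feasible $y$ satisfies $\sum_{i=1}^k y_{1i}\le\sum_{i=1}^n y_{1i}=u_1$ and $\sum_{i=1}^k y_{1i}\le\sum_{i=1}^k v_i$. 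Hence $\sum_i d_i y^F_{1i}\ge\sum_i d_i y_{1i}$, so $s^Ty^F\ge s^Ty$, proving that $y^F$ is an optimizer.

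I expect the only delicate points to be bookkeeping: confirming that $j$ is well defined, that $\sum_{\ell\le j}v_\ell\ge u_1$ in all cases (including $j=n$ and the boundary case $\sum_{\ell\le j}v_\ell=u_1$, i.e.\ $y^F_{2j}=0$), and keeping the index shifts in the Abel summation consistent. None of these is a genuine obstacle; the substance of the argument is the single observation that since $d_i$ is non-increasing, an optimal solution must front-load the first row as much as the budget $u_1$ allows, which is precisely the description of $y^F$. (In the non-degenerate case one additionally has $y^F_{2j}>0$, so the support of $y^F$ is a spanning tree and $y^F$ is a vertex, which is what the subsequent marking argument needs.)
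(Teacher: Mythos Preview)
Your proof is correct, but it takes a genuinely different route from the paper's. The paper argues via LP optimality conditions: it observes that $y^F$ is a vertex (using non-degeneracy), and then checks that every simplex pivot at $y^F$ has non-positive reduced cost, namely $(s_{2i}-s_{1i})+(s_{1j}-s_{2j})\le 0$ for $i<j$ and $(s_{1i}-s_{2i})+(s_{2j}-s_{1j})\le 0$ for $i>j$, both immediate from the ordering of the $d_i$. Your argument instead eliminates the second row, reduces to a one-dimensional budgeted problem in the $y_{1i}$, and proves optimality directly by Abel summation against an arbitrary feasible $y$. The paper's approach is shorter and fits the simplex-method viewpoint used throughout the section (and makes the later reduced-cost computations in Lemmas~\ref{lem:fracnondecreasing} and~\ref{lem:intnondecreasing} look like natural companions). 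Your approach is more self-contained: it does not appeal to LP optimality theory, and it establishes $s^Ty^F\ge s^Ty$ for \emph{every} feasible $y$ without first needing $y^F$ to be a vertex; non-degeneracy is only used afterwards to conclude that $y^F$ is a vertex for the marking argument.
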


\begin{proof}
The index $j$ and the values $y^F_{1j}, y^F_{2j}$ are well-defined, as there is no index $j'$ with $\sum\limits_{i=1}^{j'} v_i =u_1$ due to non-degeneracy of the transportation polytope. $\demand_j$ is the unique mixed demand in the assignment.

It suffices to prove that the reduced costs of all pivots possible at $y^F$ are non-positive. Such a pivot corresponds to inserting an edge to a demand $\demand_i$ for $i\neq j$. Noting that all $\demand_i$ belong to $\supply_1$ for $i<j$ and belong to $\supply_2$ for $i> j$ shows that the reduced costs satisfy $$(s_{2i}-s_{1i}) + (s_{1j}-s_{2j}) \leq 0 \text{ due to } s_{1j}-s_{2j} \leq s_{1i}-s_{2i} \text{ for } i <j$$
and $$(s_{1i}-s_{2i}) + (s_{2j}-s_{1j}) \leq 0 \text{ due to } s_{1j}-s_{2j} \geq s_{1i}-s_{2i} \text{ for } i >j.$$

\end{proof}
Note that the condition $s_{1i}-s_{2i}\geq s_{1(i+1)}-s_{2(i+1)}$  for all $i\leq n-1$ is no restriction, as it can simply be achieved by reindexing the supply nodes $\supply_i$. 


We now prove that there is a sequence of nondecreasing objective function values corresponding to the construction in the previous section. We split this proof into two parts, one for $D_m^O=D_m^F$ (which implies $E_m^O=E_m^F$ for $2{\times}n$--transportation polytopes) and one for $D_m^O\neq D_m^F$. We begin with $D_m^O=D_m^F$.

\begin{lemma}\label{lem:fracnondecreasing}
Let $s\in \mathbb{R}^{2n}$, let $O\neq F$, let $y^F$ be maximal for $s^Ty$ and let finally $D_m^O=D_m^F$. Then any pivot inserting an edge in $F\backslash O$ is non-decreasing. 
\end{lemma}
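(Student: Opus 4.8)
The plan is to exploit the explicit description of $y^F$ given by Lemma~\ref{lem:whatdoesylooklike}, together with the hypothesis $D_m^O = D_m^F$, to control the sign of the reduced cost of every pivot used in the construction of Lemma~\ref{thm2n}. First I would reindex the supply nodes so that $s_{1i} - s_{2i} \geq s_{1(i+1)} - s_{2(i+1)}$ for all $i \leq n-1$, which is harmless. Then Lemma~\ref{lem:whatdoesylooklike} applies: there is a unique mixed demand $\demand_j$, and for $i < j$ the demand $\demand_i$ is served entirely by $\supply_1$ in $F$, while for $i > j$ it is served entirely by $\supply_2$ in $F$. Since $D_m^O = D_m^F = \{\demand_j\}$, the mixed demand is the same in $O$, so $E_m^O = E_m^F = \{\{\supply_1,\demand_j\},\{\supply_2,\demand_j\}\}$, and every demand $\demand_i$ with $i \neq j$ is a leaf in both $O$ and $F$.

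The key step is to show that a pivot inserting an edge $e \in F\setminus O$ is necessarily of the "benign" form analyzed in the proof of Lemma~\ref{lem:whatdoesylooklike}. Concretely, $e = \{\supply_1,\demand_i\}$ for some $i < j$ or $e = \{\supply_2,\demand_i\}$ for some $i > j$ (it cannot be incident to $\demand_j$ since those two edges already lie in $O$). Inserting such an edge closes a cycle; because $\demand_i$ is a leaf in the current assignment (its only edge before insertion is to the "wrong" supply node) and $\demand_j$ is the only mixed demand available, the cycle must be the $4$-cycle $(\supply_1,\demand_i,\supply_2,\demand_j)$ or $(\supply_2,\demand_i,\supply_1,\demand_j)$. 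Here I would need the intermediate assignments produced by Lemma~\ref{thm2n} to still have $\demand_j$ as their only mixed demand — this follows because in the $D_m^O = D_m^F$ regime the marking process of Lemma~\ref{thm2n} only inserts leaf edges of $F$ and never disturbs $E_m^F$ until the very end, so every intermediate assignment $C$ satisfies $E_m^C = E_m^F$ as well, the argument of Lemma~\ref{2nmark} guaranteeing $e_1,e_2$ are never deleted while the other is unmarked. Having pinned down the cycle, the reduced cost of the pivot is exactly $(s_{2i}-s_{1i}) + (s_{1j}-s_{2j})$ in the first case and $(s_{1i}-s_{2i}) + (s_{2j}-s_{1j})$ in the second; the ordering $s_{1j}-s_{2j} \leq s_{1i}-s_{2i}$ for $i<j$ and $\geq$ for $i>j$ makes both non-negative, so the objective does not decrease.

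I expect the main obstacle to be the bookkeeping in the middle step: verifying that throughout the walk constructed in Lemma~\ref{thm2n}, every pivot that inserts an edge of $F\setminus O$ really does have $\demand_j$ as the unique mixed demand of the current assignment, so that the closed cycle is forced to pass through $\{\supply_1,\demand_j\}$ and $\{\supply_2,\demand_j\}$. One must rule out the possibility that an intermediate assignment temporarily creates a second mixed demand (which could happen if a pivot inserts an edge without immediately deleting one at the same demand). This is exactly where $E_m^O = E_m^F$ and the structure of the $2{\times}n$ pivots (two disjoint edges to decrease, two to increase) are used: inserting a leaf edge $\{\supply_1,\demand_i\}$ forces decreasing $\{\supply_2,\demand_i\}$ and $\{\supply_1,\demand_j\}$ while increasing $\{\supply_2,\demand_j\}$, and since $\demand_i$ was a leaf, $\{\supply_2,\demand_i\}$ is deleted, leaving $\demand_i$ a leaf served by $\supply_1$ — so no new mixed demand appears. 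Once this invariant is established, the sign computation is the routine part already done in Lemma~\ref{lem:whatdoesylooklike}.
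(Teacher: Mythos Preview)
Your core approach is the same as the paper's: reindex so that $s_{1i}-s_{2i}$ is nonincreasing, identify the unique mixed demand $\demand_j$ via Lemma~\ref{lem:whatdoesylooklike}, observe that any edge of $F\setminus O$ is $\{\supply_1,\demand_i\}$ with $i<j$ or $\{\supply_2,\demand_i\}$ with $i>j$, and compute the reduced cost of the resulting $4$-cycle through $\demand_j$.

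Two points, one a slip and one a misreading. First, your two reduced-cost expressions are interchanged: inserting $\{\supply_1,\demand_i\}$ (your first case, $i<j$) increases on $\{\supply_1,\demand_i\},\{\supply_2,\demand_j\}$ and decreases on $\{\supply_2,\demand_i\},\{\supply_1,\demand_j\}$, giving reduced cost $(s_{1i}-s_{2i})+(s_{2j}-s_{1j})\ge 0$; your formula $(s_{2i}-s_{1i})+(s_{1j}-s_{2j})$ is its negative and is $\le 0$ under the stated ordering. The second case is swapped symmetrically. Second, the lemma concerns a \emph{single} pivot from an assignment $O$ satisfying $D_m^O=D_m^F$; there is no need to track intermediate assignments or argue that the invariant $D_m^C=D_m^F$ persists along the walk. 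That bookkeeping belongs (if anywhere) to the application in Lemma~\ref{thm:monotonehirsch}, where at each step one checks the hypothesis afresh for the current assignment and invokes either this lemma or Lemma~\ref{lem:intnondecreasing}. Dropping that discussion leaves exactly the paper's short proof.
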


\begin{proof}
 Let $D_m^O=D_m^F=\{\demand_j\}$ and let $s$ satisfy the prerequisites of Lemma \ref{lem:whatdoesylooklike}, i.e{.} without loss of generality $s\in \mathbb{R}^{2n}$ satisfies $s_{1i}-s_{2i}\geq s_{1(i+1)}-s_{2(i+1)}$ for all $i\leq n-1$. Inserting an edge in $F\backslash O$ then means  inserting an edge $\{\supply_1,\demand_i\}$ for $i<j$ or $\{\supply_2,\demand_i\}$ for $i>j$.

In the first case, we have $s_{1i}-s_{2i} \geq  s_{1j}-s_{2j}$ and thus obtain
 reduced costs of $$(s_{1i}-s_{2i}) + (s_{2j}-s_{1j}) \geq 0.$$ In the second case, we have $s_{1i}-s_{2i} \leq  s_{1j}-s_{2j}$ and thus reduced costs $$(s_{2i}-s_{1i}) + (s_{1j}-s_{2j}) \geq 0.$$
\end{proof}


\begin{lemma}\label{lem:intnondecreasing}
Let $s\in \mathbb{R}^{2n}$, let $O\neq F$, let $y^F$ be maximal for $s^Ty$ and let finally $D_m^O\neq D_m^F$. Then there is some $\supply_i$ for which we may apply Lemma \ref{marklem} and the corresponding pivot will be non-decreasing. 
\end{lemma}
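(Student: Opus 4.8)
The setup is that $D_m^O\neq D_m^F$, and we want to produce a supply node $\supply_i$ to which Lemma~\ref{marklem} applies and for which the resulting pivot is non-decreasing. As in Lemma~\ref{lem:whatdoesylooklike} and Lemma~\ref{lem:fracnondecreasing}, I would first re-index so that $s_{1i}-s_{2i}\geq s_{1(i+1)}-s_{2(i+1)}$ for all $i\leq n-1$, so that $y^F$ has the staircase shape of Lemma~\ref{lem:whatdoesylooklike}: there is a mixed demand $\demand_j$ in $F$ with $y^F_{1i}=v_i$ for $i<j$, $y^F_{2i}=v_i$ for $i>j$, and $\demand_j$ split between $\supply_1$ and $\supply_2$. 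Let $\demand_k$ denote the unique mixed demand of $O$ (it exists and is unique because the polytope is non-degenerate $2\times n$), so $E_m^O=\{\{\supply_1,\demand_k\},\{\supply_2,\demand_k\}\}$ and by hypothesis $k\neq j$. The key structural observation I would use: since $D_m^O\neq D_m^F$, the single mixed demand $\demand_k$ of $O$ is a leaf in $F$, hence exactly one of $\{\supply_1,\demand_k\},\{\supply_2,\demand_k\}$ is an edge of $F$; say (after possibly swapping the roles of the two supplies — which is allowed because the condition on $s$ only fixes an ordering, and we can equally well have used the opposite ordering) $\{\supply_1,\demand_k\}\in F$ while $\{\supply_2,\demand_k\}\notin F$, i.e.\ $\{\supply_2,\demand_k\}\in O\setminus F$ is an edge to delete. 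Then $\demand_k<j$ in the staircase (all of $\demand_k$'s flow in $F$ goes to $\supply_1$), or $\demand_k>j$ with the roles reversed; WLOG $k<j$.

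Next I would argue which $\supply_i$ to feed into Lemma~\ref{marklem} and why that lemma applies. Since $E_m^O$ has no marked edge an odd number of edges from $\supply_i$ exactly when we pick the right supply — in the $2\times n$ case $E_m^O$ is a path of length $2$ through $\demand_k$, so \emph{every} mixed edge is an even number of edges from one endpoint supply and an odd number from the other; the ``even away from $\supply_i$'' hypothesis of Lemma~\ref{marklem} is therefore automatically satisfied for the supply $\supply_i$ which is an endpoint of the path $E_m^O$ \emph{provided} the mixed edge incident to the other supply is unmarked. Concretely, I claim we may apply Lemma~\ref{marklem} to $\supply_2$: the mixed edge $\{\supply_1,\demand_k\}$ of $O$ is an odd number ($=1$) of edges from $\supply_2$, so we must check it is unmarked. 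If instead $\{\supply_1,\demand_k\}$ were marked, then $\demand_k\in D_m^F$ (a marked edge is in $F$, and $\demand_k$ would then have to be mixed in $F$ by case 2 of the marking rules, since $\{\supply_2,\demand_k\}$ would also be... — here one checks the rules carefully), contradicting $D_m^O\neq D_m^F$; so $\{\supply_1,\demand_k\}$ is unmarked and Lemma~\ref{marklem} applies to $\supply_2$.

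Finally I would identify the pivot that Lemma~\ref{marklem} performs when applied to $\supply_2$ and verify its reduced cost is non-negative. Running the algorithm inside Lemma~\ref{marklem}: we look for an unmarked leaf edge incident to $\supply_2$ to insert; the leaf edges of $F$ incident to $\supply_2$ are precisely $\{\supply_2,\demand_i\}$ for $i>j$ (and possibly $\{\supply_2,\demand_j\}$ is excluded since $\demand_j$ is mixed in $F$). There must be such an edge not already present in $O$ (otherwise $\supply_2$ would have all its leaf edges and, combined with the structure, one derives $O=F$, contradicting $O\neq F$; I would spell this counting out using $|O\cap F|\geq 2$ as in Lemma~\ref{thm2n}). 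Inserting $\{\supply_2,\demand_i\}$ with $i>j$ closes the unique cycle $(\supply_2,\demand_i,\supply_1,\ldots)$ through the mixed demand of the \emph{current} assignment; the circuit increases $\{\supply_2,\demand_i\}$ and the edge of $\supply_1$ at the current mixed demand, and decreases $\{\supply_1,\demand_i\}$ and $\{\supply_2,\cdot\}$ at the mixed demand. The reduced cost of this pivot, against $s$, has exactly the form $(s_{2i}-s_{1i})+(s_{1m}-s_{2m})$ where $\demand_m$ is the current mixed demand — and since the algorithm always keeps the ``even-away'' side anchored so that the current mixed demand $\demand_m$ satisfies $m\leq j$ (all the already-marked staircase structure forces $s_{1m}-s_{2m}\geq s_{1j}-s_{2j}$, and by non-degeneracy $s_{1j}-s_{2j}\geq s_{1i}-s_{2i}$ for $i>j$), we get $(s_{2i}-s_{1i})+(s_{1m}-s_{2m})\geq (s_{2i}-s_{1i})+(s_{1i}-s_{2i})=0$, exactly as in Lemma~\ref{lem:fracnondecreasing}. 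The analogous computation covers step (1)-leaf insertion when the current mixed demand is already $\demand_j$. One similarly checks the remaining branches of Lemma~\ref{marklem}'s algorithm (inserting $\{\supply_i,\demand_j\}\in E_m^F\cap O$ requires no pivot, hence is trivially non-decreasing; in the $2\times n$ case step (4) does not occur).

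\medskip
\noindent\textbf{Main obstacle.} The delicate point is not the reduced-cost inequality — that is the same $s_{1i}-s_{2i}$ monotonicity trick used in Lemmas~\ref{lem:whatdoesylooklike} and~\ref{lem:fracnondecreasing} — but rather verifying that the supply node $\supply_i$ selected for Lemma~\ref{marklem} and the particular pivot chosen by that lemma's internal algorithm genuinely produce an \emph{increasing} edge of $F$ on the correct side of the staircase, i.e.\ that the ``current mixed demand'' throughout the process stays consistent with the ordering $\leq j$ (equivalently $\geq j$ in the swapped case). This forces one to track how the mixed demand of the current assignment moves as pivots are applied, and to argue that the marking invariant (together with $D_m^O\neq D_m^F$) rules out the configuration in which the pivot would insert a ``wrong-side'' edge whose reduced cost could be negative.
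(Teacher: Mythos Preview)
Your overall strategy matches the paper's: order the demands so that $s_{1i}-s_{2i}$ is non-increasing, let $\demand_k$ be the mixed demand of $O$ and $\demand_j$ that of $F$, reduce by symmetry to $k<j$, apply Lemma~\ref{marklem} to $\supply_2$, and check that any inserted edge $\{\supply_2,\demand_p\}$ gives non-negative reduced cost $(s_{2p}-s_{1p})+(s_{1k}-s_{2k})$.

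However, your verification that Lemma~\ref{marklem} applies to $\supply_2$ contains a concrete error. You write that ``the mixed edge $\{\supply_1,\demand_k\}$ of $O$ is an odd number ($=1$) of edges from $\supply_2$'', but the path in $E_m^O$ from $\supply_2$ ending with $\{\supply_1,\demand_k\}$ is $\supply_2,\demand_k,\supply_1$, of length two --- even. It is $\{\supply_2,\demand_k\}$ that lies at odd distance one from $\supply_2$, and \emph{that} is the edge whose markedness must be ruled out. Your subsequent argument that a marked $\{\supply_1,\demand_k\}$ would force $\demand_k\in D_m^F$ is also false: rule~(1) of the marking system allows $\{\supply_1,\demand_k\}$ to be marked precisely when $\demand_k$ is a leaf in $F$, which is the case here. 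The repair is immediate and is exactly what the paper does: since $k<j$ we have $\{\supply_2,\demand_k\}\notin F$ (this is already your WLOG assumption), hence $\{\supply_2,\demand_k\}$ cannot be marked, and Lemma~\ref{marklem} applies to $\supply_2$.

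Once this is fixed, the reduced-cost computation is much simpler than you make it. This lemma concerns a \emph{single} pivot at the fixed assignment $O$; there is no moving ``current mixed demand $\demand_m$'' to track, only $\demand_k$. Lemma~\ref{marklem} applied to $\supply_2$ either performs no pivot (step~(2), trivially non-decreasing) or inserts some $\{\supply_2,\demand_p\}\in F$ (steps~(1) or~(3)). Every edge of $F$ incident to $\supply_2$ has $p\geq j>k$ by the structure in Lemma~\ref{lem:whatdoesylooklike}, so $(s_{2p}-s_{1p})+(s_{1k}-s_{2k})\geq 0$ follows directly from the ordering on $s$. Your ``Main obstacle'' paragraph, about tracking the mixed demand through successive pivots, is addressing a difficulty that does not arise in this lemma; that iteration is handled one level up, in Lemma~\ref{thm:monotonehirsch}.
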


\begin{proof}
Let again, without loss of generality, $s\in \mathbb{R}^{2n}$ satisfy $s_{1i}-s_{2i}\geq s_{1(i+1)}-s_{2(i+1)}$ for all $i\leq n-1$. We have $D_m^O=\{\demand_q\}$, while $D_m^F=\{\demand_j\}$ for $j \neq q$; let us first consider the case $q<j$.

Note that $q<j$ implies that $\{\supply_2,\demand_q\} \notin F$, so that we may apply Lemma \ref{marklem} to $\supply_2$. Thus, when we apply a pivot, we insert an edge $\{\supply_2,\demand_p\}\in F\backslash O$ for which $p\geq q$. Such a pivot has reduced costs 
$$(s_{2p}-s_{1p})+(s_{1q}-s_{2q})\geq 0,$$
and thus a corresponding pivot, as determined using Lemma \ref{marklem}, will be non-decreasing. The case $q>j$ follows analogously with the roles of $\supply_1$ and $\supply_2$ switched around.
\end{proof}

Finally, we obtain the desired statement.

\begin{lemma}\label{thm:monotonehirsch}
The $2{\times}n$--transportation polytope has a monotone diameter of at most $n$.
\end{lemma}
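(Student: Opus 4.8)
The plan is to combine the structural results already established: Lemma~\ref{thm2n} shows that the edge walk built by the marking system has length at most $n$, and Lemmas~\ref{lem:fracnondecreasing} and \ref{lem:intnondecreasing} show that every individual pivot in that walk is non-decreasing with respect to the given cost vector $s$. The statement then follows by checking that these pieces genuinely fit together along a single walk, and that the walk ends at the optimizer $y^F$.

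First I would fix a cost vector $s\in\mathbb{R}^{2n}$ and, after reindexing the supply nodes (which is harmless, as noted after Lemma~\ref{lem:whatdoesylooklike}), assume $s_{1i}-s_{2i}\geq s_{1(i+1)}-s_{2(i+1)}$ for all $i\leq n-1$. Let $y^O$ be an arbitrary starting vertex, and let $y^F$ be the optimizer of $\max_{y\in P} s^Ty$ as described explicitly in Lemma~\ref{lem:whatdoesylooklike}; in particular $F$ has a unique mixed demand $\demand_j$. If $O=F$ there is nothing to prove, so assume $O\neq F$.

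Next I would run the marking process of Lemma~\ref{thm2n} starting from $O$: at each stage the current assignment is partially marked, and by that lemma after at most one pivot we may mark a further edge of $F$, never deleting a marked edge, so after at most $n$ pivots we reach $F$. The key point is that each such pivot inserts an edge of $F\setminus O'$ (for the current assignment $O'$) and is exactly the pivot prescribed by Lemma~\ref{marklem} (or involves no pivot at all, when the edge to be marked already lies in the current assignment). I would split into the two cases by which those earlier monotonicity lemmas were organized: if $D_m^{O'}=D_m^F$ then Lemma~\ref{lem:fracnondecreasing} gives that the pivot is non-decreasing; if $D_m^{O'}\neq D_m^F$ then Lemma~\ref{lem:intnondecreasing} says we may pick $\supply_i$ so that applying Lemma~\ref{marklem} yields a non-decreasing pivot. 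Since the marking argument of Lemma~\ref{thm2n} only ever needed ``some $\supply_i$ to which Lemma~\ref{marklem} applies'', we are free to make that choice the monotone one supplied by Lemma~\ref{lem:intnondecreasing}. Pivots that mark an already-present edge contribute length $0$ and leave the objective value unchanged, hence trivially non-decreasing. Concatenating, we get an edge walk $y^O = y^{(0)},\dots,y^{(\ell)} = y^F$ with $\ell\leq n$ along which $s^Ty^{(0)}\leq s^Ty^{(1)}\leq\dots\leq s^Ty^{(\ell)}$, which is precisely a monotone edge walk of length at most $n$ to the $s$-optimal vertex.

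The main obstacle is the bookkeeping that ensures the case distinction $D_m^{O'}=D_m^F$ versus $D_m^{O'}\neq D_m^F$ stays coherent as $O'$ evolves: once the walk has $D_m^{O'}\neq D_m^F$, the pivot from Lemma~\ref{lem:intnondecreasing} inserts a mixed-in-$F$ edge and may change the mixed demand of the current assignment, and one must verify that this does not leave a state outside the scope of both lemmas and that the target mixed demand $\demand_j$ eventually becomes the unique mixed demand. This is where the explicit description of $y^F$ from Lemma~\ref{lem:whatdoesylooklike} and the fact that $F$ is a spanning tree with a single mixed demand do the work: after the mixed demand has moved to $\demand_j$ we are in the $D_m^{O'}=D_m^F$ regime, Lemma~\ref{2nmark} guarantees the shared mixed edges are not destroyed, and Lemma~\ref{lem:fracnondecreasing} finishes all remaining pivots monotonically. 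Combining, the monotone diameter is at most $n$.
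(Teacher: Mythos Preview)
Your proposal is correct and follows essentially the same approach as the paper: fix the optimal assignment $F$ via Lemma~\ref{lem:whatdoesylooklike}, run the marking process of Lemma~\ref{thm2n}, and at each step invoke Lemma~\ref{lem:fracnondecreasing} or Lemma~\ref{lem:intnondecreasing} according to whether $D_m^{O'}=D_m^F$ or not. The paper's own proof is considerably terser and omits the bookkeeping discussion in your final paragraph, so your write-up is in fact more careful about verifying that the monotone choice of $\supply_i$ is compatible with the marking algorithm throughout the walk.
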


\begin{proof} For a given $s\in \mathbb{R}^{2n}$, let $F$ be the corresponding maximal assignment and $O$ be the original assignment. By Lemma \ref{thm2n}, it is possible to arrive at $F$ after at most $n$ pivot steps -- using the approach outlined in its proof. It suffices to see that one can use non-decreasing pivots in this approach.

 If we have $D_m^O=D_m^F$, then this follows by Lemma \ref{lem:fracnondecreasing}, as then all pivots inserting edges in $F\backslash O$ are non-decreasing. Else, by Lemma \ref{lem:intnondecreasing} there is a non-decreasing next pivot that adheres to the process in the proof of Lemma \ref{thm2n}. This proves the claim.
\end{proof}

\subsection{$3{\times}n$--transportation polytope}

The Hirsch conjecture for the $3{\times}n$--transportation polytope claims a bound of $n+2-k$ on the diameter, where $k$ is the number of critical edges in the polytope. Before turning to the details, let us give a top-level view on the proof of this bound.
\\\\
\noindent \emph{Proof of Theorem \ref{thm:3n}.} Let $P$ be a non-degenerate $3{\times}n$ transportation polytope with $k$ critical edges and let $O$ be some initial assignment and $F$ be a final assignment. In subsection \ref{subsection3n} we present an algorithm such that at each step we choose an edge in $F$ to insert (if needed) and then mark it, such that it satisfies the conditions of our marking system, and no marked edge is ever deleted. Since a marked edge is never deleted, once we do this for all $|F|$ edges we reach our final assignment. This requires at most $|F|-k=3+n-k-1=n+2-k$ insertions, and thus pivots, since there are $|F|$ edges to mark and each one requires at most $1$ insertion, and we also know that when the critical edges are marked they need not be inserted since they exist in every assignment. Therefore since the edge distance between any two assignments is at most $n+2-k$, we have the diameter is at most the Hirsch bound. \qed
\\\\

The cusp of this argument is the lengthy algorithm for choosing which edge to mark (after possibly inserting it) in the following subsection. Like the proof of Lemma \ref{thm2n}, we consider a case-by-case analysis of the marked and unmarked edge combinations in the possible configurations for the mixed edges in $O$. Also, like Lemma \ref{thm2n}, we utilize the power of Lemma \ref{marklem} whenever possible. However, there are several cases where this cannot be used, which complicates the algorithm. Further we must be wary of the following two marked mixed edge configurations.
      \begin{figure}[H]
        \centering
\subfloat[$D_m^O=2$, Case 2c]{
            \begin{tikzpicture}[scale=0.35]
                    \coordinate (c1) at (0,4);
                    \coordinate (c2) at (0,2);
                    \coordinate (c3) at (0,0);
                    \coordinate (x1) at (4,3);
                    \coordinate (x2) at (4,1);
                    \draw [fill, black] (c1) circle (0.1cm);
                    \draw [fill, black] (c2) circle (0.1cm);
                    \draw [fill, black] (c3) circle (0.1cm);
                    \draw [fill, black] (x1) circle (0.1cm);  
                    \draw [fill, black] (x2) circle (0.1cm);
                    \node[left] at (c1) {$\supply_1$};
                    \node[left] at (c2) {$\supply_2$};
                    \node[left] at (c3) {$\supply_3$};
                    \node[right] at (x1) {$\demand_1$};
                    \node[right] at (x2) {$\demand_2$};
                    \draw(c1)--(x1);
                    \draw[ultra thick] (c2)--(x1);
                    \draw[ultra thick] (c2)--(x2);
                    \draw (c3)--(x2);
            \end{tikzpicture}}
\hspace{3em}
\subfloat[$D_m^O=2$, Case 3a]{
            \begin{tikzpicture}[scale=0.35]
                    \coordinate (c1) at (0,4);
                    \coordinate (c2) at (0,2);
                    \coordinate (c3) at (0,0);
                    \coordinate (x1) at (4,3);
                    \coordinate (x2) at (4,1);
                    \draw [fill, black] (c1) circle (0.1cm);
                    \draw [fill, black] (c2) circle (0.1cm);
                    \draw [fill, black] (c3) circle (0.1cm);
                    \draw [fill, black] (x1) circle (0.1cm);
                    \draw [fill, black] (x2) circle (0.1cm);
                    \node[left] at (c1) {$\supply_1$};
                    \node[left] at (c2) {$\supply_2$};
                    \node[left] at (c3) {$\supply_3$};
                    \node[right] at (x1) {$\demand_1$};
                    \node[right] at (x2) {$\demand_2$};
                    \draw[ultra thick](c1)--(x1);
                    \draw[ultra thick] (c2)--(x1);
                    \draw[ultra thick] (c2)--(x2);
                    \draw (c3)--(x2);
            \end{tikzpicture}}
        \end{figure}
We avoid the first of these cases ($D_m^O=2$, Case 2c) altogether, and we will require special restrictions on $\demand_2$ when we enter the second of these cases ($D_m^O=2$, Case 3a).


\subsection{Algorithm for the proof of Theorem \ref{thm:3n}}\label{subsection3n}

For the sake of a simple wording, we always refer to the current assignment as $O$ in our algorithm, and we assume that initially we start out with a copy of our initial assignment with no edges marked. We structure our investigation of the configurations by the number of mixed demands $|D_m^O|$, which is equal to $1$ or $2$, and the number of marked edges incident to $D_m^O$.

\subsubsection*{Case $|D_m^O|=1$}

    \begin{figure}[H]
        \centering
\subfloat[$0$ marked edges]{
            \begin{tikzpicture}[scale=0.38]
                    \coordinate (c1) at (0,4);
                    \coordinate (c2) at (0,2);
                    \coordinate (c3) at (0,0);
                    \coordinate (x1) at (4,2);
                    \draw [fill, black] (c1) circle (0.1cm);
                    \draw [fill, black] (c2) circle (0.1cm);
                    \draw [fill, black] (c3) circle (0.1cm);
                    \draw [fill, black] (x1) circle (0.1cm);
                    \node[left] at (c1) {$\supply_1$};
                    \node[left] at (c2) {$\supply_2$};
                    \node[left] at (c3) {$\supply_3$};
                    \node[right] at (x1) {$\demand_1$};
                    \draw(c1)--(x1);
                    \draw(c2) --(x1);
                    \draw(c3) --(x1);
            \end{tikzpicture}}
\hspace{3em}
 \subfloat[$1$ marked edge]{
            \begin{tikzpicture}[scale=0.38]
                    \coordinate (c1) at (0,4);
                    \coordinate (c2) at (0,2);
                    \coordinate (c3) at (0,0);
                    \coordinate (x1) at (4,2);
                    \draw [fill, black] (c1) circle (0.1cm);
                    \draw [fill, black] (c2) circle (0.1cm);
                    \draw [fill, black] (c3) circle (0.1cm);
                    \draw [fill, black] (x1) circle (0.1cm);
                    \node[left] at (c1) {$\supply_1$};
                    \node[left] at (c2) {$\supply_2$};
                    \node[left] at (c3) {$\supply_3$};
                    \node[right] at (x1) {$\demand_1$};
                    \draw[ultra thick] (c1)--(x1);
                    \draw(c2) --(x1);
                    \draw(c3) --(x1);
            \end{tikzpicture}}
\hspace{3em}
\subfloat[$2$ marked edges]{
            \begin{tikzpicture}[scale=0.38]
                    \coordinate (c1) at (0,4);
                    \coordinate (c2) at (0,2);
                    \coordinate (c3) at (0,0);
                    \coordinate (x1) at (4,2);
                    \draw [fill, black] (c1) circle (0.1cm);
                    \draw [fill, black] (c2) circle (0.1cm);
                    \draw [fill, black] (c3) circle (0.1cm);
                    \draw [fill, black] (x1) circle (0.1cm);
                    \node[left] at (c1) {$\supply_1$};
                    \node[left] at (c2) {$\supply_2$};
                    \node[left] at (c3) {$\supply_3$};
                    \node[right] at (x1) {$\demand_1$};
                    \draw[ultra thick] (c1)--(x1);
                    \draw[ultra thick] (c2) --(x1);
                    \draw (c3) --(x1);
            \end{tikzpicture}}

        \end{figure}
\noindent If our mixed edges look like any of these cases, there is some $\supply_i$ such that we can apply Lemma \ref{marklem} and mark an edge after (at most) one pivot. Otherwise our mixed edges look like:
    \begin{figure}[H]
        \centering
            \begin{tikzpicture}[scale=0.35]
                    \coordinate (c1) at (0,4);
                    \coordinate (c2) at (0,2);
                    \coordinate (c3) at (0,0);
                    \coordinate (x1) at (4,2);
                    \draw [fill, black] (c1) circle (0.1cm);
                    \draw [fill, black] (c2) circle (0.1cm);
                    \draw [fill, black] (c3) circle (0.1cm);
                    \draw [fill, black] (x1) circle (0.1cm);
                    \node[left] at (c1) {$\supply_1$};
                    \node[left] at (c2) {$\supply_2$};
                    \node[left] at (c3) {$\supply_3$};
                    \node[right] at (x1) {$\demand_1$};
                    \draw[ultra thick] (c1)--(x1);
                    \draw[ultra thick] (c2) --(x1);
                    \draw[ultra thick] (c3) --(x1);
            \end{tikzpicture}
        \end{figure}
\noindent Then, by assumption, all $\supply_i$ have all their leaf edges and $E_m^O=E_m^F$. Hence $O=F$.
\bigskip

\bigskip

 \subsubsection*{Case $|D_m^O|=2$}

\noindent \textbf{0 or 1 marked edges}
        \begin{figure}[H]
        \centering
\subfloat[Case 0]{
            \begin{tikzpicture}[scale=0.35]
                    \coordinate (c1) at (0,4);
                    \coordinate (c2) at (0,2);
                    \coordinate (c3) at (0,0);
                    \coordinate (x1) at (4,3);
                    \coordinate (x2) at (4,1);
                    \draw [fill, black] (c1) circle (0.1cm);
                    \draw [fill, black] (c2) circle (0.1cm);
                    \draw [fill, black] (c3) circle (0.1cm);
                    \draw [fill, black] (x1) circle (0.1cm);
                    \draw [fill, black] (x2) circle (0.1cm);
                    \node[left] at (c1) {$\supply_1$};
                    \node[left] at (c2) {$\supply_2$};
                    \node[left] at (c3) {$\supply_3$};
                    \node[right] at (x1) {$\demand_1$};
                    \node[right] at (x2) {$\demand_2$};
                    \draw(c1)--(x1);
                    \draw (c2)--(x1);
                    \draw (c2)--(x2);
                    \draw (c3)--(x2);
            \end{tikzpicture}}
\hspace{3em}
\subfloat[Case 1a]{
            \begin{tikzpicture}[scale=0.35]
                    \coordinate (c1) at (0,4);
                    \coordinate (c2) at (0,2);
                    \coordinate (c3) at (0,0);
                    \coordinate (x1) at (4,3);
                    \coordinate (x2) at (4,1);
                    \draw [fill, black] (c1) circle (0.1cm);
                    \draw [fill, black] (c2) circle (0.1cm);
                    \draw [fill, black] (c3) circle (0.1cm);
                    \draw [fill, black] (x1) circle (0.1cm);
                    \draw [fill, black] (x2) circle (0.1cm);
                    \node[left] at (c1) {$\supply_1$};
                    \node[left] at (c2) {$\supply_2$};
                    \node[left] at (c3) {$\supply_3$};
                    \node[right] at (x1) {$\demand_1$};
                    \node[right] at (x2) {$\demand_2$};
                    \draw[ultra thick] (c1)--(x1);
                    \draw (c2)--(x1);
                    \draw (c2)--(x2);
                    \draw (c3)--(x2);
            \end{tikzpicture}}
\hspace{3em}
\subfloat[Case 1b]{
            \begin{tikzpicture}[scale=0.35]
                    \coordinate (c1) at (0,4);
                    \coordinate (c2) at (0,2);
                    \coordinate (c3) at (0,0);
                    \coordinate (x1) at (4,3);
                    \coordinate (x2) at (4,1);
                    \draw [fill, black] (c1) circle (0.1cm);
                    \draw [fill, black] (c2) circle (0.1cm);
                    \draw [fill, black] (c3) circle (0.1cm);
                    \draw [fill, black] (x1) circle (0.1cm);
                    \draw [fill, black] (x2) circle (0.1cm);
                    \node[left] at (c1) {$\supply_1$};
                    \node[left] at (c2) {$\supply_2$};
                    \node[left] at (c3) {$\supply_3$};
                    \node[right] at (x1) {$\demand_1$};
                    \node[right] at (x2) {$\demand_2$};
                    \draw (c1)--(x1);
                    \draw[ultra thick] (c2)--(x1);
                    \draw (c2)--(x2);
                    \draw (c3)--(x2);
            \end{tikzpicture}}
       \end{figure}
\noindent Clearly we may apply Lemma \ref{marklem} to $\supply_3$, and mark an edge after (at most) one pivot.
\bigskip

\noindent \textbf{2 marked edges}
        \bigskip

\noindent \textbf{Case 2a.\qquad }
     \begin{figure}[H]
        \centering
\subfloat[Case 2a(i)]{
            \begin{tikzpicture}[scale=0.35]
                    \coordinate (c1) at (0,4);
                    \coordinate (c2) at (0,2);
                    \coordinate (c3) at (0,0);
                    \coordinate (x1) at (4,3);
                    \coordinate (x2) at (4,1);
                    \draw [fill, black] (c1) circle (0.1cm);
                    \draw [fill, black] (c2) circle (0.1cm);
                    \draw [fill, black] (c3) circle (0.1cm);
                    \draw [fill, black] (x1) circle (0.1cm);
                    \draw [fill, black] (x2) circle (0.1cm);
                    \node[left] at (c1) {$\supply_1$};
                    \node[left] at (c2) {$\supply_2$};
                    \node[left] at (c3) {$\supply_3$};
                    \node[right] at (x1) {$\demand_1$};
                    \node[right] at (x2) {$\demand_2$};
                    \draw[ultra thick] (c1)--(x1);
                    \draw (c2)--(x1);
                    \draw[ultra thick] (c2)--(x2);
                    \draw (c3)--(x2);
            \end{tikzpicture}}
\hspace{3em}
\subfloat[Case 2a(ii)]{
            \begin{tikzpicture}[scale=0.35]
                    \coordinate (c1) at (0,4);
                    \coordinate (c2) at (0,2);
                    \coordinate (c3) at (0,0);
                    \coordinate (x1) at (4,3);
                    \coordinate (x2) at (4,1);
                    \draw [fill, black] (c1) circle (0.1cm);
                    \draw [fill, black] (c2) circle (0.1cm);
                    \draw [fill, black] (c3) circle (0.1cm);
                    \draw [fill, black] (x1) circle (0.1cm);
                    \draw [fill, black] (x2) circle (0.1cm);
                    \node[left] at (c1) {$\supply_1$};
                    \node[left] at (c2) {$\supply_2$};
                    \node[left] at (c3) {$\supply_3$};
                    \node[right] at (x1) {$\demand_1$};
                    \node[right] at (x2) {$\demand_2$};
                    \draw[ultra thick] (c1)--(x1);
                    \draw (c2)--(x1);
                    \draw (c2)--(x2);
                    \draw[ultra thick] (c3)--(x2);
            \end{tikzpicture}}
       \end{figure}
We can apply Lemma \ref{marklem} to $\supply_3$ in case 2a(i) and $\supply_2$ in case 2b(ii), and mark one edge after (at most) one pivot.
\bigskip

\noindent \textbf{Case 2b.\qquad }
        \begin{figure}[H]
        \centering
            \begin{tikzpicture}[scale=0.35]
                    \coordinate (c1) at (0,4);
                    \coordinate (c2) at (0,2);
                    \coordinate (c3) at (0,0);
                    \coordinate (x1) at (4,3);
                    \coordinate (x2) at (4,1);
                    \draw [fill, black] (c1) circle (0.1cm);
                    \draw [fill, black] (c2) circle (0.1cm);
                    \draw [fill, black] (c3) circle (0.1cm);
                    \draw [fill, black] (x1) circle (0.1cm);
                    \draw [fill, black] (x2) circle (0.1cm);
                    \node[left] at (c1) {$\supply_1$};
                    \node[left] at (c2) {$\supply_2$};
                    \node[left] at (c3) {$\supply_3$};
                    \node[right] at (x1) {$\demand_1$};
                    \node[right] at (x2) {$\demand_2$};
                    \draw[ultra thick] (c1)--(x1);
                    \draw[ultra thick] (c2)--(x1);
                    \draw (c2)--(x2);
                    \draw (c3)--(x2);
            \end{tikzpicture}
       \end{figure}

\noindent We do the following.
\begin{enumerate}
        \item If $\demand_2$ is a leaf demand in $F$, we know that $\{\supply_3,\demand_2\}\in F$ ($\supply_1$ and  $\supply_2$ already have all their leaf edges since they are incident to marked mixed edges), we mark it.
    \item Else $\demand_2$ is a mixed demand in $F$. If $\{\supply_2,\demand_2\}\in F$, we mark it.

          Otherwise $\{\supply_1,\demand_2\},\{\supply_3,\demand_2\}\in F$. We insert $\{\supply_1,\demand_2\}$:
   \begin{figure}[H]
        \centering
        \subfloat{
            \begin{tikzpicture}[scale=0.35]
                    \coordinate (c1) at (0,4);
                    \coordinate (c2) at (0,2);
                    \coordinate (c3) at (0,0);
                    \coordinate (x1) at (4,3);
                    \coordinate (x2) at (4,1);
                    \draw [fill, black] (c1) circle (0.1cm);
                    \draw [fill, black] (c2) circle (0.1cm);
                    \draw [fill, black] (c3) circle (0.1cm);
                    \draw [fill, black] (x1) circle (0.1cm);
                    \draw [fill, black] (x2) circle (0.1cm);
                    \node[left] at (c1) {$\supply_1$};
                    \node[left] at (c2) {$\supply_2$};
                    \node[left] at (c3) {$\supply_3$};
                    \node[right] at (x1) {$\demand_1$};
                    \node[right] at (x2) {$\demand_2$};
                    \draw[ultra thick] (c1)--(x1);
                    \draw[ultra thick] (c2)--(x1);
                    \draw (c2)--(x2);
                    \draw  (c3)--(x2);
                     \draw[dashed] (c1)--(x2);
                            \end{tikzpicture} }
            $\longrightarrow$
            \subfloat{
                            \begin{tikzpicture}[scale=0.35]
                    \coordinate (c1) at (0,4);
                    \coordinate (c2) at (0,2);
                    \coordinate (c3) at (0,0);
                    \coordinate (x1) at (4,3);
                    \coordinate (x2) at (4,1);
                    \draw [fill, black] (c1) circle (0.1cm);
                    \draw [fill, black] (c2) circle (0.1cm);
                    \draw [fill, black] (c3) circle (0.1cm);
                    \draw [fill, black] (x1) circle (0.1cm);
                    \draw [fill, black] (x2) circle (0.1cm);
                    \node[left] at (c1) {$\supply_1$};
                    \node[left] at (c2) {$\supply_2$};
                    \node[left] at (c3) {$\supply_3$};
                    \node[right] at (x1) {$\demand_1$};
                    \node[right] at (x2) {$\demand_2$};
                     \draw[ultra thick] (c1)--(x1);
                    \draw[ultra thick] (c2)--(x1);
                    \draw (c3)--(x2);
                    \draw[ultra thick] (c1)--(x2);
            \end{tikzpicture} }
       \end{figure}
       This pivot deletes $\{\supply_2,\demand_2\}$ (otherwise there is an edge incident to $\supply_2$ to delete, but no edge to increase).
\end{enumerate}
 \bigskip

\noindent \textbf{Case 2c.\qquad }
        \begin{figure}[H]
        \centering
            \begin{tikzpicture}[scale=0.35]
                    \coordinate (c1) at (0,4);
                    \coordinate (c2) at (0,2);
                    \coordinate (c3) at (0,0);
                    \coordinate (x1) at (4,3);
                    \coordinate (x2) at (4,1);
                    \draw [fill, black] (c1) circle (0.1cm);
                    \draw [fill, black] (c2) circle (0.1cm);
                    \draw [fill, black] (c3) circle (0.1cm);
                    \draw [fill, black] (x1) circle (0.1cm);
                    \draw [fill, black] (x2) circle (0.1cm);
                    \node[left] at (c1) {$\supply_1$};
                    \node[left] at (c2) {$\supply_2$};
                    \node[left] at (c3) {$\supply_3$};
                    \node[right] at (x1) {$\demand_1$};
                    \node[right] at (x2) {$\demand_2$};
                    \draw (c1)--(x1);
                    \draw[ultra thick] (c2)--(x1);
                    \draw[ultra thick] (c2)--(x2);
                    \draw (c3)--(x2);
            \end{tikzpicture}
       \end{figure}
\noindent This case will never occur when we follow the algorithm given in this proof. Note that in each case the edge marked is either the one inserted or it already exists. Therefore, if we were to enter case 2c, one of these marked edges would have already been marked in the previous assignment and would have been in the mixed part of this assignment.

	Thus, since applying Lemma \ref{marklem} always marks an edge incident to some $\supply_i$ which is not already incident to some marked edge in the mixed part of the assignment, we will never end up in case 2c. Hence we do not enter this case from cases $|D_m^O|=1$ or $|D_M^O|=2$ for 0 marked edges, 1 marked edge, or case 2a.

Note also, once there are two marked edges incident to the same $\demand_j$, they will forever be marked and in the mixed part of the assignment, hence we could not arrive in case 2c from case 2b or any of the 3 marked edge cases.
\bigskip

\noindent \textbf{3 marked edges}
\bigskip

\noindent \textbf{Case 3a.\qquad }
        \begin{figure}[H]
        \centering
            \begin{tikzpicture}[scale=0.35]
                    \coordinate (c1) at (0,4);
                    \coordinate (c2) at (0,2);
                    \coordinate (c3) at (0,0);
                    \coordinate (x1) at (4,3);
                    \coordinate (x2) at (4,1);
                    \draw [fill, black] (c1) circle (0.1cm);
                    \draw [fill, black] (c2) circle (0.1cm);
                    \draw [fill, black] (c3) circle (0.1cm);
                    \draw [fill, black] (x1) circle (0.1cm);
                    \draw [fill, black] (x2) circle (0.1cm);
                    \node[left] at (c1) {$\supply_1$};
                    \node[left] at (c2) {$\supply_2$};
                    \node[left] at (c3) {$\supply_3$};
                    \node[right] at (x1) {$\demand_1$};
                    \node[right] at (x2) {$\demand_2$};
                    \draw[ultra thick] (c1)--(x1);
                    \draw[ultra thick] (c2)--(x1);
                    \draw[ultra thick] (c2)--(x2);
                    \draw (c3)--(x2);
            \end{tikzpicture}
       \end{figure}

First note when we enter this configuration that $\demand_2$ must be a mixed demand in $F$. For this assume we have entered case 3a from a different case. At the end of this case we will show that once we are in case 3a we either remain in the case (with the same mixed edges and thus $\demand_2$ remains a mixed demand), or we enter either case 3b or the case with 4 marked edges.

As argued in case 2c, if we enter case 3a, at least one of the edges $\{\supply_2,\demand_1\}$ or $\{\supply_2,\demand_2\}$ must have been a mixed edge in the previous assignment and marked. Hence, since Lemma \ref{marklem} always marks an edge incident to some $\supply_i$ with no edges marked in the mixed part of the assignment, we will never enter case 3a when applying Lemma \ref{marklem}. Therefore we will never enter this case after applying cases $|D_m^O|=1$ or $|D_M^O|=2$ for 0 marked edges, 1 marked edge, or case 2a.

Then for case 2b(1), we will enter case 3b. For case 2b(2) we will enter case 3a, however note that  $\demand_2$ is the same node in both cases and is mixed (note that most node labelings in mixed part remain the same, although $\supply_1$ and $\supply_2$ may switch their labels). Finally, for case 3b, we have that either we will enter the case with 4 marked edges, or we will enter case 3a. However when we enter case 3a (this happens in case 3b(2) if $\{\supply_3,\demand_3\}$ is unmarked), we have that $\demand_3$ is a mixed demand in $F$.\\

\noindent Using this knowledge, we do the following:
    \begin{enumerate}
	\item If there is an unmarked leaf edge $\{\supply_3,\demand_3\}$, with $\{\supply_2,\demand_3\}\in O$, we insert  $\{\supply_3,\demand_3\}$ (if necessary) and mark it. Clearly no marked edges are deleted.

\item  Else if there is a leaf edge of the form $\{\supply_3,\demand_3\}$ to insert, we insert and mark this edge. 
            \begin{figure}[H]
         \centering
            \begin{tikzpicture}[scale=0.35]
                    \coordinate (c1) at (0,4);
                    \coordinate (c2) at (0,2);
                    \coordinate (c3) at (0,0);
                    \coordinate (x1) at (4,3);
                    \coordinate (x2) at (4,1);
                    \coordinate (x3) at (-4,2);
                    \draw [fill, black] (c1) circle (0.1cm);
                    \draw [fill, black] (c2) circle (0.1cm);
                    \draw [fill, black] (c3) circle (0.1cm);
                    \draw [fill, black] (x1) circle (0.1cm);
                    \draw [fill, black] (x2) circle (0.1cm);
                    \draw [fill, black] (x3) circle (0.1cm);
                    \node[left] at (c1) {$\supply_1$};
                    \node[left] at (c2) {$\supply_2$};
                    \node[left] at (c3) {$\supply_3$};
                    \node[right] at (x1) {$\demand_1$};
                    \node[right] at (x2) {$\demand_2$};
                    \node[left] at (x3) {$\demand_3$};
                    \draw[ultra thick] (c1)--(x1);
                    \draw[ultra thick] (c2)--(x1);
                    \draw[ultra thick] (c2)--(x2);
                    \draw (c1)--(x3);
                    \draw[dashed] (c3)--(x3);
                    \draw (c3)--(x2);
            \end{tikzpicture}
       \end{figure}
          This cannot delete $\{\supply_2,\demand_1\}$. Suppose it does, and let $C$ be the assignment obtained by deleting $\{\supply_2,\demand_1\}$. In $C$ we would need to decrease $\{\supply_1,\demand_1\}$, and thus there is some edge $\{\supply_1,\demand_j\}$ incident to $\supply_1$ to increase. However since $\supply_1$ already has all its leaf edges and they are marked and leaves in $O$, this would imply that $\{\supply_1,\demand_j\}$ is mixed in $F$. However, since $\demand_1$ and $\demand_2$ are the only mixed demands in $F$and clearly $\demand_2\neq \demand_j$, this is a contradiction. 

\item Else, since $\demand_2$ is a mixed demand in $F$, we have that $\{\supply_3,\demand_2\}\in F$ so we mark it (since $\supply_3$ has all its leaf edges)
\end{enumerate}

Finally, as stated at the beginning of the case, we want to note what case we enter after these steps. If step (3) is applied we  enter the case with 4 marked edges. Consider the three configurations below. If step (1) is applied, we arrive at the left or middle one, if step (2) is applied, we arrive at the middle or right one.

      \begin{figure}[H]
         \centering
            \begin{tikzpicture}[scale=0.35]
                    \coordinate (c1) at (0,4);
                    \coordinate (c2) at (0,2);
                    \coordinate (c3) at (0,0);
                    \coordinate (x1) at (4,3);
                    \coordinate (x2) at (4,1);
                    \coordinate (x3) at (-4,2);
                    \draw [fill, black] (c1) circle (0.1cm);
                    \draw [fill, black] (c2) circle (0.1cm);
                    \draw [fill, black] (c3) circle (0.1cm);
                    \draw [fill, black] (x1) circle (0.1cm);
                    \draw [fill, black] (x2) circle (0.1cm);
                     \draw [fill, black] (x3) circle (0.1cm);
                    \node[left] at (c1) {$\supply_1$};
                    \node[left] at (c2) {$\supply_2$};
                    \node[left] at (c3) {$\supply_3$};
                    \node[right] at (x1) {$\demand_1$};
                    \node[right] at (x2) {$\demand_2$};
                     \node[left] at (x3) {$\demand_3$};
                    \draw[ultra thick] (c1)--(x1);
                    \draw[ultra thick] (c2)--(x1);
                    \draw[ultra thick] (c2)--(x2);
                     \draw (c2)--(x3);
		\draw[ultra thick] (c3)--(x3);
            \end{tikzpicture}
\hspace{2em}
\begin{tikzpicture}[scale=0.35]
                    \coordinate (c1) at (0,4);
                    \coordinate (c2) at (0,2);
                    \coordinate (c3) at (0,0);
                    \coordinate (x1) at (4,3);
                    \coordinate (x2) at (4,1);
                    \coordinate (x3) at (-4,2);
                    \draw [fill, black] (c1) circle (0.1cm);
                    \draw [fill, black] (c2) circle (0.1cm);
                    \draw [fill, black] (c3) circle (0.1cm);
                    \draw [fill, black] (x1) circle (0.1cm);
                    \draw [fill, black] (x2) circle (0.1cm);
                     \draw [fill, black] (x3) circle (0.1cm);
                    \node[left] at (c1) {$\supply_1$};
                    \node[left] at (c2) {$\supply_2$};
                    \node[left] at (c3) {$\supply_3$};
                    \node[right] at (x1) {$\demand_1$};
                    \node[right] at (x2) {$\demand_2$};
                     \node[left] at (x3) {$\demand_3$};
                    \draw[ultra thick] (c1)--(x1);
                    \draw[ultra thick] (c2)--(x1);
                    \draw[ultra thick] (c2)--(x2);

		\draw (c3)--(x2);
                     \draw[ultra thick] (c3)--(x3);
            \end{tikzpicture} 
\hspace{2em} 
            \begin{tikzpicture}[scale=0.35]
                    \coordinate (c1) at (0,4);
                    \coordinate (c2) at (0,2);
                    \coordinate (c3) at (0,0);
                    \coordinate (x1) at (4,3);
                    \coordinate (x2) at (4,1);
                    \coordinate (x3) at (-4,2);
                    \draw [fill, black] (c1) circle (0.1cm);
                    \draw [fill, black] (c2) circle (0.1cm);
                    \draw [fill, black] (c3) circle (0.1cm);
                    \draw [fill, black] (x1) circle (0.1cm);
                    \draw [fill, black] (x2) circle (0.1cm);
                     \draw [fill, black] (x3) circle (0.1cm);
                    \node[left] at (c1) {$\supply_1$};
                    \node[left] at (c2) {$\supply_2$};
                    \node[left] at (c3) {$\supply_3$};
                    \node[right] at (x1) {$\demand_1$};
                    \node[right] at (x2) {$\demand_2$};
                     \node[left] at (x3) {$\demand_3$};
                    \draw[ultra thick] (c1)--(x1);
                    \draw[ultra thick] (c2)--(x1);
                    \draw[ultra thick] (c2)--(x2);
		\draw (c1)--(x3);
                     \draw[ultra thick] (c3)--(x3);
            \end{tikzpicture}    
       \end{figure}
In the first two figures we have a configuration of type case 3b. In the last figure we have a configuration that is of type case 3a, but our $\demand_2$ stayed the same and thus is still a mixed demand.
\bigskip

\noindent \textbf{Case 3b.\qquad }
        \begin{figure}[H]
        \centering
            \begin{tikzpicture}[scale=0.35]
                    \coordinate (c1) at (0,4);
                    \coordinate (c2) at (0,2);
                    \coordinate (c3) at (0,0);
                    \coordinate (x1) at (4,3);
                    \coordinate (x2) at (4,1);
                    \draw [fill, black] (c1) circle (0.1cm);
                    \draw [fill, black] (c2) circle (0.1cm);
                    \draw [fill, black] (c3) circle (0.1cm);
                    \draw [fill, black] (x1) circle (0.1cm);
                    \draw [fill, black] (x2) circle (0.1cm);
                    \node[left] at (c1) {$\supply_1$};
                    \node[left] at (c2) {$\supply_2$};
                    \node[left] at (c3) {$\supply_3$};
                    \node[right] at (x1) {$\demand_1$};
                    \node[right] at (x2) {$\demand_2$};
                    \draw[ultra thick] (c1)--(x1);
                    \draw[ultra thick] (c2)--(x1);
                    \draw (c2)--(x2);
                    \draw[ultra thick] (c3)--(x2);
            \end{tikzpicture}
       \end{figure}

\noindent We do the following.
\begin{enumerate}
    \item If $\demand_2$ is a mixed demand in $F$, we mark $\{\supply_2,\demand_2\}$ or insert and mark $\{\supply_1,\demand_2\}$ (this pivot deletes $\{\supply_2,\demand_2\}$, as otherwise there would be an edge incident to $\supply_2$ to delete, but no edge to increase). Note we can do these markings since $\supply_1$ and $\supply_2$ necessarily have all their leaf edges. 

    \item Else $\demand_2$ is a leaf demand in $F$. Note there are no edges $\{\supply_3,\demand_3\}$ incident to $\supply_3$ to delete, as $\demand_3$ would be a leaf node in $F$, but $\supply_1$ and $\supply_2$ already have all their leaf edges. As there are no more edges incident to $\supply_3$ to delete, we must have some leaf edge $\{\supply_3,\demand_3\}\in O$ to decrease which is mixed in $F$. 

We insert the other mixed edge $\{\supply_1,\demand_3\}$ or $\{\supply_2,\demand_3\}$ and mark it.
        \begin{figure}[H]
        \centering
                \subfloat[$\{\supply_1,\demand_3\}\in F$]{
            \begin{tikzpicture}[scale=0.35]
                    \coordinate (c1) at (0,4);
                    \coordinate (c2) at (0,2);
                    \coordinate (c3) at (0,0);
                    \coordinate (x1) at (4,3);
                    \coordinate (x2) at (4,1);
                    \coordinate (x3) at (-4,2);
                    \draw [fill, black] (c1) circle (0.1cm);
                    \draw [fill, black] (c2) circle (0.1cm);
                    \draw [fill, black] (c3) circle (0.1cm);
                    \draw [fill, black] (x1) circle (0.1cm);
                    \draw [fill, black] (x2) circle (0.1cm);
                    \draw [fill, black] (x3) circle (0.1cm);
                    \node[left] at (c1) {$\supply_1$};
                    \node[left] at (c2) {$\supply_2$};
                    \node[left] at (c3) {$\supply_3$};
                    \node[right] at (x1) {$\demand_1$};
                    \node[right] at (x2) {$\demand_2$};
                     \node[left] at (x3) {$\demand_3$};
                    \draw[ultra thick] (c1)--(x1);
                    \draw[ultra thick] (c2)--(x1);
                    \draw (c2)--(x2);
                    \draw[ultra thick] (c3)--(x2);
                    \draw[dashed] (c1)--(x3);
                    \draw (c3)--(x3);
		\draw[ultra thick, dashed] (c3)--(x3);
            \end{tikzpicture}
               }
               \hspace{2em}
               \subfloat[$\{\supply_3,\demand_2\}\in F$]{
            \begin{tikzpicture}[scale=0.35]
                    \coordinate (c1) at (0,4);
                    \coordinate (c2) at (0,2);
                    \coordinate (c3) at (0,0);
                    \coordinate (x1) at (4,3);
                    \coordinate (x2) at (4,1);
                    \coordinate (x3) at (-4,2);
                    \draw [fill, black] (c1) circle (0.1cm);
                    \draw [fill, black] (c2) circle (0.1cm);
                    \draw [fill, black] (c3) circle (0.1cm);
                    \draw [fill, black] (x1) circle (0.1cm);
                    \draw [fill, black] (x2) circle (0.1cm);
                    \draw [fill, black] (x3) circle (0.1cm);
                    \node[above] at (c1) {$\supply_1$};
                    \node[above] at (c2) {$\supply_2$};
                    \node[above] at (c3) {$\supply_3$};
                    \node[right] at (x1) {$\demand_1$};
                    \node[right] at (x2) {$\demand_2$};
                    \node[left] at (x3) {$\demand_3$};
                    \draw[ultra thick] (c1)--(x1);
                    \draw[ultra thick] (c2)--(x1);
                    \draw (c2)--(x2);
                    \draw[ultra thick] (c3)--(x2);
                    \draw (c3)--(x3);
		\draw[dashed, ultra thick] (c3)--(x3);
                    \draw[dashed] (c2)--(x3);
            \end{tikzpicture}
                    }
       \end{figure}
\noindent If $\{\supply_3,\demand_3\}$ is marked in $O$, then this is the last edge to insert ($|O\setminus F|=1$) so that no marked edges are deleted in the pivot and we end up in $F$. Otherwise  $\{\supply_3,\demand_3\}$ is unmarked. Then if we are inserting $\{\supply_2,\demand_3\}$ clearly no marked edges are deleted. If instead we are inserting $\{\supply_1,\demand_3\}$, we know that $\{\supply_1,\demand_1\}$ is not deleted as otherwise $\supply_2$ would have two edges to decrease, but none to increase.
\end{enumerate}
\bigskip

\noindent \textbf{4 marked edges}
       \begin{figure}[H]
        \centering
            \begin{tikzpicture}[scale=0.35]
                    \coordinate (c1) at (0,4);
                    \coordinate (c2) at (0,2);
                    \coordinate (c3) at (0,0);
                    \coordinate (x1) at (4,3);
                    \coordinate (x2) at (4,1);
                    \draw [fill, black] (c1) circle (0.1cm);
                    \draw [fill, black] (c2) circle (0.1cm);
                    \draw [fill, black] (c3) circle (0.1cm);
                    \draw [fill, black] (x1) circle (0.1cm);
                    \draw [fill, black] (x2) circle (0.1cm);
                    \node[left] at (c1) {$\supply_1$};
                    \node[left] at (c2) {$\supply_2$};
                    \node[left] at (c3) {$\supply_3$};
                    \node[right] at (x1) {$\demand_1$};
                    \node[right] at (x2) {$\demand_2$};
                    \draw[ultra thick] (c1)--(x1);
                    \draw[ultra thick] (c2)--(x1);
                    \draw[ultra thick] (c2)--(x2);
                    \draw[ultra thick] (c3)--(x2);
            \end{tikzpicture}
       \end{figure}
\noindent By assumption, all $\supply_i$ have all their leaf edges and $E_m^O=E_m^F$. Hence $O=F$.\\\\
\bigskip
This concludes the proof of Theorem \ref{thm:3n}.

\section*{Acknowledgements} 

Borgwardt gratefully acknowledges support from the Alexander von Humboldt Foundation, De Loera and Miller gratefully acknowledge support from UC MEXUS grant, and Finhold gratefully acknowledges support from the graduate program TopMath of the Elite Network of Bavaria and the TopMath Graduate Center of TUM Graduate School at Technische Universit\"at M\"unchen.

\bibliographystyle{plain}

\end{document}